\documentclass[10pt, a4paper,leqno]{amsart}
\setlength{\textwidth}{16.2cm}
\setlength{\textheight}{23cm}
\usepackage[utf8]{inputenc}
\calclayout  
\usepackage{calligra} 
\usepackage[T1]{fontenc} 
\usepackage{color}
\usepackage[english]{babel}
\usepackage[mathscr]{euscript}
\usepackage{amsthm}
\usepackage{thmtools}
\usepackage{stmaryrd}
\usepackage[utf8]{inputenc}
\usepackage[all]{xy}
\usepackage{indentfirst}
\usepackage{hyperref}
\usepackage{amsmath}
\usepackage{amsmath,amsfonts}
\usepackage{lipsum}
\usepackage[toc,page]{appendix}
\usepackage[pdftex]{graphicx}
\usepackage{tikz-cd}
\usepackage{tikz-3dplot}
\usepackage{amsthm}
\usepackage{stmaryrd}
\usepackage{ragged2e}
\usepackage{blindtext}
\usepackage{url}
\usepackage{booktabs} 
\usepackage{array} 
\usepackage{paralist} 
\usepackage{verbatim} 
\usepackage{subfig}
\usepackage{graphicx}			
\usepackage{enumitem}

\theoremstyle{plain}
\newtheorem{Theorem}{Theorem}[section]
\newtheorem{Lemma}[Theorem]{Lemma}
\newtheorem{Proposition}[Theorem]{Proposition}
\newtheorem{Definition}[Theorem]{Definition}
\newtheorem{Remark}[Theorem]{Remark}
\newtheorem{Question}[Theorem]{Question}
\newtheorem{Corollary}[Theorem]{Corollary}
\newtheoremstyle{example}
  {3pt} 
  {3pt} 
  {} 
  {} 
  {\bfseries} 
  {.} 
  {.5em} 
  {} 

\theoremstyle{example}
\newtheorem{Example}[Theorem]{Example}


\newtheoremstyle{diagramstyle}
  {3pt} 
  {3pt} 
  {\normalfont} 
  {} 
  {\bfseries} 
  {.} 
  { } 
  {} 

\theoremstyle{diagramstyle}


\DeclareMathOperator{\Ass}{Ass}

\DeclareMathOperator{\beg}{indeg}

\DeclareMathOperator{\coker}{Coker}

\DeclareMathOperator{\cohdim}{cohdim}

\DeclareMathOperator{\depth}{depth}
\DeclareMathOperator{\E}{E}

\DeclareMathOperator{\End}{end}
\DeclareMathOperator{\Ext}{Ext}
\DeclareMathOperator{\Fitt}{Fitt}
\DeclareMathOperator{\G}{G}

\DeclareMathOperator{\grade}{grade}

\DeclareMathOperator{\HF}{HF}

\DeclareMathOperator{\Ht}{ht}

\DeclareMathOperator{\Id}{Id}
\DeclareMathOperator{\im}{Im}

\DeclareMathOperator{\Kitt}{Kitt}

\DeclareMathOperator{\pdim}{pdim}

\DeclareMathOperator{\Reg}{reg}

\DeclareMathOperator{\SD}{SD}
\DeclareMathOperator{\SDC}{SDC}

\DeclareMathOperator{\Spec}{Spec}
\DeclareMathOperator{\Supp}{Supp}

\DeclareMathOperator{\Tot}{Tot}

\DeclareMathOperator{\V}{V}

\newcommand{\binomial}[2]{{#1 \choose #2}}
\newcommand{\bt}{{\textbf{t}}}

\newcommand{\fa}{\mathfrak{a}}

\newcommand{\ff}{\mathbf{f}}

\newcommand{\fm}{\mathfrak{m}}

\newcommand{\fp}{\frak{p}}
\newcommand{\fq}{\frak{q}}

\newcommand{\lra}{\longrightarrow}
\newcommand{\pp}{\mathbb{P}}
\newcommand{\ra}{\rightarrow}

\newcommand{\xra}{\xrightarrow}
\newcommand{\xx}{{\textbf{x}}}
\makeatletter
\@namedef{subjclassname@2020}{\textup{2020} Mathematics Subject Classification}
\makeatother

\begin{document}

\title{Set-theoretically perfect ideals and residual intersections}
\author{S. Hamid Hassanzadeh}
\dedicatory{Dedicated to Marc Chardin on the occasion of his 65th birthday}
\email{hamid@im.ufrj.br}
\thanks{The author is partially supported by  (CAPES-Brasil)-Finance Code 001,  grants: CAPES-PrInt Project 88881.311616 and CNPq-Brasil grant number: universal 406377/2021-9.}
\address{Departamento de Matem\'atica, Centro de Tecnologia, Cidade Universit\'aria da Universidade Federal do Rio de Janeiro, 21941-909 Rio de Janeiro, RJ, Brazil}
\date{\today}

\subjclass[2020]{13D02,13C40,13D45,13H15}
\maketitle
\begin{abstract} This paper studies algebraic residual intersections in rings with Serre's condition \( S_{s} \). It demonstrates that  a wide class of residual intersections  is set theoretically perfect.  This fact leads to determining a uniform upper bound for the multiplicity of residual intersections. In positive characteristic, it follows that residual intersections are cohomologically complete intersection and, hence, their variety is connected in codimension one.
\end{abstract}

\section{Introduction}
Residual intersections have a long history in algebraic geometry, tracing back to Cayley-Bacharach theory or at least to the mid-nineteenth century with Chasles \cite{Ch}, who counted the number of conics tangent to a given conic (see Eisenbud's talk \cite{EisT} or Kleiman \cite{K} for a historical introduction). The theory became part of commutative algebra through the work of Artin and Nagata \cite{artin1972residual} where they defined {\it Algebraic Residual Intersections} to study the double point locus of maps between schemes of finite type over a field. The theory of algebraic residual intersection developed alongside the theory of algebraic linkage, with residual intersections being considered a broad generalization of linked ideals. It has advanced significantly thanks to the contributions of C. Huneke, B. Ulrich, A. Kustin, M. Chardin, and D. Eisenbud, among others.

As Eisenbud once said, besides all the miracles that happen in this theory, there are always a bunch of conditions that need to be verified for the theorems to work.
Such conditions have appeared since the work of Artin and Nagata, where they imposed that the ideals have locally few generators-- $G_{s}$-condition. Other conditions have also emerged, such as Strongly Cohen-Macaulay \cite{huneke1983strongly}, Sliding Depth \cite{Slidingdepth}, Sliding depth on powers of ideals \cite{UlrichArtin}, and partial sliding depth \cite{CHU,hassanzadeh2012cohen}.

Cohen-Macaulayness has always been a central property for residual intersections since the work of Peskine-Szpiro \cite{PS}. This is a reason why all of the above conditions appear all the time.

In this paper, we study residual intersections of ideals that do not possess any special homological properties. However, their number of generators is at most the height of the desired residual intersection and it does not decrease after localization. These types of ideals are called \( r \)-minimally generated ideals. More specifically, \( r \)-minimally generated ideals are situated on the opposite side of ideals with the \( G_{s} \) condition. While the \( G_{s} \) condition forces the local number of generators of an ideal to be at most the dimension of the ring, \( r \)-minimally generated ideals maintain the same minimal number of generators, \( r \), from a certain codimension onward.

Although  the residual intersections of \( r \)-minimally generated ideals is not necessarily Cohen-Macaulay, they are locally ``set theoretically perfect'' when the base ring is Cohen-Macaulay.  

That in regular rings, every module admits a finite free resolution,  is crucial in the development of the theory of multiplicity initiated by Hilbert.  However, when a variety is embedded into another variety, we often lose the finiteness of the free resolution of the defining ideal. To overcome this problem, many advanced techniques have been developed in commutative algebra. Some instances are {\it rationality of Poincare series} or invariants such as {\it Castelnuovo-Mumford regularity and rate}.

 The concept of  ``free approach'', \autoref{DFA},  is another attempt to overcome the deficiency of the homological methods to study algebraic sets inside an arbitrary variety. 
 A leitmotiv of this paper is to 
 \begin{center}
 {\it Determine ideals in a Noetherian ring which admit a free approach. }
 \end{center}
 
 The first examples of such ideals are {\it set-theoretic complete intersection} ideals. {\it  Equimultiple ideals} are instances of set-theoretic complete intersection. It is worth mentioning that, Hartshorne's question which inquires whether every irreducible curve in $\mathbb{P}^{3}$ is a set-theoretic complete intersection is still open in characteristic zero, \cite[Exercise 2.17]{Hartshorne}.  
 The idea of  free approach suggests the following strategy to provide an affirmative answer (if one exists) to Hartshorne's problem:

\begin{Proposition}\label{Hartshorne}  Every closed irreducible curve in $\mathbb{P}^{3}$ is a set-theoretic intersection of two surfaces if and only if in $R=k[x_{0},x_{1},x_{2},x_{3}]$
\begin{itemize}
\item{For every homogenous ideal  $I$ which is Cohen-Macaulay of codimension $2$  and its radical is prime, there exist $f,g\in R$ such that ${\sqrt I}=\sqrt {(f,g)}$, and}
\item{Every codimension two prime ideal in $R$ admits a free approach.}
\end{itemize}
\end{Proposition}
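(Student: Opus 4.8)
The plan is to unwind the definitions and read both bullets through the dictionary between closed irreducible curves $C\subseteq\mathbb{P}^{3}$ and homogeneous primes $\mathfrak{p}\subseteq R=k[x_{0},x_{1},x_{2},x_{3}]$ of height two, under which $C=V_{+}(\mathfrak{p})$ and ``$C$ is a set-theoretic intersection of two surfaces'' becomes $\mathfrak{p}=\sqrt{(f,g)}$ with $f,g$ forms. Here $\htt\mathfrak{p}=2<4=\htt(x_{0},\dots,x_{3})$ forces $V_{+}(\mathfrak{p})\neq\emptyset$, $\dim R/\mathfrak{p}=2$ makes it a genuine curve, and every irreducible curve arises so; the equations in the first bullet are read as forms, the natural choice since $I$ there is homogeneous. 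Throughout, free approach is taken in its graded form (\autoref{DFA}): a free approach of a homogeneous ideal is a homogeneous perfect subideal with the same radical.

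For the forward direction, assume every closed irreducible curve in $\mathbb{P}^{3}$ is a set-theoretic complete intersection. A homogeneous height-two prime $\mathfrak{p}$ is the ideal of such a curve, so $\mathfrak{p}=\sqrt{(f,g)}$ for forms $f,g$; since $R$ is Cohen-Macaulay and $(f,g)$ has height two, $\grade(f,g)=\htt(f,g)=2$, so $f,g$ is a regular sequence, $(f,g)$ is perfect, $(f,g)\subseteq\mathfrak{p}$, and $\sqrt{(f,g)}=\sqrt{\mathfrak{p}}$; hence $(f,g)$ is a free approach of $\mathfrak{p}$, which is the second bullet. For the first, if $I$ is homogeneous, Cohen-Macaulay of codimension two, with $\sqrt{I}=\mathfrak{p}$ prime, then $\mathfrak{p}$ is again the ideal of a curve, so $\mathfrak{p}=\sqrt{(f,g)}$ and therefore $\sqrt{I}=\sqrt{(f,g)}$.

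For the converse, assume both bullets and let $C\subseteq\mathbb{P}^{3}$ be a closed irreducible curve with homogeneous prime ideal $\mathfrak{p}$ of height two. The second bullet provides a graded free approach $J$ of $\mathfrak{p}$: a homogeneous perfect ideal with $J\subseteq\mathfrak{p}$ and $\sqrt{J}=\mathfrak{p}$. As $R$ is Cohen-Macaulay and $\sqrt{J}=\mathfrak{p}$ has height two, $\grade J=\htt J=2$, and since $J$ is perfect over the Cohen-Macaulay ring $R$ the quotient $R/J$ is Cohen-Macaulay of codimension $\grade J=2$, with prime radical $\mathfrak{p}$. Thus $J$ is precisely an ideal of the kind occurring in the first bullet, so there are forms $f,g$ with $\sqrt{(f,g)}=\sqrt{J}=\mathfrak{p}$, and $C=V_{+}(\mathfrak{p})=V_{+}(f)\cap V_{+}(g)$ is a set-theoretic intersection of two surfaces.

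The argument is essentially a translation, so the one genuinely delicate step is the interaction with the grading: one must be sure that the free approach supplied by the second bullet for a homogeneous prime can be chosen homogeneous, so that it is a legitimate input to the first bullet --- this is exactly why ``free approach'' is understood in its graded version in \autoref{DFA}. The remaining points --- that two forms of height two in the regular ring $R$ form a regular sequence and hence generate a perfect ideal, and that a perfect ideal over a Cohen-Macaulay ring has a Cohen-Macaulay quotient of the expected codimension --- are standard.
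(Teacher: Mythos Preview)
The paper states this proposition in the introduction as a motivational device and does \emph{not} supply a proof, so there is no argument in the paper to compare against. Your proof is the natural one and is correct: the forward direction is immediate once one notes that two forms generating a height-two ideal in the regular ring $R$ form a regular sequence (so their Koszul complex is a free approach), and the converse hinges on the observation that the ideal $\tau$ produced by a free approach of $\mathfrak{p}$ is perfect of grade $2$ over the Cohen-Macaulay ring $R$, hence $R/\tau$ is Cohen-Macaulay of codimension $2$ and feeds directly into the first bullet.

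The one point you flag yourself---that the free approach of a \emph{homogeneous} prime must be taken homogeneous so that $\tau$ is a legitimate input to the first bullet---is genuine. \autoref{DFA} as written is not graded, so strictly speaking one needs either to read the definition in its graded form (as you do) or to argue that a homogeneous ideal admitting an ungraded free approach also admits a graded one. In the context of the paper this is harmless: the free-approach complexes $F_\bullet$ constructed in \autoref{ResidualSection} are graded whenever the data are, and the intended reading is clearly the graded one. Your handling of this is appropriate.
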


 We hope the idea of free approach helps to find a homological method to search set theretically  complete intersections.

 The resume of the  main results of this paper is the following.
 \begin{Theorem} Let $R$ be a Noetherian ring that satisfies the Serre's condition S$_{s}$, $I$ an $r$-generated  ideal of $R$,  $\fa$ an $s$-generated sub-ideal of $I$ and $J=\fa:I$. Assume $I$ contains a regular element and $\Ht(J)\geq s$. If any of the following conditions holds then $J$ admits a free approach, i.e. there is an ideal $\tau\subseteq J$ which has finite projective dimension and ${\sqrt \tau}={\sqrt J}$.
 \begin{itemize}
 \item $s\geq r$ and $I_{\fp}$ is minimally generated by $r$ elements for any prime $\fp\supseteq I$ of height at least  $s-1$ ; or 
 \item $R$ is local with infinite residue field, $I$ is generated by a proper sequence and $\pdim(Z_{i})\leq r-i-1$ for all $i\geq 1$, where $Z_{i}$ is the $i$th Koszul cycle of the generators of $I$.
 \end{itemize} 
 \end{Theorem}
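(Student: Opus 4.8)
The strategy is to realise $\tau$ as a \emph{disguised residual intersection} of the data, that is, as the ideal read off the zeroth homology of a finite complex of Koszul cycles. Fix generators $\underline f=f_1,\dots ,f_r$ of $I$; in the second case first replace the given generators of $\fa$ by $s$ sufficiently general $R$-linear combinations of $\underline f$ (permissible because the residue field is infinite), so that $\underline f$ is still a proper sequence. Write the chosen generators of $\fa$ as $a_j=\sum_{i=1}^{r}\gamma_{ij}f_i$, and let $\gamma=(\gamma_{ij})$ be the resulting $r\times s$ matrix. To this data one attaches a complex
\[
\mathcal C_\bullet:\qquad 0\to\mathcal C_s\to\cdots\to\mathcal C_1\xrightarrow{\,d_1\,}\mathcal C_0=R,
\]
of length $s$ whose terms in positive degrees are built from free modules and from the Koszul cycles $Z_i=Z_i(\underline f)$; put $\tau:=\im(d_1)$, so $R/\tau=H_0(\mathcal C_\bullet)$. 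The construction supplies the standing facts: $\fa+I_r(\gamma)\subseteq\tau\subseteq J$ (so $\tau\subseteq J$ is for free); $\tau_\fp=J_\fp$ for every $\fp$ with $\Ht\fp\le s$ (in particular $\Ht\tau\ge s$); and, for every $\fp\not\supseteq I$, the localisation $(\mathcal C_\bullet)_\fp$ is split exact in positive degrees with $\tau_\fp=\fa_\fp=J_\fp$. The hypothesis that $I$ contains a regular element enters here: it guarantees $\grade I\ge 1$, which keeps this machinery in force.

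The first task is to show $\pdim_R(R/\tau)<\infty$; I would in fact obtain $\pdim_R(R/\tau)\le s$ by proving that $\mathcal C_\bullet$ is acyclic and that every $\mathcal C_i$ has finite projective dimension. For the finiteness of $\pdim\mathcal C_i$: in the second case this is precisely the hypothesis $\pdim Z_i\le r-i-1$, while in the first case the inequality $s\ge r$ together with $r$-minimal generation of $I$ allows one to replace, within the range of codimensions visible to a length-$s$ complex, each cycle module $Z_i$ by a free module, so that $\mathcal C_\bullet$ becomes effectively a complex of free modules. For acyclicity: in the second case it follows from the acyclicity criterion for approximation complexes, since $\underline f$ is a proper sequence; in the first case it follows from the Peskine--Szpiro acyclicity lemma applied prime by prime, using Serre's condition $S_s$ on $R$, the bound $\Ht J\ge s$, and the depth of the $Z_i$ forced by $r$-minimal generation at primes of height $\ge s-1$.

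It remains to prove $\sqrt\tau=\sqrt J$. As $\tau\subseteq J$, it is enough to check $\tau_\fp=R_\fp$ for every prime $\fp$ with $\fp\not\supseteq J$. If $\Ht\fp\le s$ this is immediate from $\tau_\fp=J_\fp$, and if $\fp\not\supseteq I$ it was noted above. So assume $\fp\supseteq I$, $\Ht\fp>s$, and $\fp\not\supseteq J$; then $(\fa:I)_\fp=R_\fp$, i.e. $I_\fp\subseteq\fa_\fp$, so $\fa_\fp=I_\fp$. In the first case, writing $\phi:R^r\to R$ with $e_i\mapsto f_i$ (so $\im\phi=I$, $\ker\phi=Z_1(\underline f)$ and $\im(\phi\gamma)=\fa$), the equality $\fa_\fp=I_\fp$ gives $R_\fp^{\,r}=\im\gamma_\fp+Z_1(\underline f)_\fp$; since $\Ht\fp>s\ge s-1$, the ideal $I_\fp$ is minimally generated by $r$ elements, whence $Z_1(\underline f)_\fp\subseteq\fp R_\fp^{\,r}$, so $\gamma_\fp\otimes k(\fp)$ is surjective and therefore $I_r(\gamma)_\fp=R_\fp\subseteq\tau_\fp$. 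In the second case, the genericity of $\underline a$ together with $\fa_\fp=I_\fp$ makes $(\mathcal C_\bullet)_\fp$ split exact, so again $\tau_\fp=R_\fp$. Thus $V(\tau)=V(J)$, i.e. $\sqrt\tau=\sqrt J$, and the proof is complete.

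The genuinely hard part is the second paragraph: proving that $\mathcal C_\bullet$ is acyclic under each of the two hypotheses --- that is, verifying at the relevant primes the depth inequalities on the terms $\mathcal C_i$ demanded by the acyclicity lemma. It is exactly here that $r$-minimal generation (first case), and the proper-sequence condition together with the bounds $\pdim Z_i\le r-i-1$ (second case), are indispensable; the passage from acyclicity to finite projective dimension and the local computations of the last paragraph are then comparatively routine.
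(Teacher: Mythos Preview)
Your overall strategy is right, and the Nakayama argument in your last paragraph (if $\fa_\fp=I_\fp$ and $\mu(I_\fp)=r$ then $\gamma_\fp\otimes k(\fp)$ is surjective, so $I_r(\gamma)_\fp=R_\fp$) is exactly the key local computation. But there is a genuine gap in how you obtain a complex with terms of finite projective dimension in the first case. You take a single $\mathcal{C}_\bullet$ whose positive terms involve the Koszul cycles $Z_i$, and assert that $r$-minimal generation lets one ``replace each cycle module $Z_i$ by a free module.'' This is unjustified: $r$-minimal generation says nothing about $\pdim Z_i$. One of the paper's own examples takes $R=\mathbb{Q}[x_0,\dots,x_5]/(x_0^2+x_1^2)$ and $I=(x_0,x_1,x_2{+}x_3{+}x_4{+}x_5)$; here $I$ is $3$-minimally generated, $\pdim_R(R/I)=\infty$, and the theorem still applies. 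The paper's device is a \emph{different} construction: over $S=R[t_1,\dots,t_r]$ one builds an auxiliary free $S$-complex $G_\bullet$ (a totalised Koszul double complex tensored with $K_\bullet(\gamma;S)$) and sets $F_i:=H^r_{\bt}(G_{r+i})_{[0]}$. These $F_i$ are finite free $R$-modules \emph{by construction}, with no hypothesis on the $Z_i$; the augmentation $d_1\colon F_1\to R$ is produced as a spectral-sequence transgression. For the second bullet the same pattern is run with the Koszul double complex replaced by the minimal $S$-free resolution of $\mathcal{S}_I$, which exists of length $\le r-1$ precisely by the proper-sequence hypothesis together with $\pdim Z_i\le r-i-1$. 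One complex does not serve both cases.

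Several of your ``standing facts'' are also not standing. The assertion $\tau_\fp=J_\fp$ for all $\fp$ with $\Ht\fp\le s$ is false: at a height-$s$ prime containing $J$ the inclusion $\tau_\fp\subseteq J_\fp$ is typically strict (the paper later identifies $\tau=\fa+I_r(\gamma)$). What one can prove, and what the paper proves as a lemma, is only that $\tau_\fp=R_\fp$ whenever $I_\fp=\fa_\fp$ \emph{and} either $I_\fp=(1)$ or ($\mu(I_\fp)=r$ and $s\ge r$); this uses the hypotheses, it is not a byproduct of the construction. And your second-bullet move of replacing the generators of $\fa$ by general $R$-linear combinations of $\underline f$ is confused: general combinations of $\underline f$ generate $I$, not $\fa$, and genericity at the closed point of a local ring does not propagate to other primes, so it cannot force split exactness of $(\mathcal{C}_\bullet)_\fp$ at an arbitrary $\fp$ with $\fa_\fp=I_\fp$. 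The paper's mechanism there is that the homologies of the auxiliary complex are automatically $\mathcal{L}$-torsion (with $\mathcal{L}$ the defining ideal of $\mathcal{S}_I$), and when $I_\fp=\fa_\fp$ one has $(\gamma)+\mathcal{L}=(\bt)$ locally, which forces the degree-zero strand to be exact.
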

As a consequence of the first part of the above theorem, we show in \autoref{ericci} that the multiplicity of the residual intersection of a complete intersection with the same degrees serves as a uniform upper bound for the multiplicity of residual intersections of \( r \)-minimally generated ideals. This advances our understanding of the multiplicity of residual intersections, as explained in \cite{CHU}.

A consequence of the second part is \autoref{regularring}: Let \( R \) be a regular local ring with infinite residue field and the ideal  \( I \) satisfies  \(\SD\) and G$_s^-$. Then any algebraic residual intersection of \( I \) admits a free approach. Moreover, any arithmetic \( s \)-residual intersection of \( I \) is Cohen-Macaulay. This advances the answer to  the last unsolved part of the question posed by Huneke and Ulrich \cite[Question 5.7]{huneke1988residual}:
\begin{center}
 \textit{
 Whether in a Cohen-Macaulay local ring, the algebraic residual intersections of ideals with sliding depth are Cohen-Macaulay?
 }
 \end{center}

\section{Free approach and $r$-minimally generated ideals}
The rings are assumed to be commutative with identity and of finite Krull dimension.
\subsection{free approach}

 \begin{Definition}\label{DFA} Let $R$ be a ring and $J$ an ideal of $R$. A {\bf free approach} to $J$ is an acyclic  bounded complex of finitely generated free $R$-modules, $F_{\bullet}: 0\ra F_{s}\ra\ldots \ra F_{1}\ra F_{0}\to 0$,  such that $s=\Ht(J)$, $F_{0}=R$, $H_{0}(F_{\bullet})= R/\tau$, $\tau\subseteq J$ and $\sqrt\tau=\sqrt J.$ 
 \end{Definition}
 
 When $R$ is a Cohen-Macaulay local ring, the ideal $\tau$ in the above definition is a perfect ideal, meaning it is a Cohen-Macaulay ideal with finite projective dimension. Consequently, one can describe $J$ in \autoref{DFA} as set-theoretically Cohen-Macaulay or set-theoretically perfect. The notion of set-theoretically Cohen-Macaulay ideals in a regular local ring was first introduced in \cite[page 599]{EisMustStillman}. Singh and Walter in \cite{SinghWalter} show that for a smooth elliptic curve $E_{\mathbb{Q}}\subset \pp^2_{\mathbb{Q}}$, the defining ideal of
$E_{\mathbb{Q}}\times \pp^1_{\mathbb{Q}}\subset \pp^5_{\mathbb{Q}}$ is not set-theoretically Cohen-Macaulay.
 
 We observe that when $R$ is not necessarily Cohen-Macaulay, any set-theoretically perfect ideal admits a free approach. However, the converse does not hold in general. 
 
When $J$ is generated by a regular sequence,  the Koszul complex is the free resolution of $R/J$. Thence, a free approach can be applied to any ideal whose radical is the same as the radical of $J$. That is, \textit{set-theoretic complete intersections} admit free approaches. In the case of characteristic $p>0$, this class contains all ideals of height $n-1$ in the ring $R=k[x_{1},\ldots,x_{n}]$ or $R=k[[x_{1},\ldots,x_{n}]]$, where $k$ is a perfect field \cite{CowiskNori}. A famous question posed by R. Hartshorne \cite[Exercise 2.17]{Hartshorne} asserts that ``every closed irreducible curve in $\mathbb{P}^{3}$ is a set-theoretic intersection of two surfaces.'' This problem is known to have an affirmative answer for several families of monomial curves in positive characteristics and some special rational monomial curves in characteristic zero, including Cohen-Macaulay ones \cite{Hartshornecharp, Ferrand, Thoma, RobbianoValla}. The problem remains widely open in characteristic zero and for non-monomial curves.


Unlike free resolutions, not every ideal in every ring admits a free approach. \autoref{PCM}  follows from the {\it new intersection theorem}   \cite{FoxbyII, HochsterTopics,RobertsIntersection}.  See also \cite[Corollary 9.4.2]{bruns1998cohen} for further details.
\begin{Proposition}\label{PCM} Let $R$ be a Noetherian ring. If  $J$ is an ideal of height $s$ that  admits a free approach then
\begin{enumerate}
\item[(i)] $\Ht(\fp)=s$ for any $\fp\in \min(J)$,
 \item[(ii)] $\depth(R_{\fp})\geq  s$  for any prime ideal $\fp\supseteq J$. 
 \item[(iii)] $\grade(J)=\Ht(J)=s$
\end{enumerate}
In particular, $R_{\fp}$ is Cohen-Macaulay for any $\fp\in \min(J)$.
\end{Proposition}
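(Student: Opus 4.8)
The plan is to extract from the free approach, after localizing at a prime containing $J$, a finitely generated module of finite projective dimension over a Noetherian \emph{local} ring, and to feed it into the New Intersection Theorem. The precise input I will use is the following consequence of that theorem (see \cite[Corollary 9.4.2]{bruns1998cohen}): if $(S,\fn)$ is Noetherian local and $M\ne 0$ is a finitely generated $S$-module of finite length with $\pdim_S M<\infty$, then $S$ is Cohen--Macaulay and $\pdim_S M=\dim S$. Indeed, a minimal finite free resolution of such an $M$ has homology $M$ concentrated in degree $0$, so the New Intersection Theorem yields $\pdim_S M\ge\dim S$, while the reverse inequality is the Auslander--Buchsbaum formula together with $\depth S\le\dim S$. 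Call this statement $(\dagger)$.

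For (i): fix $\fp\in\min(J)$; since $\sqrt\tau=\sqrt J$ we have $\fp\in\min(\tau)$, so $\fp R_\fp$ is the only prime of $R_\fp$ containing $\tau_\fp$ and hence $R_\fp/\tau_\fp$ is a nonzero $R_\fp$-module of finite length. Localizing $F_\bullet$ at $\fp$ (localization is exact) exhibits $R_\fp/\tau_\fp$ as a module with $\pdim_{R_\fp}(R_\fp/\tau_\fp)\le s$. By $(\dagger)$, $R_\fp$ is Cohen--Macaulay and $\Ht(\fp)=\dim R_\fp=\pdim_{R_\fp}(R_\fp/\tau_\fp)\le s$; since also $\Ht(\fp)\ge\Ht(J)=s$, we conclude $\Ht(\fp)=s$. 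This proves (i) and the ``in particular'' assertion.

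For (ii): I would first record the auxiliary lemma that for a Noetherian local ring $(S,\fn)$ and an ideal $\fb\subsetneq S$ with $\pdim_S(S/\fb)<\infty$ one has $\Ht(\fb)\le\pdim_S(S/\fb)$: choose $\fq\in\min(\fb)$ with $\Ht(\fq)=\Ht(\fb)$, localize at $\fq$ (so $S_\fq/\fb_\fq$ is nonzero of finite length and of finite projective dimension), and apply $(\dagger)$ to get $\Ht(\fb)=\Ht(\fq)=\dim S_\fq=\pdim_{S_\fq}(S_\fq/\fb_\fq)\le\pdim_S(S/\fb)$. Now let $\fp$ be any prime with $\fp\supseteq J$. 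Since $V(\tau)=V(J)$, the minimal primes of $\tau$ contained in $\fp$ are exactly the minimal primes of $J$ contained in $\fp$, each of height $s$ by (i); hence $\Ht(\tau R_\fp)=s$. Also $\pdim_{R_\fp}(R_\fp/\tau_\fp)\le s$ by localizing $F_\bullet$. Applying the auxiliary lemma with $S=R_\fp$ and $\fb=\tau R_\fp$ forces $\pdim_{R_\fp}(R_\fp/\tau_\fp)=s$, and Auslander--Buchsbaum then gives $\depth R_\fp=s+\depth_{R_\fp}(R_\fp/\tau_\fp)\ge s$.

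For (iii): one has $\grade(J)\le\Ht(J)=s$ in general, while $\grade(J,R)$ depends only on $\sqrt J$, so $\grade(J)=\grade(\tau)=\inf\{\depth R_\fp:\fp\in V(\tau)\}=\inf\{\depth R_\fp:\fp\in V(J)\}\ge s$ by (ii); hence $\grade(J)=\Ht(J)=s$. The only non-formal ingredient in all of this is the New Intersection Theorem; everything else is localization together with standard relations among height, depth, grade and projective dimension. The step that needs the most care is (ii): one must transport the height identity $\Ht(\tau R_\fp)=s$ correctly through the localization --- which is precisely where part (i) is used, applied to the minimal primes of $\tau$ (equivalently of $J$) lying below $\fp$ --- and then combine the auxiliary lemma with the bound $\pdim_{R_\fp}(R_\fp/\tau_\fp)\le s$ coming from the free approach.
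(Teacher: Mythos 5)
Your proof is correct and follows essentially the same route as the paper: localize the free approach at primes containing $J$, invoke the New Intersection Theorem (packaged as your statement $(\dagger)$) at minimal primes of $\tau$ to pin down heights and projective dimensions, then finish with Auslander--Buchsbaum and the characterization $\grade(J)=\inf\{\depth R_\fp:\fp\in\V(J)\}$. Your auxiliary lemma in part (ii) merely makes explicit the further localization at a minimal prime of $\tau_\fp$ that the paper compresses into the phrase ``another application of the new intersection theorem implies that $\pdim(R/\tau)_\fp=s$.''
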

\begin{proof} We keep the notation in \autoref{DFA}. Let $\fp\in \min(J)$. Since $\sqrt\tau=\sqrt J$, $\tau_{\fp}$ is a $\fp R_{\fp}$-primary ideal with finite free resoltion of length $s$. The new intersection theorem implies that $s\geq \dim(R_{\fp})$.  Since $\Ht(\fp)\geq \Ht(J)=s$ part $(i)$ follows.

$(ii)$. Let $\fp\supseteq J$ be a prime ideal. Then $(R/\tau)_{\fp}$ is a non-zero finitely generated module over Noetherian local ring $R_{\fp}$ of finite projective dimension $\leq s$. Since, by part (1), all minimal primes of $\tau_{\fp}$ have the same height $s$, another application of the new intersection theorem implies that $\pdim(R/\tau)_{\fp}=s$. By Auslander-Buchsbaum formula, $\pdim(R/\tau)_{\fp}+\depth (R/\tau)_{\fp}=\depth R_{\fp} $ that complete the proof of part (2).

$(iii)$.  According to $(i)$ and $(iii)$, for all prime ideal $\fp\supseteq J$, we have $\Ht(\fp)\geq s$ and $\depth(R_{\fp})\geq s$. On the other hand, $\grade(J)=\min\{\depth(R_{\fp}):\fp\in \V(J)\}$, \cite[1.2.10]{bruns1998cohen}. Hence $\grade(J)\geq s$. On the other hand $\grade(J)\leq \Ht(J)=s$.
\end{proof}

\autoref{PCM} provides a criterion to decide when an ideal does not admit a free approach. One might wonder if every ideal admits a free approach in a well-structured ring, such as a Cohen-Macaulay ring.  The next result shows that at least in positive characteristics only special ideals may admit a free approach. We recall two definitions.
\begin{Definition}\label{cohdim} Let $R$ be a Noetherian ring and $\fa$ an ideal of $R$. The {\bf cohomological dimension} of $\fa$ (with respect to $R$) is $$\cohdim(\fa):=\max\{i\in \mathbb{N}:H^{i}_{\fa}(R)\neq 0\}.$$
An ideal $\fa$ is called {\bf cohomologically complete  intersection} whenever $\cohdim(\fa)=\grade(\fa)$. 
\end{Definition}
The other definition is 
\begin{Definition}\label{c(R)} Let $R$ be a Noetherian ring. The {\bf connectedness dimension} of $R$ is the least integer $i$ such that by removing any variety of dimension $i-1$, $\Spec(R)$ maintains its connectivity. This number is denoted by $c(R)$. When we say that $R$ is connected in dimension $j$, it implies $j\leq c(R)$. We refer to \cite[Chapter 19]{localCohomology} for details. 
\end{Definition}
The idea of the proof of the following proposition comes from \cite[Proposition 4.1, page 386]{finiteprojectivepeskine} and also \cite[Theorem 8.2.6]{bruns1998cohen}.
\begin{Proposition}\label{Pcohdim} Let $R$ be a Noetherian ring with positive characteristic  and of dimension $d$.  Let $J$ be an ideal of $R$ of height $s$ which admits a free approach. Then 
\begin{itemize} 
\item[(i)]{$J$ is  cohomologically complete intersection.}
\item[(ii)]{If $\hat{R}$ is connected in dimension $d-c$, then $R/J$ is connected in dimension $\min\{d-s-c,d-s-1\}$.}
\end{itemize}
\end{Proposition}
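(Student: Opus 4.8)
The plan is to deduce both parts from the existence of the free approach $F_{\bullet}\colon 0\to F_s\to\cdots\to F_1\to F_0=R\to 0$ with $H_0(F_\bullet)=R/\tau$, $\sqrt\tau=\sqrt J$, combined with the Peskine--Szpiro intersection theorem in characteristic $p$ and the Hartshorne--Lichtenbaum / Faltings style connectedness results available in positive characteristic. For part (i), since local cohomology depends only on the radical, $H^i_J(R)\cong H^i_{\sqrt J}(R)\cong H^i_{\sqrt\tau}(R)\cong H^i_\tau(R)$, so it suffices to bound $\cohdim(\tau)$. Here I would invoke the argument of \cite[Proposition 4.1]{finiteprojectivepeskine}: an ideal generated (up to radical) by an ideal of finite projective dimension $s$ — more precisely, whose associated cyclic module $R/\tau$ has finite projective dimension $s$ — has $H^i_\tau(R)=0$ for $i>s$ in positive characteristic. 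The mechanism is that the Frobenius functor is exact on modules of finite projective dimension, so one computes $H^i_\tau(R)=\varinjlim \Ext^i_R(R/\tau^{[q]},R)$ along Frobenius powers and uses that each $R/\tau^{[q]}$ again has projective dimension $\le s$ (base change of a finite free resolution stays a finite free resolution), which forces $\Ext^i=0$ for $i>s$. Combined with $\grade(\tau)=\grade(J)=s$ from \autoref{PCM}(iii) and the general inequality $\grade\le\cohdim$, this gives $\cohdim(J)=s=\grade(J)$, i.e. $J$ is a cohomologically complete intersection.

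For part (ii), the plan is to pass to the completion $\hat R$ (local cohomology, cohomological dimension and the radical $\sqrt{J\hat R}=\sqrt{\tau\hat R}$ are all insensitive to completion, and a free approach base-changes to one over $\hat R$) and then apply a Faltings-type connectedness theorem: if $\hat R$ is connected in dimension $d-c$ and $\cohdim(\mathfrak b)\le t$, then $\V(\mathfrak b)$ is connected in dimension $\min\{d-t-c,\,d-t-1\}$ — this is the content of \cite[Chapter 19]{localCohomology}, building on the Mayer--Vietoris sequence relating $H^\bullet_{\mathfrak b}$, $H^\bullet_{\mathfrak b_1}$, $H^\bullet_{\mathfrak b_2}$ for a disconnection $\V(\mathfrak b)=\V(\mathfrak b_1)\cup\V(\mathfrak b_2)$. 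Since by part (i) we have $\cohdim(J)=s$, substituting $t=s$ yields connectedness of $\V(J)=\Spec(R/J)$ in dimension $\min\{d-s-c,\,d-s-1\}$ exactly as stated. One should double-check the normalization of "connected in dimension $j$" used in \autoref{c(R)} (the convention $j\le c(R)$) matches the convention in the cited connectedness theorem, adjusting the $-1$ shift if necessary.

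The main obstacle I anticipate is part (i): making the Frobenius argument work with $R/\tau$ of finite projective dimension rather than with a regular ambient ring. One must be careful that $R$ itself need not be regular or even Cohen--Macaulay away from $\V(J)$, so one cannot simply cite the classical Peskine--Szpiro statement; instead one needs the relative version which only uses that $\tau$ (equivalently $R/\tau$) has finite projective dimension and that Frobenius is flat enough on the relevant complexes. The cleanest route is: take the finite free resolution $F_\bullet$ of $R/\tau$, note its Frobenius twists $F^{(q)}_\bullet$ resolve $R/\tau^{[q]}$ (using that the homology of $F_\bullet$ in positive degrees vanishes and flatness of Frobenius on a bounded complex of frees is automatic), hence $\pdim(R/\tau^{[q]})\le s$ uniformly; then $H^i_\tau(R)=\varinjlim_q\Ext^i_R(R/\tau^{[q]},R)$ vanishes for $i>s$ because each term does. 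The remaining parts — reduction to $\tau$, reduction to $\hat R$, and bookkeeping of the connectedness dimension — are routine once the cohomological dimension bound is in hand.
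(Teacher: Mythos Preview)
Your proposal is correct and follows essentially the same route as the paper: reduce to $\tau$ via $\sqrt{\tau}=\sqrt{J}$, show $\pdim_R(R/\tau^{[p^e]})\le s$ for all $e$, compute $H^j_J(R)=\varinjlim_e \Ext^j_R(R/\tau^{[p^e]},R)$, and combine with $\grade(J)=s$ from \autoref{PCM}(iii) to get (i); then derive (ii) from (i) by a Faltings--type connectedness theorem. The paper cites \cite[Theorem~2.8]{DivTousi} for (ii), while you plan to cite \cite[Chapter~19]{localCohomology}; these are interchangeable.

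One point worth tightening: your phrase ``flatness of Frobenius on a bounded complex of frees is automatic'' is not right as stated---Frobenius is flat on $R$ only when $R$ is regular (Kunz). What you actually need, and what you correctly state earlier, is that the Frobenius functor preserves acyclicity of a finite free resolution. The paper makes this step explicit via the Buchsbaum--Eisenbud acyclicity criterion: since $F_\bullet$ is acyclic, $\grade(I_{r_i}(\phi_i))\ge i$ for all $i$; applying Frobenius replaces $I_{r_i}(\phi_i)$ by its bracket power $I_{r_i}(\phi_i)^{[p]}$, which has the same radical and hence the same grade, so Buchsbaum--Eisenbud gives acyclicity of $\mathcal{F}(F_\bullet)$ with $H_0=R/\tau^{[p]}$. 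This is the clean substitute for your appeal to ``flatness''.
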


\begin{proof}  Let    $$F_{\bullet}: 0\ra F_{s}\xrightarrow{\phi_{s}}\ldots \ra F_{1}\xrightarrow{\phi_{1}} F_{0}\to 0$$  be the free  approach of $J$ with $H_{0}(F_{\bullet})= R/\tau$, $\tau\subseteq J$ and $\sqrt\tau=\sqrt J.$

Since $F_{\bullet}$ is an acyclic complex of finite free modules, the Buchsbaum-Einsenbud criterion implies that $\grade(I_{r_{i}}(\phi_{i}))\geq i$ where
$r_i =\sum_{j=0}^{s-i}(-1)^j{\rm rank}(F_{i+j})$, for all $i$. Set $p>0$ to be the characteristic of $R$. Applying the Frobenius functor $\mathcal{F}$ on $F_{\bullet}$, the maps lift to their $p$'th power, hence  $\grade(I_{r_{i}}(\mathcal{F}(\phi_{i}))=I_{r_{i}}(\phi_{i})^{[p]})\geq i$. Thus, again the Buchsbaum-Einsenbud criterion implies that $\mathcal{F}(F_{\bullet})$ is acyclic. The bonus is that $H_{0}(\mathcal{F}(F_{\bullet}))= R/\tau^{[p]}.$ By iterating this procedure one gets that for all integer $e$, $\pdim(R/(\tau^{[p^{e}]}))\leq s$.
  In particular, $$\Ext^{j}_{R}(R/\tau^{[p^{e}]},R)=0\quad\text{for~all~} j> s.$$
  On the other hand $\grade(\tau^{[p^{e}]})=\grade(\tau)=\grade(J)=s$ by \autoref{PCM}(iii), hence 
$$\Ext^{j}_{R}(R/\tau^{[p^{e}]},R)=0\quad\text{for~all~} j< s.$$
 Taking the direct limit of the family of Ext-modules, we have  for all $j\neq s$ 
 $$H^{j}_{J}(R)=H^{j}_{\tau}(R)=\varinjlim_{e}\Ext^{j}_{R}(R/\tau^{[p^{e}]},R)=0.$$
   $(ii)$ is a consequence of $(i)$ according to \cite[Theorem 2.8]{DivTousi}. We have  $c(R/J)\geq c(\widehat{R/J})$.  If $c(\widehat{R})\geq 1$ then \cite[Theorem 2.8]{DivTousi} implies that $c(R/J)\geq \dim(\widehat{R/J})-c$ and if $c(\widehat{R})=-1$ or $0$, then $c(R/J)\geq \dim(\widehat{R/J})-1$.

\end{proof}


The following exemplifies ideals that do not admit free approaches. 
\begin{Example}Let $k$ be a field, $R=k[x_{1},\ldots,x_{4}]$ and $J=(x_{1},x_{2})\cap(x_{3},x_{4})$. Applying the Mayer-Vietories exact sequence, one sees that, independent of the characteristic of the field,   $\cohdim(J)=3$. In particular, if $k$ has a positive characteristic then $J$ does not admit a free approach according to \autoref{Pcohdim}(i). In \cite[Theorem-Example 5.6]{LyubHuneke}, the authors provide an example of a prime ideal that is not a cohomologically complete intersection, hence not admitting a free approach. 

On the positive side,  after proving \autoref{residualsliding}, we see that in a regular local ring residual intersections of ideals with sliding depth admit free approach. \end{Example}

\subsection{$r$-minimally generated ideals}

The theory of residual intersections emerged simultaneously with the introduction of the $G_{s}$-condition \cite{artin1972residual}. For an ideal $I$ in a Noetherian ring $R$, $I$ is said to satisfy the $G_{s}$-condition if $\mu(I_{\fp}) \leq \Ht(\fp)$ for all prime ideals $\fp \supset I$ with $\Ht(\fp) \leq s-1$.  ($\mu(-)$ is the function of the minimal number of generators). The $G_{s}$-condition is instrumental in reducing questions about residual intersections to those in linkage theory. Additionally, in the arithmetic of blow-up algebras, this condition is both necessary and sufficient for many theorems \cite{vasconcelos1994arithmetic}.

Alternatively, {\it $r$-minimally generated ideals} are located on the oposite side of ideals with $G_{s}$ condition. While the $G_{s}$ condition forces the local number of generators of an ideal to be at most the dimension of the ring, $r$-minimally generated ideals keep the same minimal number of generators, $r,$ from a certain codimension onward.
We  see in  our main result, \autoref{residual1}, that residual intersections of $r$-minimally generated ideals admit free approaches.  The remarkable point about this result is that such residual intersections are far from being Cohen-Macaulay. Hence, a priori, it is not obvious that they admit a free approach.

\begin{Definition}\label{Drmin} Let $R$ be a Noetherian  ring and $I$ an ideal of $R$. Let $\zeta$ be an integer. We say that $I$ is {\bf $r$-minimally generated from height $\zeta$}, if $\mu(I_{\fp})=r$  for any prime ideal $\fp\supseteq I$with $\Ht(\fp)\geq \zeta$. In the case where $\zeta=\Ht(I)$, we say $I$ is $r$-minimally generated.
\end{Definition}
Clearly, any ideal in a local ring is $r$-minimally generated from the height equal to the dimension of the ring. First examples of $r$-minimaly generated ideals which are not complete intersections, are almost complete intersection prime ideals which are NOT generically complete intersections. For instance, the defining ideal of a singular point of a hypersurface(in $\mathbb{A}^{n}$ or $\mathbb{P}^{n}$) is $n$-minimally generated.  At present,  the current theory of algebraic residual intersections does not address the study of subschemes of a hypersurface-minus-a-singular point.

Observe, $r$-minimally generated ideals exist for any number of generators $r$ and  any  height $h$. To see this, let $I$ be a complete intersection ideal generated by $r$ elements in a Noetherian ring $R$.  Let $h=r-c$ be the desired height. Take $f_{1},\ldots, f_{c}\in I$  a regular sequence. Then the ideal $I/(f_{1}^{2},\ldots,f_{c}^{2})$ in $R/(f_{1}^{2},\ldots,f_{c}^{2})$ is $r$-minimally generated of height $h$. 

Using Fitting ideals, one can easily obtain a criterion to decide when an ideal is $r$-minimally generated.  Let $j \in \mathbb{Z}$ be an integer. Set $\V_{j}(I):=\{\fp\in \Spec(R): \fp\supseteq I, \Ht(\fp)=j\}$. 
\begin{Proposition}\label{PFitt} Let $R$ be a Noetherian ring, $I$ an ideal minimally generated by $r$-elements and $R^{m}\xrightarrow{\phi}R^{r}\to I\to 0$  a presentation of $I$. In the following, except for part (i), we assume $r\geq 2$. 
\begin{itemize}
\item[(i)]{ For any integer $\zeta$,  $\bigcup_{j\geq \zeta}\V_{j}(I)\subseteq \V(I_{1}(\phi))$ if and only if $I$ is $r$-minimally generated from height $\zeta$.}
\item[(ii)]{ $\V(I_{1}(\phi))\subseteq \V(I)$, in particular $\Ht(I_{1}(\phi))\geq \Ht(I)$. }
\item[(iii)] Let $\zeta=\Ht(I_{1}(\phi))$. If $R$ is  local  and  $\dim(R)>\zeta>\Ht(I)$ then $I$ is not $r$-minimally generated from height $\zeta$.
\item[(iv)] Suppose that $\zeta:=\Ht(I_{1}(\phi))=\Ht(I)$. If  $I$ is $r$-minimally generated then $\V_{\zeta}(I_{1}(\phi))=\V_{\zeta}(I)$. Conversely, if $\min(I)= \V_{\zeta}(I)$ and this set has the same cardinality as $\V_{\zeta}(I_{1}(\phi))$ then $I$ is $r$-minimally generated.
\end{itemize}
\end{Proposition}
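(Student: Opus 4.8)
The plan is to run all four parts off a single local computation of the minimal number of generators. Tensoring the presentation $R^{m}\xrightarrow{\phi}R^{r}\to I\to 0$ with the residue field $k(\fp)$ of a prime $\fp$ gives the exact sequence $k(\fp)^{m}\to k(\fp)^{r}\to I_{\fp}/\fp I_{\fp}\to 0$, so $\mu(I_{\fp})=\dim_{k(\fp)}(I_{\fp}/\fp I_{\fp})=r-\rank_{k(\fp)}(\phi\otimes k(\fp))$; in particular $\mu(I_{\fp})=r$ if and only if $\phi\otimes k(\fp)=0$, i.e. if and only if every entry of $\phi$ lies in $\fp$, i.e. if and only if $\fp\in\V(I_{1}(\phi))$. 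Part (i) is then a restatement: $I$ is $r$-minimally generated from height $\zeta$ means $\mu(I_{\fp})=r$ for every $\fp\supseteq I$ with $\Ht(\fp)\geq\zeta$, and the primes in question are exactly the members of $\bigcup_{j\geq\zeta}\V_{j}(I)$, so the condition is precisely $\bigcup_{j\geq\zeta}\V_{j}(I)\subseteq\V(I_{1}(\phi))$. For part (ii): if $\fp\supseteq I_{1}(\phi)$ then $\mu(I_{\fp})=r\geq 2$, so $I_{\fp}$ is neither $0$ nor $R_{\fp}$; from $I_{\fp}\neq R_{\fp}$ one gets $I\subseteq\fp$. Hence $\V(I_{1}(\phi))\subseteq\V(I)$, and $\Ht(I_{1}(\phi))$, being the minimum of $\Ht(\fp)$ over a subset of $\V(I)$, is at least $\Ht(I)$.

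For part (iv) the key remark is that when $\zeta:=\Ht(I_{1}(\phi))=\Ht(I)$ the threshold in ``$r$-minimally generated from height $\zeta$'' is vacuous — every prime over $I$ has height $\geq\Ht(I)=\zeta$ — so by part (i) together with part (ii), $I$ is $r$-minimally generated if and only if $\V(I)=\V(I_{1}(\phi))$, i.e. $\sqrt{I}=\sqrt{I_{1}(\phi)}$. Granting this, the first assertion follows by intersecting $\V(I)=\V(I_{1}(\phi))$ with the height-$\zeta$ locus. For the converse, a prime $\fp\in\V_{\zeta}(I_{1}(\phi))$ contains $I$ by part (ii) and has height $\zeta=\Ht(I)$, hence is a minimal prime of $I$; so $\V_{\zeta}(I_{1}(\phi))\subseteq\V_{\zeta}(I)=\Min(I)$, and since these are finite sets of the same cardinality they coincide. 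Thus every minimal prime of $I$ contains $I_{1}(\phi)$, so any $\fp\supseteq I$ contains a minimal prime of $I$ and therefore $I_{1}(\phi)$, giving $\mu(I_{\fp})=r$; hence $I$ is $r$-minimally generated.

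Part (iii) I would prove not by contradiction but by directly exhibiting a prime that witnesses the failure. Since $\Ht(I_{1}(\phi))=\zeta<\dim(R)$, choose $\fq\in\Min(I_{1}(\phi))$ with $\Ht(\fq)=\zeta$; by part (ii) $\fq\supseteq I$, and since $\Ht(\fq)=\zeta>\Ht(I)$ there is a minimal prime $\fp_{0}$ of $I$ with $\fp_{0}\subseteq\fq$ and $\fp_{0}\neq\fq$, so $\Ht(\fp_{0})\leq\zeta-1$ and $I_{1}(\phi)\not\subseteq\fp_{0}$. Passing to the domain $D=R/\fp_{0}$, the image $\overline{I_{1}(\phi)}$ is a nonzero ideal, so $\V(\overline{I_{1}(\phi)})$ is a proper closed subset of $\Spec(D)$; I would then climb from the generic point of $D$ — say by adjoining to $\fp_{0}$ a partial system of parameters of $D$ chosen generically with respect to $\overline{I_{1}(\phi)}$ — to a prime $\fp$ with $\fp_{0}\subseteq\fp$, $I_{1}(\phi)\not\subseteq\fp$, and $\Ht(\fp)\geq\zeta$, the point being that the strict inequality $\dim(R)>\zeta$ leaves enough room in $D$ to reach height $\zeta$ before meeting $\V(\overline{I_{1}(\phi)})$. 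Such a $\fp$ satisfies $\fp\supseteq I$, $\Ht(\fp)\geq\zeta$ and $\mu(I_{\fp})<r$, so $I$ is not $r$-minimally generated from height $\zeta$.

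The hard part is precisely this prime-construction in part (iii). Since $R$ is only assumed Noetherian of finite dimension, not catenary, heights need not behave additively along $R\to R/\fp_{0}$, and one must argue carefully that going up in $D$ really produces a prime whose preimage in $R$ has height $\geq\zeta$ while staying outside $\V(\overline{I_{1}(\phi)})$. This is the one step that genuinely uses $\dim(R)>\zeta$ rather than merely $\Ht(I_{1}(\phi))=\zeta$, and it is what excludes the Nagata-type non-catenary examples in which $\Ht(I_{1}(\phi))$ is forced up to $\dim(R)$; once it is in place, parts (i), (ii) and (iv) are bookkeeping on top of the rank formula.
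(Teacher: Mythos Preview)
Parts (i), (ii), and (iv) are correct and are essentially the paper's argument: both you and the paper reduce everything to the equivalence $\mu(I_\fp)=r\Longleftrightarrow\fp\in\V(I_1(\phi))$, which you obtain by tensoring the presentation with $k(\fp)$ and the paper phrases via Fitting ideals. Your handling of (iv) through the observation that the condition is equivalent to $\V(I)=\V(I_1(\phi))$ is marginally slicker but materially the same.

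Part (iii), however, has a genuine gap, and you diagnose it yourself without closing it. First a minor issue in the setup: the claim that the minimal prime $\fp_0$ of $I$ inside $\fq$ satisfies $\fp_0\neq\fq$ is not justified --- minimal primes of $I$ need not all have height $\Ht(I)$, so $\fq$ itself could already be minimal over $I$, in which case $\Ht(\fp_0)=\zeta$ and $I_1(\phi)\subseteq\fp_0$, contrary to what you assert. More seriously, the ``climbing'' step is only a sketch. Choosing a partial system of parameters of $D=R/\fp_0$ generically with respect to $\overline{I_1(\phi)}$ does not, on its own, produce a prime whose pullback to $R$ has height $\geq\zeta$ \emph{and} avoids $I_1(\phi)$: Krull's theorem bounds the height of a minimal prime over $(x_1,\dots,x_h)$ from above, not below, and in a non-catenary local ring there is no mechanism for transferring height in $D$ back to height in $R$. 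Your phrase ``one must argue carefully'' is precisely where the proof should be.

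The paper's argument for (iii) avoids all of this by a cardinality count. Any prime of height $\zeta$ containing $I_1(\phi)$ is minimal over $I_1(\phi)$, so $\V_\zeta(I)\cap\V(I_1(\phi))\subseteq\min(I_1(\phi))$ is finite. On the other hand, from $\dim R>\zeta>\Ht(I)$ the paper extracts $\Ht(\fm/I)>1$, and then Ratliff's weak existence theorem (Matsumura, Theorem~31.2) gives that $\V_\zeta(I)$ is infinite. Hence $\V_\zeta(I)\not\subseteq\V(I_1(\phi))$, and part (i) finishes. No chain-tracking, no catenary hypothesis, no explicit construction --- the finite-versus-infinite dichotomy does all the work.
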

\begin{proof} $(i)$. This part follows from the facts that   $I_{1}(\phi)=\Fitt_{r-1}(I)$ and  $\V(\Fitt_{r-1}(I))=\{\fp\in \Spec(R): \mu(I_{\fp})\geq r\}$, \cite[Proposition 20.6]{EisenbudBook}.

$(ii)$. Let $\fp\supseteq I_{1}(\phi)$, tensoring the exact sequence $R^{m}\xrightarrow{\phi}R^{r}\to I\to 0$ into $R_{\fp}/\fp R_{\fp}$, we get $\dim(I_{\fp}/\fp I_{\fp})=r\geq 2$, as a vector space. On the other hand, if $\fp\not\supseteq  I$, then $I_{\fp}/\fp I_{\fp}=R_{\fp}/\fp R_{\fp}$ is a vector space of dimension $1$ which is a contradiction. Hence $\fp\supseteq I$.

$(iii)$.  The set $\V_{\zeta}(I_{1}(\phi))=\min(I_{1}(\phi))$ is a finite set. Let $\fm$ be the maximal ideal of $R$. Since $\dim(R)>\zeta>\Ht(I)$,  and $\fm\supset \sqrt{I_{1}(\phi)}\supset \sqrt{I}$, we have  $\Ht(\fm/I)>1$. Therefore Ratliff's weak existence theorem \cite[Thorem 31.2]{matsumura1989commutative} implies that $\V_{\zeta}(I)$ is an infinite set, in particular $\V_{\zeta}(I)\not\subseteq V(I_{1}(\phi))$, so that $I$ is not $r$-minimally generated from height $\zeta$, by part $(i)$.

$(iv)$. By $(ii)$, $\V_{\zeta}(I_{1}(\phi))\subseteq \V_{\zeta}(I)$.  If $I$ is $r$-minimally generated then by $(i)$, $\V_{\zeta}(I)\subseteq \V_{\zeta}(I_{1}(\phi)) $, hence $\V_{\zeta}(I_{1}(\phi))=\V_{\zeta}(I)$.  Conversely, if $\V_{\zeta}(I)$ has the same cardinality as $\V_{\zeta}(I_{1}(\phi))$ then $\V_{\zeta}(I_{1}(\phi))=\V_{\zeta}(I)=\min(I)$. On the other hand for any $j>\zeta$, any prime ideal $\fp$ of height $j$ that contains $I$ contains a minimal prime of $I$, that is $\fp\in  \V(I_{1}(\phi))$. Hence by $(i)$, $I$ is $r$-minimally generated. 
\end{proof}

We continue with our working example.
\begin{Example}\label{Ex2} Let $k$ be a field. 
\begin{itemize}
\item[(i)]{Set  $A= k[x_{1},\ldots,x_{4}]$ an $^{*}$local ring and $R=A/(x_{1},x_{2})\cap(x_{3},x_{4})$. Let $I=(x_{1},x_{2},x_{3},x_{4})\subset R$. $I$ is a prime ideal of height $2$. It is easy to see that $I=I_{1}(\phi)$. Hence, according to \autoref{PFitt}(iv), $I$ is $4$-minimally generated from height $2$. }
\item[(ii)]{ Set $A=k[x_{1},\ldots,x_{6}]$ and $R=A/(x_{1}x_{2}x_{3}x_{4},x_{2}x_{3}x_{4}x_{5},x_{1}x_{2}x_{3}x_{6},x_{1}x_{2}x_{5}x_{6},x_{1}x_{4}x_{5}x_{6},x_{3}x_{4}x_{5}x_{6})$ (This example appears in \cite[Example 3.6]{MuraiTerai}, as an example of a Stanley-Reisner ring which is S$_{3}$ but not Cohen-Macaulay). Let $H$ be a random $3\times 2$ matrix of linear forms in $R$, for instance 
$$H=\left(
\begin{matrix} 
x_1+x_6&x_1+x_2+x_5+x_{6}\\
x_{3}+x_4+x_5&x_1+x_2+x_3\\
x_2+x_{4}+x_5+x_{6}&x_1+x_3+x_4+x_5\\
\end{matrix}
\right).
$$
Put $I=I_{2}(H)$. Then $I$ is an ideal of codimension $2$ and projective dimension $1$. It is not difficult to see that the ideal generated by entries of $H$ is of codimension $4$. Accordingly, \autoref{PFitt}(i) implies that  $I$ is not $3$-minimally generated from height $3$.

}
\end{itemize}
\end{Example}

\section{Residual Intersections}\label{ResidualSection}

This section consists of two parts. In the first part, we focus on $s$-residual intersections  $J=(\fa:I)$ where $I$ is an $r$-minimally generated ideal and $s\geq r$.
To our knowledge, this type of residual intersections has yet to be studied in the literature.   A prototype example of such residual intersections is an $r$-residual intersection of an almost complete intersection prime ideal $I$ generated by $r$ elements, assuming, in addition, that $I$ is not generically a complete intersection. 
In the second part, we replace the conditions $s\geq r$ and $r$-minimally generation by assuming that  the ideal $I$ satisfies the sliding depth condition.  

To study residual intersections, we follow the method introduced in \cite{hassanzadeh2012cohen} and further developed in the works \cite{HamidNaeliton},\cite{ChardinNaeliton} and \cite{boucca2019residual}.

We first recall the basic definition of residual intersections. 
\begin{Definition}
Let 
 $R$ be  a Noetherian ring of dimension $d$.
\begin{itemize}
\item{an {\it (algebraic) $s$-residual intersection} of $I$ is a proper ideal $J$ of $R$ such that $\Ht(J)\geq s$ and $J=(\fa:_R I)$ for some ideal $\fa\subset I$  generated by $s$ elements.}
\item{A {\it geometric $s$-residual intersection} of $I$ is an algebraic $s$-residual intersection $J$ of $I$ such that $\Ht(I+J)\geq s+1.$}
\item An {\it arithmetic $s$-residual intersection}  $J=(\fa:I)$ is an algebraic $s$-residual intersection such that $\mu_{R_{\fp}}((I/\fa)_{\fp})\leq 1$ for all prime ideal $\fp \supseteq (I+J)$ with $\Ht(\fp)\leq s$. ( $\mu$ denotes the minimum number of generators.)
\end{itemize}

\end{Definition}
Throughout $R$ is   a Noetherian ring of dimension
$d$,  $I=(f_1,\ldots,f_r)$ is an ideal of grade $g\geq1$,\footnote {The condition $g\geq1$ is necessary to construct complexes of length $s$ rathar than $s+1$. In most of the proofs, the fact that $g\geq1$ does not appear.}
$\fa=(a_1,\ldots,a_s)$ is an ideal contained in $I$, $
J=\fa:_{R}I$--not necessarily an $s$-residual intersection--
and $S=R[t_1,\ldots,t_r]$ is a standard polynomial extension
of $R$ with indeterminates $t_{i}$'s , i.e.  $\deg(t_i)=1$ for all
$i$ and  $S_0=R$.  We set $\bt=t_1,\ldots,t_r$. For a graded
module $M$, $\beg(M):=\inf\{i : M_i\neq 0\}$ and
$\End(M):=\sup\{i : M_i\neq 0\}$.  For a sequence of elements $\xx$
in a commutative ring $A$ and an $A$-module $M$, we denote the
Koszul complex by $K_\bullet(\xx;M)$, its cycles by
$Z_i(\xx;M)$ and homologies by $H_i(\xx;M).$

 Let 
$$(a_{1}\ldots a_{s})=(f_{1}\ldots f_{r})
\left(\begin{matrix}
c_{11}&\ldots&c_{1s}\\
\vdots&\vdots&\vdots\\
 c_{r1}&\ldots&c_{rs}\\
\end{matrix}\right)
\quad \text{and }\quad 
(\gamma_{1}\ldots \gamma_{s})=(t_{1}\ldots t_{r})
\left(\begin{matrix}
c_{11}&\ldots&c_{1s}\\
\vdots&\vdots&\vdots\\
 c_{r1}&\ldots&c_{rs}\\
\end{matrix}\right).
$$
 Consider the Koszul complex
 \begin{center}$K_\bullet(f;R):
0\rightarrow K_r \xrightarrow{\delta_{r}^{f}} K_{r-1}
\xrightarrow{\delta_{r-1}^f} \ldots \rightarrow K_0\rightarrow 0.$
\end{center}

Simis-Vanconcelos and Herzog-Simis-Vasconcelos introduced the approximation complexes \cite{Approximation}. The  $\mathcal{Z}$-complex is crucial in studying residual intersection in \cite{hassanzadeh2012cohen}.  The  $\mathcal{Z}$-complex of $I$ with coefficients in $R$  is the following complex 

\begin{center}
$\mathcal{Z}_\bullet=\mathcal{Z}_\bullet(f;R): 0\rightarrow
Z_{r-1}\bigotimes_{R}S(-r+1) \xrightarrow{\delta_{r-1}^T} \ldots
\rightarrow Z_1\bigotimes_{R}S(-1) \xrightarrow{\delta_{1}^T}
Z_0\bigotimes_{R}S\rightarrow 0,$
\end{center}
where $Z_i=Z_i(f;R)$. 
Notice that since $\grade(I)>0$, hence $Z_r=0$.   The map $\delta^{T}_{i}: Z_{i}\bigotimes_{R}S(-i)\to Z_{i-1}\bigotimes_{R}S(-i+1)$ is defined by  sending $$e_{j_{1}}\wedge\ldots\wedge e_{j_i}\otimes P(\bt)\mapsto \sum_k(-1)^ke_{j_{1}}\wedge\widehat{e_{j_k}}\wedge \ldots\wedge e_{j_i}\otimes t_kP(\bt).$$
 A significant property of this complex is that $H_0(\mathcal{Z}_\bullet)$ is the symmetric algebra of $I$, denoted by $\mathcal{S}_I$. 
 $H_i(\mathcal{Z}_\bullet)$ is finitely generated $\mathcal{S}_I$-module for all $i$ \cite[4.3]{Approximation}. 
 In  \autoref{section3.2} we review more properties of this complex. For further details about approximation complexes see \cite{Approximation} and \cite{vasconcelos1994arithmetic}.

\subsection{residual intersections of $r$-minimally generated ideals} Our goal in this section is to construct a free approach for the residual intersection of $r$-minimally generated ideals. We modify the residual approximation complexes introduced in \cite{hassanzadeh2012cohen} to this order.

The maps $\delta^{T}_{i}:Z_{i}\bigotimes_{R}S(-i)\to Z_{i-1}\bigotimes_{R}S(-i+1)$ lift naturally to the maps
$\bar \delta^{T}_{i}:K_{i+1}\bigotimes_{R}S(-i)\to K_{i}\bigotimes_{R}S(-i+1)$. Using these extended maps, we construct a double complex:

\begin{equation}\label{black1}
 \xymatrix{
         &                           &                                                                               &             & K_{r}\otimes S(-1) \ar[d] &                       \\
         &                           &                                                                               &0\ar[r]      &K_{r-1}\otimes S(-1) \ar[d] &    \\
         &                           &  0 \ar[d]                                                                     &      &\vdots \ar[d]  &                   \\
         &  0 \ar[r] \ar[d]          & K_r\otimes S(-r+2)\ar[r] \ar^{\delta_{r}^f \otimes 1}[d] &\ldots\ar[r] &K_{3}\otimes S(-1) \ar^{\delta_{3}^f \otimes 1}[d]& \\
 0 \ar[r]& K_r\otimes S(-r+1) \ar^{\bar \delta^{T}_{r}}[r]\ar^{\bar \delta_{r}^f \otimes 1}[d] & K_{r-1}\otimes S(-r+2) \ar^(.7){\bar \delta^{T}_{r-1}}[r] \ar^{\bar \delta_{r-1}^f \otimes 1}[d]&\ldots\ar^{\bar \delta^{T}_{3}}[r] &K_{2}\otimes S(-1)\ar^{\bar \delta_{2}^f \otimes 1}[d]&        \\
 \textcolor{orange}{0}\ar@[yellow][r]& \textcolor{orange}{Z_{r-1}\bigotimes_{R}S(-r+1)}\ar@[yellow]^{ \delta^{T}_{r-1}}[r]& \textcolor{orange}{Z_{r-2}\otimes S(-r+2)} \ar@[yellow]^(.7){ \delta^{T}_{r-2}}[r] & \ldots\ar@[yellow]^{\delta^{T}_{2}}[r] & \textcolor{orange}{Z_1\bigotimes_{R}S(-1)}\ar@[yellow]^{\delta^{T}_{1}}[r]&  \textcolor{orange}{R\bigotimes_{R}S}\\ 
}
\end{equation}
The horizontal lines in the black part of the double complex in \autoref{black1} are the Koszul complex with respect to $\bt$.  We totalize the black part of \autoref{black1} to construct a complex of finite free $S$-modules $0\ra L_{r-1}\ra\ldots\ra L_{2}\xrightarrow{(\delta_{3}^f \otimes 1)\oplus \delta^{T}_{3}} L_{1}$ where $L_{i}=K_{i+1}\otimes_{R}(\oplus_{j=1}^{i}S(-j))$ for $i\geq 1$. Augmenting this complex with another term, we get the following complex of finite free $S$-modules: 
\begin{equation}\label{Lcomplex}
L_{\bullet}:0\longrightarrow L_{r-1}\longrightarrow\ldots\longrightarrow L_{2}\xrightarrow{(\delta_{3}^f \otimes 1)\oplus \delta^{T}_{3}} L_{1}\xrightarrow{\delta^{T}_{1}\circ(\bar \delta_{2}^f \otimes 1)} S
\end{equation}
We then consider the Koszul complex $K(\gamma_{1},\ldots,\gamma_{s};S)$ and set $G_{\bullet}:=\Tot(L_{\bullet}\otimes_{S}K_{\bullet}(\gamma;S))$
$$G_{\bullet}:0\ra G_{r+s-1}\ra\ldots\ra G_{r}\ra G_{r-1}\ldots\ra G_{1}\ra G_{0}\ra 0$$
\begin{equation}\label{G}
G_i=\left\lbrace
           \begin{array}{c l}
              S         & i=0,\\
           \bigoplus_{j=1}^{i}S(-j) ^{n_{ij}}          & 1\leq i \leq r-1, \text{for~some~} n_{ij},\\
           \bigoplus_{j=i-r+2}^{i}S(-j) ^{n_{ij}}            & r \leq i \leq  r+s-1, \text{for~some~} n_{ij}.
           \end{array}
         \right.
         \end{equation}
        
    The following lemma is essential to verify the acyclicity of free approaches of residual intersections.    
\begin{Lemma}\label{I=a} Keeping the notation above and assume that $\mathcal{L}$ is the defining ideal of the symmetric algebra $\mathcal{S}_{I}$. Then 
\begin{itemize} 
\item[(i)]{ If $I=\fa$, then $(\gamma)+\mathcal{L}=(\bt)$.}
\item[(ii)]{Let $(R,\fm)$ be a local ring and   assume that $s\geq r$ and that  $I$ is minimally generated by $r$ elements. If $I=\fa$ then    $(\gamma_{1},\ldots,\gamma_{s})=(\bt)$. }
\end{itemize}
\end{Lemma}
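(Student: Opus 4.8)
\medskip
\noindent\emph{Plan.}
The key input from the preceding material is the explicit shape of $\mathcal{L}$. Reading off the tail $Z_{1}\otimes_{R}S(-1)\xrightarrow{\delta_{1}^{T}}R\otimes_{R}S$ of $\mathcal{Z}_{\bullet}$ and using $H_{0}(\mathcal{Z}_{\bullet})=\mathcal{S}_{I}=S/\mathcal{L}$, one sees that $\mathcal{L}$ is generated by the linear forms $\sum_{i}b_{i}t_{i}$ as $(b_{1},\dots,b_{r})$ runs over the module $Z_{1}=Z_{1}(f;R)$ of Koszul $1$-cycles, i.e.\ over the syzygies of $(f_{1},\dots,f_{r})$. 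In both parts the inclusions $(\gamma)\subseteq(\bt)$ and $\mathcal{L}\subseteq(\bt)$ are immediate, since every $\gamma_{j}=\sum_{i}c_{ij}t_{i}$ and every generator of $\mathcal{L}$ already lies in $(\bt)$; so only the reverse inclusions need to be proved, and for these I will argue by pure matrix manipulation over $R$.

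\medskip
\noindent\emph{Part (i).}
Since $I=\fa$, each $f_{i}$ lies in $\fa=(a_{1},\dots,a_{s})$, so there is an $s\times r$ matrix $D$ over $R$ with $(f_{1}\ \cdots\ f_{r})=(a_{1}\ \cdots\ a_{s})\,D$. Combining this with the defining relation $(a_{1}\ \cdots\ a_{s})=(f_{1}\ \cdots\ f_{r})\,C$ yields $(f_{1}\ \cdots\ f_{r})(I_{r}-CD)=0$; hence every column of $I_{r}-CD$ is a syzygy of $(f_{1},\dots,f_{r})$, i.e.\ an element of $Z_{1}$. By the description of $\mathcal{L}$ above, each entry of the row vector $(t_{1}\ \cdots\ t_{r})(I_{r}-CD)$ is therefore a generator of $\mathcal{L}$, so $(t_{1}\ \cdots\ t_{r})(I_{r}-CD)\equiv 0\pmod{\mathcal{L}}$. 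Rewriting this as $(t_{1}\ \cdots\ t_{r})\equiv(t_{1}\ \cdots\ t_{r})CD=(\gamma_{1}\ \cdots\ \gamma_{s})\,D\pmod{\mathcal{L}}$ gives $t_{k}\in(\gamma_{1},\dots,\gamma_{s})+\mathcal{L}$ for every $k$, which is exactly $(\bt)\subseteq(\gamma)+\mathcal{L}$. (Intrinsically: in $(\mathcal{S}_{I})_{1}=I$ the images of the $t_{i}$ are the $f_{i}$ and those of the $\gamma_{j}$ are the $a_{j}$, so $I=\fa$ forces $(\bt)$ and $(\gamma)$ to have the same image in $\mathcal{S}_{I}$ in degree one, hence to generate the same ideal of $\mathcal{S}_{I}$, namely $\mathcal{S}_{I}^{+}$, since $\mathcal{S}_{I}$ is generated over $R$ in degree one.)

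\medskip
\noindent\emph{Part (ii).}
Here I improve the conclusion of (i) by eliminating $\mathcal{L}$, using locality and minimal generation. Reduce modulo $\fm$ and set $k=R/\fm$. Since $f_{1},\dots,f_{r}$ minimally generate $I$, their classes form a basis of the $k$-vector space $I/\fm I$; since $I=\fa$, the classes of $a_{1},\dots,a_{s}$ span $I/\fm I$; and by the relation $(a)=(f)C$, the map $k^{s}\to I/\fm I$ sending the $j$-th basis vector to $\bar a_{j}$ has matrix $\bar C:=C\bmod\fm$ relative to the basis $\{\bar f_{i}\}$. Hence $\bar C\colon k^{s}\to k^{r}$ is surjective, so $\rank\bar C=r$. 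As $s\geq r$, Nakayama's lemma gives that $C\colon R^{s}\to R^{r}$ is surjective, and since $R^{r}$ is free this surjection splits, so there is an $s\times r$ matrix $D'$ over $R$ with $CD'=I_{r}$. Then
\[
(t_{1}\ \cdots\ t_{r})=(t_{1}\ \cdots\ t_{r})\,CD'=(\gamma_{1}\ \cdots\ \gamma_{s})\,D',
\]
so $t_{k}\in(\gamma_{1},\dots,\gamma_{s})$ for every $k$, i.e.\ $(\bt)\subseteq(\gamma)$, and therefore $(\gamma_{1},\dots,\gamma_{s})=(\bt)$.

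\medskip
\noindent\emph{Where the difficulty lies.}
The manipulations are routine once the shape of $\mathcal{L}$ and the conventions for $C$ and the $\gamma_{j}$ are fixed; the one point needing genuine care is the step in (ii) showing that $C$ has a right inverse over $R$ — equivalently that $\bar C$ has full row rank — which is precisely where locality, minimal generation of $I$, and $s\geq r$ are all used, together with the splitting of a surjection onto a free module. The analogous subtle point in (i) is the identification of the columns of $I_{r}-CD$ as honest syzygies of the $f_{i}$, so that they become visible inside $\mathcal{L}$. No grade, depth, or $r$-minimal-generation hypotheses are needed for this lemma.
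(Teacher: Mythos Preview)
Your proof is correct. For part~(i), your matrix argument --- identifying the columns of $I_{r}-CD$ as syzygies of $(f_{1},\dots,f_{r})$ so that each entry of $(t_{1}\ \cdots\ t_{r})(I_{r}-CD)$ lies in $\mathcal{L}$ --- is a concrete unwinding of the paper's snake-lemma argument on the diagram comparing $\sum Rt_{i}\to I$ with $\sum R\gamma_{j}\to\fa$; the content is identical.

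For part~(ii) you take a genuinely different route. The paper argues by explicit Gaussian elimination: since $\fa\not\subseteq\fm I$ some $c_{ij}$ is a unit, so one eliminates $t_{1}$ from the system $\gamma_{j}=\sum c_{ij}t_{i}$; iterating this $r=\mu(I)$ times yields $(\gamma_{1},\dots,\gamma_{s})=(t_{1},\dots,t_{r},\gamma'''_{r+1},\dots,\gamma'''_{s})=(\bt)$. Your argument instead observes that the reduction $\bar C\colon k^{s}\to k^{r}$ is surjective (because the $\bar a_{j}$ span $I/\fm I$ and the $\bar f_{i}$ are a basis), applies Nakayama to the cokernel to conclude $C\colon R^{s}\to R^{r}$ is surjective, and then splits it over the free module $R^{r}$ to produce a right inverse $D'$ with $CD'=I_{r}$, whence $(t)=(\gamma)D'$. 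This is cleaner and more conceptual: it avoids the inductive bookkeeping of the elimination and makes the role of each hypothesis (locality for Nakayama, minimal generation for $\bar f_{i}$ to be a basis, $s\geq r$ for the rank count) transparent in a single stroke. The paper's elimination, on the other hand, is entirely hands-on and would be easier to adapt if one wanted to track precisely which $\gamma_{j}$'s suffice.
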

\begin{proof} $(i).$ To describe the defining ideal of $\mathcal{S}_I$, consider a presentation matrix of $I$,
\begin{equation} \label{symmetric}
R^{m}\xrightarrow{\left(\begin{matrix}
b_{11}&\ldots&b_{1m}\\
\vdots&\vdots&\vdots\\
 b_{r1}&\ldots&b_{rm}\\
\end{matrix}\right)} R^{r}\xrightarrow{\phi} I \to 0
\end{equation}
Setting $
(l_{1}\ldots l_{m})=(t_{1}\ldots t_{r})
\left(\begin{matrix}
b_{11}&\ldots&b_{1m}\\
\vdots&\vdots&\vdots\\
 b_{r1}&\ldots&b_{rm}\\
\end{matrix}\right),$  one has $\mathcal{L}=(l_{1},\ldots,l_{m})S$.  The map $\phi$ induces the following exact sequence
$$0\longrightarrow \mathcal{L}_{[1]}\lra \sum_{i=1}^{r}Rt_{i}\xra{\varphi}I\to 0,\quad \varphi(t_{i})=f_{i}.$$
We show that $\varphi$ induces a well-defined map $\bar \varphi:\sum_{i=1}^{s} R\gamma_{i}\to \fa$ sending $\gamma_{i}$ to $a_{i}$. We have to show that for any $p_{ij}\in R$ such that $\sum_{i=1}^{s} p_{ij}\gamma_{i}=0$, we get $\sum_{i=1}^{s} p_{ij}a_{i}=0$. This amounts to say that for the syzygy matrix $(p_{ij})$ of the matrix $(c_{ij})$, one  has  $(a_{1}\ldots a_{s})(p_{ij})=0$.  Since $(a_{1}\ldots a_{s})=(f_{1}\ldots,f_{r})(c_{ij})$, hence $(a_{1}\ldots a_{s})(p_{ij})=((f_{1}\ldots,f_{r})(c_{ij}))(p_{ij})=(f_{1}\ldots,f_{r})((c_{ij})(p_{ij}))=(f_{1}\ldots,f_{r})0=0$.
We then have the following diagram
\begin{equation}\label{snake}
 \xymatrix{
         0\ar[r]&      \mathcal{L}\ar[r]            &  \sum_{i=1}^{r}Rt_{i}\ar^{\varphi}[rr]      &&I\ar[r]             & 0                       \\
        0\ar[r] & \mathcal{L}\cap \sum_{i=1}^{s} R\gamma_{i}\ar@{-->}[u]\ar[r] &\sum_{i=1}^{s} R\gamma_{i}\ar[u]\ar[rr]      &&        \fa\ar[r]\ar@{^{(}->}[u] &0   \\
        }
\end{equation}
Now, having $I=\fa$, the snake lemma implies that $\sum Rt_{i}=\mathcal{L}+\sum R\gamma_{i}$ as $R$-modules. Therefore, we have the equality $(\bt)=\mathcal{L}+(\gamma)$ as $S$-ideals.

$(ii).$ Consider the system of equations
\begin{equation}
\left\lbrace
           \begin{array}{cl}
              \gamma_{1}=   & c_{11}t_{1}+\ldots+c_{r1}t_{r},\\
           \vdots          &\quad\quad\vdots\\
           \gamma_{s}=          & c_{1s}t_{1}+\ldots+c_{rs}t_{r}.
           \end{array}
         \right.
         \end{equation}

Since $\fa\not\subseteq \fm I$, one of the coefficients $c_{ij}$ is invertible, say $c_{11}$. Applying the elimination on the above system of equations, we have $(\gamma_{1},\ldots,\gamma_{s})=(t_{1},\gamma'_{2},\ldots,\gamma'_{s})$. Similarly, $I=(a_{1},a_{2}-c_{12}a_{1},\ldots,a_{s}-c_{1s}a_{1})$. Unless $\mu(I)=1$, there exists $2\leq i\leq s$ such that $a_{i}-c_{1i}a_{1}\not\in \fm I$, say $i=2$. Hence one of the coefficients, $c_{j2}-c_{12}c_{j1}$ is invertible. Then we eliminate $t_{2}$ from the expression of $(\gamma'_{3},\ldots,\gamma'_{s})$ to get $(\gamma''_{3},\ldots,\gamma''_{s})$ such that $(\gamma_{1},\ldots,\gamma_{s})=(t_{1},t_{2},\gamma''_{3},\ldots,\gamma''_{s})$.  This procedure stops at the $r=\mu(I)$'th step. In that case 
$(\gamma_{1},\ldots,\gamma_{s})=(t_{1},\ldots,t_{r},\gamma'''_{r+1},\ldots,\gamma'''_{s})=(\bt)$.
\end{proof}
The following example shows that the conditions $s\geq r$ and $\mu(I)=r$ in the \autoref{I=a}(ii) are necessary.
\begin{Example} Let $R=k[x,y,z]$, $I=(xy+xz,xz-x,xy)$ and $\fa=(x)$. Here, $\fa=I$ and $a_{1}=f_{1}-f_{2}-f_{3}$. Hence, $\gamma_{1}=t_{1}-t_{2}-t_{3}$, clearly, $(\gamma_{1})\neq (t_{1},t_{2},t_{3})$. 
Now, let $\fa=(xy+x^{2}z,x^{2}z-x,xy)$ and $I=(x,xz)$. $\fa=I=(x)$. However, setting $\gamma_{1}=xt_{2}+yt_{1},\gamma_{2}=xt_{2}-t_{1},\gamma_{3}=yt_{1}$, we have $(\gamma_{1},\gamma_{2},\gamma_{3})=(t_{1},xt_{2})\neq (t_{1},t_{2})$.
\end{Example}
Since $G_{\bullet}$ is a homogenous complex of $S$-modules, $H_{i}(G_{\bullet})$ are graded $S$-modules, so that the local cohomology modules $H^{j}_{\bt}(H_{i}(\G_{\bullet}))$ are graded modules. We denote the degree $i$ piece of this module by $H^{j}_{\bt}(H_{i}(\G_{\bullet}))_{[i]}$.
\begin{Lemma}\label{lem}
 Suppose that any of the following cases holds.
\begin{itemize}
\item[(i)]{$R$ is a local ring, $I=\fa$, $s\geq r$ and $\mu(I)=r$;}
\item[(ii)]{$I=\fa=(1)$.}
\end{itemize}
Then  $H^{j}_{\bt}(H_{i}(\G_{\bullet}))_{[0]}=0$ for all $j\geq 0$ and $i>0$; furthermore  $H_{0}(G_{\bullet})_{[0]}=R$. 
\end{Lemma}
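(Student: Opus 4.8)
The plan is to analyze the double complex in \autoref{black} degree by degree in the $\bt$-grading, exploiting the hypotheses (i) or (ii) to understand the augmented bottom row. Observe first that $G_\bullet=\Tot(L_\bullet\otimes_S K_\bullet(\gamma;S))$, and that all the twists $S(-j)$ occurring in $G_i$ have $j\geq 1$ for $i\geq 1$, while $G_0=S$ sits in internal degree $0$. Hence in internal degree $0$ the complex $(G_\bullet)_{[0]}$ collapses: only $G_0$ contributes, and $H_0(G_\bullet)_{[0]}=S_{[0]}/(\text{image in degree }0)$. Since the image of $G_1\to G_0$ lands in $(\gamma_1,\dots,\gamma_s)S+\mathcal{L}$ (this is how the map $\delta^T_1\circ(\bar\delta_2^f\otimes 1)$ composed with the Koszul differential on $\gamma$ is built — it is precisely the defining ideal of $\mathcal{S}_I$ together with the $\gamma_i$), and $(\gamma)+\mathcal{L}$ has no component in $\bt$-degree $0$ (both $\gamma_i$ and the $l_j$ are linear in $\bt$), we get $H_0(G_\bullet)_{[0]}=R$. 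That settles the ``furthermore'' clause.

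For the main assertion, I would first reduce case (i) to case (ii): under (i) we have $R$ local, $I=\fa$, $s\geq r$, $\mu(I)=r$, so \autoref{I=a}(ii) gives $(\gamma_1,\dots,\gamma_s)=(\bt)$; under (ii), $I=\fa=(1)$, so $\mathcal{L}=0$ (the symmetric algebra of $R$ is the polynomial ring) and \autoref{I=a}(i) gives $(\gamma)=(\bt)$ as well. So in both cases $(\gamma_1,\dots,\gamma_s)S=(\bt)S$. Now the key point: the Koszul complex $K_\bullet(\gamma;S)$ on a sequence generating the irrelevant-type ideal $(\bt)$ is, up to the twists, essentially the Koszul complex on $\bt$ with extra redundant generators; in particular $\bt$ annihilates its higher homology and the homology is concentrated in the ``top'' internal degrees. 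Totalizing $L_\bullet\otimes_S K_\bullet(\gamma;S)$ and taking the spectral sequence of the double complex, I would filter so that one page computes $H_i(L_\bullet\otimes_S K_\bullet(\gamma;S))$ via first taking Koszul homology in the $\gamma$-direction. Because $(\gamma)=(\bt)$ and each $L_i$ is a free $S$-module generated in internal degrees $\geq 1$ (for $i\geq 1$) while $L_0=S$, the internal-degree-$0$ strand of every term of the totalization with $i>0$ is forced to vanish: any nonzero class in $H_i(G_\bullet)_{[0]}$ would have to be represented by a cycle supported in internal degree $0$, but for $i\geq 1$ the module $G_i$ has no generators in internal degree $0$ at all, so $(G_i)_{[0]}=0$ and a fortiori $H_i(G_\bullet)_{[0]}=0$. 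Then $H^j_{\bt}(H_i(G_\bullet))_{[0]}=0$ follows because $H^j_{\bt}$ of a graded module is computed from a Čech complex built from the $\bt$, which raises degrees by nonpositive amounts only through the localizations — more carefully, $H^j_\bt$ commutes with the grading and $(H^j_\bt(M))_{[0]}$ is a subquotient of pieces of $M$ in degrees $\leq 0$ of $M_{\bt_i}$; since $H_i(G_\bullet)$ is a finitely generated $S$-module generated in degrees $\geq 1$, its $\bt$-torsion-free quotients localize to modules with no negative-degree part feeding degree $0$, so $(H^j_\bt(H_i(G_\bullet)))_{[0]}=0$.

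I expect the genuine obstacle to be the last step: justifying cleanly that $H^j_\bt(H_i(G_\bullet))_{[0]}=0$ for \emph{all} $j\geq 0$, not merely that $H_i(G_\bullet)_{[0]}=0$. Vanishing of the degree-$0$ piece of a module does not by itself force vanishing of the degree-$0$ piece of its local cohomology, since $H^j_\bt$ can shift gradings when one inverts the $t_i$'s. The way I would handle this is to note that $H_i(G_\bullet)$ is a finitely generated graded $S$-module which is \emph{generated in strictly positive internal degrees} for $i>0$ (inherited from the twists $S(-j)$, $j\geq 1$, in \autoref{G}), and that for such a module $M$ one has $M_\bt=0$ in a range, or more precisely that the Čech complex $\check{C}^\bullet_\bt(M)$ in internal degree $0$ is exact: each $M_{t_{i_1}\cdots t_{i_k}}$ in degree $0$ is a localization of $\bigoplus_{n\geq 1}M_n$ at products of the $t$'s, and one checks the alternating sum of these is acyclic exactly as in the computation of $H^j_\bt(S)$. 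Alternatively — and this is probably the cleanest route — I would invoke the structure already present in the paper: $H_i(G_\bullet)$ is a subquotient of the approximation-complex homology $H_i(\mathcal{Z}_\bullet)$, which is a finitely generated $\mathcal{S}_I$-module (cited after the definition of $\mathcal{Z}_\bullet$, \cite[4.3]{Approximation}), and the $\mathcal{S}_I$-module structure forces the degree-$0$ component of $H^j_\bt$ to be controlled by the degree-$0$ and degree-$1$ strands, which we have shown vanish for $i>0$. The remaining parts of the argument — the reduction via \autoref{I=a}, the collapse in internal degree $0$, and $H_0(G_\bullet)_{[0]}=R$ — are routine bookkeeping with the double complex.
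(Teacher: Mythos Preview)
Your argument has a genuine gap at exactly the place you flag yourself: concluding $H^j_{\bt}(H_i(G_\bullet))_{[0]}=0$ for $j>0$ from the fact that $H_i(G_\bullet)$ lives in positive internal degrees. This inference is false. For instance, the ideal $(\bt)\subset S$ is concentrated in degrees $\geq 1$, yet from the short exact sequence $0\to(\bt)\to S\to R\to 0$ one computes $H^1_{\bt}((\bt))_{[0]}\cong R\neq 0$ whenever $r\geq 2$. So neither ``generated in strictly positive degrees'' nor any Čech-complex bookkeeping of the kind you sketch can close the gap. Your alternative via the $\mathcal{S}_I$-module structure is too vague as stated and does not control the degree-$0$ strand of $H^j_{\bt}$ either.

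The paper's mechanism is different and much cleaner: one shows that $H_i(G_\bullet)$ is $(\bt)$-\emph{torsion}, whence $H^j_{\bt}(H_i(G_\bullet))=0$ for all $j>0$ automatically, and $H^0_{\bt}(H_i(G_\bullet))=H_i(G_\bullet)$, which then vanishes in degree $0$ for $i>0$ by your (correct) observation that $(G_i)_{[0]}=0$. The torsion is obtained as follows. Filtering $G_\bullet=\Tot(L_\bullet\otimes_S K_\bullet(\gamma;S))$ by Koszul homology in the $\gamma$-direction already gives that every $H_i(G_\bullet)$ is $(\gamma)$-torsion. In case~(i), \autoref{I=a}(ii) gives $(\gamma)=(\bt)$ and you are done. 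In case~(ii) your claim that $\mathcal{L}=0$ is incorrect: with $r\geq 2$ generators of the unit ideal there are plenty of syzygies, so $\mathcal{L}\neq 0$ and \autoref{I=a}(i) only yields $(\gamma)+\mathcal{L}=(\bt)$. The extra ingredient is that when $I=(1)$ the Koszul complex $K_\bullet(\ff;R)$ is split exact, so the vertical maps in the double complex are surjective and one checks that $L_\bullet$ is quasi-isomorphic to the approximation complex $\mathcal{Z}_\bullet$; hence $H_i(G_\bullet)$ is also filtered by subquotients of the $\mathcal{S}_I$-modules $H_j(\mathcal{Z}_\bullet)$ and is therefore $\mathcal{L}$-torsion. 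Combining, $H_i(G_\bullet)$ is $((\gamma)+\mathcal{L})$-torsion, i.e.\ $(\bt)$-torsion. You had the phrase ``$\bt$ annihilates its higher homology'' in hand but never used it; that annihilation (extended to $(\gamma)+\mathcal{L}$ in case~(ii)) is precisely the missing idea.
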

\begin{proof} $(i)$. Since $G_{\bullet}$ is the totalization of $L_{\bullet}\otimes_{S}K_{\bullet}(\gamma;S)$. The convergens of spectral sequences shows that $H_{i}(G_{\bullet})$, for all $i$, has a filtration of subquotients of $H_{j}(\gamma;S)$. Hence,  $H_{i}(G_{\bullet})$  are $(\gamma)$-torsion for all $i$. Now, according to  \autoref{I=a},  $(\bt)=(\gamma)$, so that $H^{j}_{\bt}(H_{i}(G_{\bullet}))=0$ for all $i$ and for  all $j>0$. For $j=0$, we have $H^{0}_{\bt}(H_{i}(G_{\bullet}))=H_{i}(G_{\bullet})$ which is a subquotient of $G_{i}$. Regarding \autoref{G}, we get $(G_{i})_{[0]}=0$ for all $i>0$ and $H_{i}(G_{\bullet})_{[0]}=(G_{0})_{[0]}=R$ . This completes the proof of part $(i)$.

$(ii)$. Since $I=(1)$, the Koszul complex $K_{\bullet}(f_{1},\ldots, f_{r};R)$ is split exact. In particular the maps $\bar \delta_{i}^f \otimes 1$ in \autoref{black1} are all surjective. 
We then show that  $L_{\bullet}$ and the approximation complex $\mathcal{Z}_{\bullet}$ are quasi-isomorphisms.
\begin{equation}\label{ZL0}
 \xymatrix{
  L_{\bullet}:               0\ar[r]& L_{r-1}\ar^{\bar\delta_{r}^f \otimes 1}[d]\ar[r]      &L_{r-2}\ar[r]\ar^{\bar\delta_{r-1}^f \otimes 1}[d]      &\ldots\ar^{( \delta_{3}^f \otimes 1)\oplus \delta^{T}_{3}}[r]&L_{1}\ar[r]^{\delta^{T}_{1}\circ(\bar \delta_{2}^f \otimes 1)}\ar^{\bar\delta_{2}^f \otimes 1}[d]& S\ar[d]  \\
 \mathcal{Z}_{\bullet}:0\ar[r]& Z_{r-1}\bigotimes_{R}S(-r+1)\ar^{ \delta^{T}_{r-1}}[r]& Z_{r-2}\otimes S(-r+2) \ar^(.7){ \delta^{T}_{r-2}}[r] & \ldots\ar^(0.3){\delta^{T}_{2}}[r] & Z_1\bigotimes_{R}S(-1)\ar^{\delta^{T}_{1}}[r]&  Z_{0}\bigotimes_{R}S\\ 
}
\end{equation}
The fact that $\overline{ \delta_{i+1}^f \otimes 1}: H_{i}(L_{\bullet})\to H_{i}(\mathcal{Z}_{\bullet})$  is an isomorphism for $i\geq 2$, is a straightforward conclusion from the convergence of the spectral sequences driven from double complex \autoref{black1}.

We show that $\overline {\delta_{2}^f \otimes 1}: H_{1}(L_{\bullet})\longrightarrow   H_{1}(\mathcal{Z}_{\bullet})$ is an isomorphism. 
 Cosider the map $$\overline{\delta_{2}^f \otimes 1}:\frac{\ker(\delta^{T}_{1}\circ(\bar \delta_{2}^f \otimes 1))}{\im(\delta_{3}^f)\oplus \im(\delta^{T}_{3})}\longrightarrow  \frac{\ker(\delta^{T}_{1})}{\im(\delta^{T}_{2})}$$
Surjectivity: set $d_{2}=\bar\delta_{2}^f \otimes 1$ and $d_{3}=\bar\delta_{3}^f \otimes 1$. Let $z_{1}\in \ker(\delta^{T}_{1})$. Since $d_{2}:L_{1}\to \mathcal{Z}_{1}$ is surjective there exists $x_{2}\in L_{1}$ with $d_{2}(x_{2})=z_{1}$. Then $\delta^{T}_{1}\circ d_{2}(x_{2})=\delta^{T}_{1}(z_{1})=0$. Thus $x_{2}\in \ker(\delta^{T}_{1}\circ(\bar \delta_{2}^f \otimes 1))$ and $\bar d_{2}(\bar x_{2})=\bar z_{1}$.

Injectivity: Let $\bar x_{2}\in ker(\bar d_{2})$. Then $d_{2}(x_{2})=\delta^{T}_{2}(z_{2})$ for some $z_{2}\in \mathcal{Z}_{2}$. Since $d_{3}:L_{2}\to \mathcal{Z}_{2}$ is onto, there exists $x_{3}\in L_{2}$ such that $d_{3}(x_{3})=z_{2}$. By commutativity of the diagram \autoref{black1}, we have $d_{2}(\delta_{3}^{T}(x_{3}))=\delta_{2}^{T}(d_{3}(x_{3}))=\delta_{2}^{T}(z_{2}).$ Hence, $d_{2}(\delta_{3}^{T}(x_{3}))=d_{2}(x_{2})$. Therefore, $x_{2}-\delta_{3}^{T}(x_{3})\in ker(d_{2})=\im(\delta_{3}^f \otimes 1)$. Thus, $x_{2}\in \im(\delta_{3}^f)\oplus \im(\delta^{T}_{3})$.

We return to the complex $G_{\bullet}=\Tot(L_{\bullet}\otimes_{S}K_{\bullet}(\gamma;S))$. Since the homologies of this complex are filtered by sub-quotients of homologies of the Koszul complex $K_{\bullet}(\gamma; S)$, $H_{i}(G_{\bullet})$ are $(\gamma)$-torsion. On the other hand,  $H_{i}(G_{\bullet})$ is filtered by sub-quotients of  homologies of $\mathcal{Z}_{\bullet}$, since  $\mathcal{Z}_{\bullet}\simeq L_{\bullet}$. Since $H_{i}(\mathcal{Z}_{\bullet})$ are $S/\mathcal{L}$-modules, it follows that $H_{i}(G_{\bullet})$ are $\mathcal{L}$-torsion and hence $((\gamma)+\mathcal{L})$-torsion. \autoref{I=a}(i), then implies that  $H_{i}(G_{\bullet})$ are $(\bt)$-torsion. Accordingly, $H^{j}_{\bt}(H_{i}(G_{\bullet}))=0$ for all $i$ and for  all $j>0$. As for $j=0$, we have $H^{0}_{\bt}(H_{i}(G_{\bullet}))=H_{i}(G_{\bullet})$ which is a subquotient of $G_{i}$.  The argument then completes as that of part $(i)$.
\end{proof}
Consider the complex $G_{\bullet}$. 
Since $G_{i}=\oplus S(-j)$, $H^{j}_{\bt}(G_{i})=0$ for $j<r$. For $j=r$ and $i\leq r-1$, according to  the formulas in \autoref{G}, we have $H^{r}_{\bt}(G_{i})=\bigoplus^{n_{ij}}\bigoplus_{j=1}^{i}H^{r}_{\bt}(S)(-j).$  Since $\End(H^{r}_{\bt}(S))=-r$, we get $H^{r}_{\bt}(G_{i})_{[0]}=0$ for all $i\leq r-1$. Hence 
 $$H^{r}_{\bt}(G_{\bullet})_{[0]}=0\lra H^{r}_{\bt}(G_{r+s-1})_{[0]}\lra \ldots\lra  H^{r}_{\bt}(G_{r})_{[0]}\lra 0.$$
  For each $i$, $H^{r}_{\bt}(G_{i})_{[0]}$ is a finite free $R$-module. We call the terms of $H^{r}_{\bt}(G_{\bullet})_{[0]}$ by $F_{i}$ that is 
 \begin{equation}\label{ZL}
 \xymatrix{
              0\ar[r]& H^{r}_{\bt}(G_{r+s-1})_{[0]}\ar[r]\ar@{=}[d]      &H^{r}_{\bt}(G_{r+s-2})_{[0]}\ar[r]\ar@{=}[d] &\ldots\ar[r]&H^{r}_{\bt}(G_{r})_{[0]}\ar@{=}[d] \\
 0\ar[r]& F_{s}\ar[r]& F_{s-1}\ar[r] & \ldots\ar^{d_{2}}[r] & F_{1}.\\ 
}
\end{equation}

The next proposition introduces the free complex, which will be the free approach of residual intersections.
\begin{Proposition}\label{F} Let  $R$ be  a Noetherian ring of dimension
$d$ and $I$ contains a regular element. Keeping the   notation of \autoref{ZL}, there exists a map $d_{1}:F_{1}\lra R$   with $\tau:=\im(d_{1})$ such that $d_{1}\circ d_{2}=0$ and  
\begin{itemize}
\item[(i)]{$\tau\subseteq J$.}
\item[(ii)]{$\tau_{\fp}=J_{\fp}=\fa_{\fp}$ for all $\fp\in \Spec(R)\setminus \V(I)$.}
\item[(iii)]{$\Supp(H_{i}(F_{\bullet}))\subseteq \V(\fa)$ where $F_{\bullet}:0\to F_{s}\to\ldots F_{2}\xrightarrow{d_{2}} F_{1} \xrightarrow{d_{1}} R\to 0$.}
\item[(iv)]{${\sqrt\fa}\subseteq {\sqrt \tau}$} (indeed, $\fa\subseteq \tau$ after \autoref{taustr}).
\end{itemize}
\end{Proposition}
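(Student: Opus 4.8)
The plan is to extract the map $d_1$ and verify its properties by tracking the construction through the local cohomology functor $H^r_{\bt}(-)_{[0]}$ applied to the complex $G_\bullet$. First I would observe that the complex $G_\bullet$ was built precisely so that $H^r_{\bt}(G_\bullet)_{[0]}$ is a complex of finitely generated free $R$-modules $F_s \to \cdots \to F_1$ (the terms $H^r_{\bt}(G_i)_{[0]}$ for $i \le r-1$ vanish by the degree count $\End(H^r_{\bt}(S)) = -r$, as recorded just before \autoref{ZL}). To produce $d_1 : F_1 \to R$ I would use that $H_0(G_\bullet) = \mathcal{S}_I$ in degree $0$ equals $R$ in low degrees, and that the augmentation $G_1 \to G_0 = S$ composed with the identification in degree $0$ gives a map; applying $H^r_{\bt}(-)_{[0]}$ and chasing the connecting maps from the hypercohomology spectral sequence of $G_\bullet$ produces $d_1$ with $d_1 \circ d_2 = 0$. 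The key identity $\tau = \im(d_1)$ should come from identifying $H^r_{\bt}(H_0(G_\bullet))_{[0]}$ with $R/\tau$ via the two spectral sequences converging to the hyper-local-cohomology of $G_\bullet$: one has $E_2$-page with the $F_\bullet$-homology, the other with $H^j_{\bt}(H_i(G_\bullet))_{[0]}$, and the collapse in degree $0$ forces $H_0(F_\bullet) = R/\tau$ where $\tau = \im(d_1)$.

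For parts (i) and (ii), the plan is to localize. For (ii): at a prime $\fp \not\supseteq I$, the ideal $I_\fp = R_\fp$, so the constructions of \autoref{lem}(ii) apply after localization — the complex $G_\bullet \otimes R_\fp$ has all homologies $(\bt)$-torsion, so $H^r_{\bt}(G_\bullet \otimes R_\fp)_{[0]}$ computes $R_\fp/\tau_\fp$ with no higher homology, and one checks this forces $\tau_\fp = \fa_\fp$. Simultaneously, since $\Ht(J) \ge \Ht(\fa)$ and outside $\V(I)$ the colon $\fa :_R I$ localizes to $\fa_\fp$, we get $\tau_\fp = \fa_\fp = J_\fp$. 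For (i), that $\tau \subseteq J = \fa :_R I$: I would argue that $\tau I \subseteq \fa$ by a direct computation — elements of $\tau$ arise as specific combinations of the $a_i$'s appearing through the $\gamma_i$'s and the structure of $G_\bullet$, so multiplying by a generator $f_k$ of $I$ lands in $\fa$. Alternatively, since $I$ contains a regular element, $\tau \subseteq J$ can be deduced from $\tau_\fp \subseteq J_\fp$ at all associated primes $\fp$ of $R/\fa$ (or of $R$), combined with the regular-element hypothesis to rule out embedded issues.

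For part (iii): $\Supp(H_i(F_\bullet)) \subseteq \V(\fa)$ should follow from (ii) — outside $\V(I) \supseteq \V(\fa)$... wait, $\V(\fa) \supseteq \V(I)$ since $\fa \subseteq I$; so I would show that after inverting any element of $\fa$, the complex $F_\bullet$ becomes split exact. Concretely, localizing at $\fp \not\supseteq \fa$: if moreover $\fp \not\supseteq I$ then part (ii) and the acyclicity argument from \autoref{lem} give that $F_\bullet \otimes R_\fp$ is a free resolution of $R_\fp/\fa_\fp = R_\fp/\tau_\fp$, and since $\fa_\fp = R_\fp$ this is just $R_\fp$ in degree $0$, so the complex is exact in positive degrees. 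The remaining case $\fp \supseteq I$, $\fp \not\supseteq \fa$ is impossible since $\fa \subseteq I$. For part (iv), $\sqrt{\fa} \subseteq \sqrt{\tau}$: from (ii), $\tau$ and $\fa$ agree outside $\V(I)$, and I would show $\V(\tau) \subseteq \V(\fa)$ together with the fact that the minimal primes of $\fa$ not containing $I$ are also in $\V(\tau)$; the primes of $\fa$ containing $I$ are handled separately, using that $\Ht(J) \ge s = \mu(\fa)$ forces control on these. The forward reference to \autoref{taustr} for the stronger statement $\fa \subseteq \tau$ signals that (iv) here is the weaker radical version, provable by the localization comparison alone.

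The main obstacle I anticipate is the extraction and well-definedness of $d_1$ together with the identification $H_0(F_\bullet) = R/\tau$ with $\tau \subseteq J$: this is where the two spectral sequences of the double complex $G_\bullet$ must be reconciled in degree $0$, and where one must verify that the "virtual" differential $d_1$ out of $F_1$ lands in $R$ compatibly with the augmentation $\mathcal{S}_I \twoheadrightarrow R/I$, so that $\im(d_1) \subseteq J$ rather than merely $\subseteq I$. The subtlety is that $\mathcal{L}$-torsion and $(\gamma)$-torsion of the homologies (established in \autoref{lem} only under $I = \fa$) are \emph{not} available here in general, so the vanishing $H^r_{\bt}(G_\bullet)_{[0]}$ computing a short complex must instead be obtained purely from the grading/degree bounds in \autoref{G}, and the precise image $\tau$ must be read off from the explicit form of the top differential $H^r_{\bt}(G_r \to G_{r-1})_{[0]}$ composed into $R$.
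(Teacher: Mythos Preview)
Your overall framework --- run the two spectral sequences on $C^\bullet_{\bt}\otimes_S G_\bullet$, read off $F_\bullet$ from the vertical one, and extract $d_1$ as an edge/transgression map --- matches the paper. Parts (ii) and (iii) are also on the right track: localize outside $\V(\fa)$ so that $I_\fp=\fa_\fp=(1)$, invoke \autoref{lem}(ii), and conclude exactness.

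There is a real gap in your plan for (i). Neither of your two proposed routes works. The ``direct computation'' that $\tau I\subseteq\fa$ is never specified, and there is no obvious description of elements of $\tau$ in terms of the $a_i$'s at this stage (that only comes in \autoref{taustr}). The ``localize at associated primes'' route fails because $\Ass(R/J)$ or $\Ass(R/\fa)$ can meet $\V(I)$, and part (ii) tells you nothing there. What the paper actually does is a degree argument inside $H_0(G_\bullet)=S/(f_it_j-f_jt_i,\gamma_1,\ldots,\gamma_s)$: since $\tau=\mathcal{T}_{[0]}$ and $\mathcal{T}_{[j]}=0$ for $j\geq 1$, one has $(\bt)\tau=0$ in this quotient, whence
\[
\tau\subseteq (f_it_j-f_jt_i,\gamma):_R(\bt)\subseteq (\mathcal{L},\gamma):_R(\bt)=\fa:_R I=J.
\]
This colon computation is the missing idea. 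Relatedly, two small misidentifications in your write-up: $H_0(G_\bullet)$ is not $\mathcal{S}_I$ but the quotient $S/(f_it_j-f_jt_i,\gamma)$, and $\tau$ sits inside $H^0_{\bt}(H_0(G_\bullet))_{[0]}$ via the $(^\infty E^{0,0}_{hor})_{[0]}$ term, not inside $H^r_{\bt}(H_0(G_\bullet))_{[0]}$.

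For (iv) you are overcomplicating things and invoking a hypothesis ($\Ht(J)\geq s$) that is not assumed in this proposition. Part (iv) is a one-line consequence of (iii): $\V(\tau)=\Supp(R/\tau)=\Supp(H_0(F_\bullet))\subseteq\V(\fa)$, hence $\sqrt{\fa}\subseteq\sqrt{\tau}$.
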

\begin{proof} 
 In order to get the zero term of the complex $F_{\bullet}$, we tensor $G_{\bullet}$ with the \v{C}ech complex of $S$ with respect to the sequence $\bt$, $C^{\bullet}_{\bt}:0\ra C^{r}\ra\ldots\ra C^{1}\ra 0$. We then study the horizontal and vertical spectral sequences derived from the double complex $C^{\bullet}_{\bt}\otimes_{S}G_{\bullet}$.
Setting this double complex in the third quadrant of the cartesian plane, we get 
\begin{equation}
(^{1}\E_{ver}^{-i,-j})_{[0]}=\left\lbrace
           \begin{array}{cl}
              0  & j\neq r,\\
             H^{r}_{\bt}(G_{i})_{[0]}&j=r, i\geq r,\\
           0     & j=r, i<r
           \end{array}
         \right.
         \quad \text {and~}\quad ^{1}\E_{hor}^{-i,-j}=C^{j}_{\bt}\otimes_{S}H_{i}(G_{\bullet}).
         \end{equation}

The second terms of the horizontal spectral  sequence are  $^{2}\E_{hor}^{-i,-j}=H^{j}_{\bt}(H_i(\G_{\bullet}))$. 
 The following diagram depicts these spectral sequences wherein $H_i:=H_i(\G_{\bullet})$. The line at the bottom is the only non-zero term of $^{1}\E_{ver}^{-i,-j}$. The blue arrow is the map induced by the convergence of the spectral sequence.
\begin{equation}\label{bigdouble}
 \xymatrix{
 H^{0}_{\bt}(H_{s+r-1})  &          &               \ldots   &H^{0}_{\bt}(H_2)&  H^{0}_{\bt}(H_1) \ar@[--->]_(.3){ ^{2}d^{-1,0}_{hor}}[ldd]  &H^{0}_{\bt}(H_0) \\
 H^{1}_{\bt}(H_{s+r-1})  &          &               \ldots    &H^{1}_{\bt}(H_2)&  H^{1}_{\bt}(H_1)      & H^{1}_{\bt}(H_0)                        \\
 H^{2}_{\bt}(H_{s+r-1})  &          &               \ldots    &H^{2}_{\bt}(H_2)&  H^{2}_{\bt}(H_1)      &\vdots                                         \\
 H^{3}_{\bt}(H_{s+r-1})  &          &               \ldots    &H^{3}_{\bt}(H_2)&   \vdots                    &\vdots                                         \\
              \vdots            &          &             \vdots      &          \vdots     &   \vdots                    & \vdots                                        \\
H^{r}_{\bt}(H_{s+r-1})   &  \ldots  & H^{r}_{\bt}(H_r)   &          \ldots     &    \ldots                   & \ldots.                                        \\
 \textcolor{blue}{H^{r}_{\bt}(G_{r+s-1})}\ar[r]&\ldots\ar^{\phi}[r]& \textcolor{blue}{H^{r}_{\bt}(G_{r})}\ar[r]\ar@[blue][uuuuuurrr]& \textcolor{blue}{H^{r}_{\bt}(G_{r-1})}\ar[r]&\ldots\ar[r]& \textcolor{blue}{H^{r}_{\bt}(G_{0})}\\
  }
 \end{equation}
Since $H^{r}_{\bt}(G_{r-1})_{[0]}=\ldots=H^{r}_{\bt}(G_{0})_{[0]}=0$, we get $\coker(\phi_{[0]})=(^{\infty}\E_{ver}^{-r,-r})_{[0]}$ which is isomorphic to the homology module of the total complex. By the convergence of the spectral sequence, there exists 
\begin{equation}\label{F1}
\mathcal{F}_{1}\subseteq  \coker(\phi_{[0]}){\text ~~ such~that~~}\coker(\phi_{[0]})/\mathcal{F}_{1}\simeq (^{\infty}\E_{hor}^{0,0})_{[0]}.
\end{equation} 

Notice that $(^{\infty}\E_{hor}^{0,0})_{[0]}\subseteq H^{0}_{\bt}(H_0(G_{\bullet}))_{[0]}\subseteq  H_0(G_{\bullet})_{[0]}$. 
 
According to the structure of $G_{\bullet}$ in \autoref{G}, 
\begin{equation}\label{H0G}
H_{0}(G_{\bullet})=\frac{S}{\im(\delta^{T}_{1}\circ(\bar \delta_{2}^f \otimes 1))+\im(\delta_{1}^{\gamma}\otimes 1)}=\frac{S}{(f_{i}t_{j}-f_{j}t_{i},\gamma_{1},\ldots,\gamma_{s})}.
\end{equation}
In particular, $H_{0}(G_{\bullet})_{[0]}=R$. We define  $d_{1}:=\psi_{[0]}$ where $\psi$ is given by the following chain of compositions.
\begin{equation}\label{psimap}
\psi := H^{r}_{\bt}(G_{r})\xrightarrow{\pi} \coker(\phi)\to \coker(\phi)/\mathcal{F}_{1}\simeq (^{\infty}\E_{hor}^{0,0}) \subseteq H^{0}_{\bt}(H_0(G_{\bullet}))\subseteq  H_0(G_{\bullet}).
\end{equation} 
 Since $\pi\circ d_{2}=0$, we have $d_{1}\circ d_{2}=0$.

Now, we prove properties  $(i)-(iv)$ of the complex $F_{\bullet}$.  Set $\mathcal{T}=\im(\psi)$. Then $\tau=\mathcal{T}_{[0]}$.  Since $H^{r}_{\bt}(G_{r})_{[j]}=0$ for all $j\geq 1$,  we have $(^{\infty}\E_{hor}^{0,0})_{[j]}=0$ for all $j\geq 1$. That is $\mathcal{T}_{[j]}=0$ for $j\geq 1$. \footnote {There are some bonus here,  since  $H^{r}_{\bt}(G_{r-i})_{[j]}=0$ for all $i,j\geq 1$, we have  $(^{\infty}\E_{hor}^{-p,-q})_{[j]}=0$ for all $p<q$ and all $j\geq 1$. Additionally, because  $H_{0}(G_{\bullet})$ is concentrated in positive degrees, we have  $\tau=\mathcal{T}_{[0]}=\mathcal{T}$.}  One then has, $\bt \tau\subseteq \mathcal{T}_{[1]}=0$.

Regarding \autoref{H0G}, we get 
\begin{align*}
\tau &\subseteq (f_{i}t_{j}-f_{j}t_{i},\gamma) :_{R} (\bt) \\
     &\subseteq (f_{i}t_{j}-f_{j}t_{i},\gamma,\mathcal{L}) :_{R} (\bt) 
     = (\gamma,\mathcal{L}) :_{R} (\bt) \\
     &= \frac{(\gamma,\mathcal{L})}{\mathcal{L}} :_{R} \frac{(\bt)}{\mathcal{L}}\subseteq  \frac{(\gamma,\mathcal{L})_{[1]}}{\mathcal{L}_{[1]}} :_{R} \frac{(\bt)_{[1]}}{\mathcal{L}_{[1]}}
     = \fa :_{R} I = J.
\end{align*}

Recall that $\mathcal{L}$ is the defining ideal of the symmetric algebra. It is clear that for all $i$ and $j$, $f_{i}t_{j}-f_{j}t_{i}\in  \mathcal{L}$. The equality in the last lines follows from the same argument as in \autoref{snake}. Hence, the proof of $(i)$ follows. 

$(ii)$. Localizing at $\fp \in \Spec(R)\setminus \V(I)$, we assume that $I=(1)$.  As we see in the proof of \autoref{lem} at \autoref{ZL0}, in the case $I=(1)$,  $L_{\bullet}$ and $\mathcal{Z}_{\bullet}$ are quasi-isomorphic.   For all $i$, $H_{i}(\mathcal{Z}_{\bullet})$ is $\mathcal{L}$-torsion, hence $H_{i}(L_{\bullet})$ and  thus $H_{i}(G_{\bullet})$ is $\mathcal{L}$-torsion. Since $H_{i}(G_{\bullet})$ is as well $(\gamma)$-torsion, these homology modules are $((\gamma)+\mathcal{L})$-torsion. Notice that $(\bt)_{[1]}/(\mathcal{L}+(\gamma))_{[1]}\simeq I/\fa$. The latter is a cyclic module since $I=(1)$. That is, there exists $t_{0}\in (\bt)$ such that $(\bt)=(\mathcal{L}+(\gamma))+St_{0}$. It then follows that $H^{j}_{\bt}(H_{i}(G_{\bullet}))=H^{j}_{t_{0}}(H_{i}(G_{\bullet}))=0$ for all $j\geq 2$. Consequently, $(\E_{hor})_{[0]}$ collapses at the second page and $(^{\infty}\E_{hor}^{0,0})_{[0]}=H^{0}_{\bt}(H_{0}(G_{\bullet}))_{[0]}$.

Since $R$ is local and $I=(1)$ some $f_{i}$ are invertible, say $f_{1} $ is invertible.  In the definig ideal of $H_{0}(G_{\bullet})$, we can replace $t_{j}$ with $f_{j}f_{1}^{-1}t_{1}$, hence $\gamma_{i}=\sum c_{ji}t_{j}=\sum c_{ji}f_{j}f_{1}^{-1}t_{1}=a_{i}f_{1}^{-1}t_{1}$. Therefore, we get an isomorphism 
$$\frac{S}{(f_{i}t_{j}-f_{j}t_{i},\gamma_{1},\ldots,\gamma_{s})}\simeq \frac{R[t_{1}]}{(a_{1}t_{1},\ldots,a_{s}t_{1})}.$$

Since we supposed $f_{1}$ to be invertible, $t_{0}$ in the previous paragraph can be $t_{1}$. Therefore, we have 
$$(^{\infty}\E_{hor}^{0,0})_{[0]}=H_{t_{1}}^{0}(R[t_{1}]/(a_{1}t_{1},\ldots,a_{s}t_{1}))_{[0]}=(a_{1},\ldots,a_{s}).$$
Since $\tau=(^{\infty}\E_{hor}^{0,0})_{[0]}$ the proof is complete.

$(iii).$ Let $\fp\in \Spec(R)\setminus \V(\fa)$. Then $\fa_{\fp}=I_{\fp}=(1)$. \autoref{lem}(ii) then implies that $H^{j}_{\bt}(H_{i}(\G_{\bullet}))_{[0]}=0$ for all $j\geq 0$ and $i>0$, and   $H_{0}(G_{\bullet})_{[0]}=R$. Moreover, since $(^{2}\E_{hor}^{-i,-j})_{[0]}=H^{j}_{\bt}(H_{i}(\G_{\bullet}))_{[0]}=0$ for $i>0$, we have $\tau=(^{\infty}\E_{hor}^{0,0})_{[0]}=H_{0}(G_{\bullet})_{[0]}=R$. Hence $H_{0}(F_{\bullet})=0$.  

Since $(^{2}\E_{hor}^{-i,-j})_{[0]}=0$ for all $j>0$, the filtration generated for $\mathcal{F}_{1}$ by means of $(^{\infty}\E_{hor}^{-i,-j})_{[0]}$ is the zero filtrarion. Therefore $\mathcal{F}_{1}=0$. According to \autoref{psimap}, we then have $\ker(d_{1})=\ker(\psi_{[0]})=\ker (\pi)=\im(d_{2})$. This implies that $H_{1}(F_{\bullet})=0$.

 That $H_{i}(F_{\bullet})=0$ for all $i>1$,  follows from the convergence of spectral sequences. 

$(iv)$. This part is a consequence of $(iii)$ since $H_{0}(F_{\bullet})=R/\tau$.
\end{proof}
By \autoref{F},  it is clear that ${\sqrt \fa}\subseteq {\sqrt \tau}\subseteq {\sqrt J}$. Apparently, $\tau$ is closer to $\fa$ than $J$. However,  we see in the following proposition that for special ideals $I$, it goes more to the right. 

\begin{Proposition}\label{radical} Let $R$ be a Noetherian ring of dimension
$d$ and $I$ contains a regular element. Consider the  complex $F_{\bullet}$ defined in \autoref{F} and $\tau=\im(d_{1})$. Let $\Ht(J)=\alpha$. Suppose that $s\geq r$ and  $I$ is $r$-minimally generated from height $\alpha-1$. Then ${\sqrt \tau}={\sqrt J}$.
\end{Proposition}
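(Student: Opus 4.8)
The plan is to establish the two inclusions $\V(J)\subseteq\V(\tau)$ and $\V(\tau)\subseteq\V(J)$ separately. The first is immediate from $\tau\subseteq J$ (\autoref{F}(i)). For the second I would use that $J=\fa:_RI=\Ann_R(I/\fa)$, so that for a prime $\fp$ one has $\fp\in\V(J)$ if and only if $I_\fp\neq\fa_\fp$; hence $\V(\tau)\subseteq\V(J)$ is equivalent to the assertion that $\tau_\fp=R_\fp$ for every prime $\fp$ with $I_\fp=\fa_\fp$. If such a $\fp$ does not contain $I$ then $I_\fp=R_\fp$, whence $\fa_\fp=R_\fp$ and \autoref{F}(ii) gives $\tau_\fp=J_\fp=\fa_\fp=R_\fp$. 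So the whole problem reduces to showing $\tau_\fp=R_\fp$ for every prime $\fp\supseteq I$ with $I_\fp=\fa_\fp$.

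Fix such a $\fp$ and localize; now $R$ is local, $I=\fa$, $s\geq r$ and $\mu(I)\leq r$. The favourable case is $\mu(I_\fp)=r$. Then \autoref{I=a}(ii) yields $(\gamma_1,\dots,\gamma_s)=(\bt)$, and combined with \autoref{H0G} this gives $H_0(G_\bullet)=S/(f_it_j-f_jt_i,\bt)=R$, concentrated in degree $0$. Feeding in \autoref{lem}(i), which kills $H^j_\bt(H_i(G_\bullet))_{[0]}$ for all $j$ and all $i>0$, and running the horizontal and vertical spectral sequences of $\check C^{\bullet}_\bt\otimes_SG_\bullet$ exactly as in the proof of \autoref{F}(iii), one finds that the horizontal spectral sequence collapses to its single entry $H^0_\bt(H_0(G_\bullet))_{[0]}=R$; hence $\tau_\fp=R_\fp$, as wanted. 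Consequently the only primes $\fp$ where $\tau_\fp=R_\fp$ might still fail are those with $\fp\supseteq I$, $I_\fp=\fa_\fp$ and $\mu(I_\fp)<r$, and for these the hypothesis that $I$ is $r$-minimally generated from height $\alpha-1$ forces $\Ht(\fp)\leq\alpha-2$.

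It therefore remains to rule out primes of $\V(\tau)$ of height $\leq\alpha-2$; since $\V(\tau)\subseteq\V(\fa)=\V(I)\cup\V(J)$ and every prime of $\V(J)$ has height $\geq\alpha$, this amounts exactly to proving $\Ht(\tau)\geq\alpha-1$, and I expect this to be the main obstacle. A natural line is an ascending induction on height: assuming $\tau_\fq=R_\fq$ for every prime $\fq$ with $I_\fq=\fa_\fq$ and $\Ht(\fq)\leq h$, a minimal counterexample $\fp$ of height $h+1\leq\alpha-2$ must be a minimal prime of $\tau$ — any prime strictly inside $\fp$ still has $I_\fq=\fa_\fq$, because $\Supp(I/\fa)=\V(J)$ sits above height $\alpha$, and so is covered by the inductive hypothesis — whence $R_\fp/\tau_\fp$ has dimension $0$. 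Localizing the construction of \autoref{F} at $\fp$ and using the vanishing $H^{*}_\bt(H_i(G_\bullet))_{[0]}=0$ for $i>0$ that is available whenever $I_\fp=\fa_\fp$, one obtains from $F_\bullet$ a bounded complex of finitely generated free $R_\fp$-modules with nonzero $H_0=R_\fp/\tau_\fp$; the New Intersection Theorem, in the sharpened form used for \autoref{PCM}, should then produce an inequality between $\dim R_\fp=\Ht(\fp)$ and the numerical data of the complex that contradicts $\Ht(\fp)\leq\alpha-2$, closing the induction.

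In summary, the two straightforward inputs are \autoref{F}(ii) away from $\V(I)$ and the local computation via \autoref{I=a}(ii) and \autoref{lem}(i) at primes where $\mu(I_\fp)=r$; the delicate heart of the proof is the height estimate $\Ht(\tau)\geq\alpha-1$, for which the New Intersection Theorem applied to $F_\bullet$ localized at a minimal prime of $\tau$ does the work, and the point where I would need to be most careful is pinning down the support of the higher homology of $F_\bullet$ closely enough for the sharp form of that theorem to bite. The hypothesis that $I$ is $r$-minimally generated precisely from height $\alpha-1$ is what ensures the favourable case $\mu(I_\fp)=r$ already covers every prime of height $\geq\alpha-1$, leaving only the low-height primes to be handled by the dimension count.
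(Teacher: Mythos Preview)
Your overall strategy matches the paper's: $\V(J)\subseteq\V(\tau)$ is \autoref{F}(i), and for the reverse you show $\tau_\fp=R_\fp$ whenever $I_\fp=\fa_\fp$, splitting into $\fp\notin\V(I)$ (handled by \autoref{F}(ii)) and $\fp\in\V(I)$ with $\mu(I_\fp)=r$ (handled via \autoref{lem}(i)). The paper organizes this a bit differently: it first argues $\Ht(\tau)\geq\alpha$ by checking primes of height $\alpha-1$, and only then runs the case split for primes $\fq\supseteq\tau$, which now automatically have $\Ht(\fq)\geq\alpha$ and hence $\mu(I_\fq)=r$ whenever $\fq\supseteq I$. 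No New Intersection Theorem is used.

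Your proposed treatment of the remaining low-height primes via the New Intersection Theorem does not work, for two reasons. First, the vanishing $H^j_{\bt}(H_i(G_\bullet))_{[0]}=0$ for $i>0$ is \emph{not} ``available whenever $I_\fp=\fa_\fp$'': \autoref{lem} establishes it only under the extra hypothesis $\mu(I_\fp)=r$ (case (i)) or $I_\fp=(1)$ (case (ii)), and the primes you are trying to handle satisfy neither. Your inductive hypothesis gives $H_0(F_\bullet)_\fq=0$ on the punctured spectrum of $R_\fp$ but says nothing about the higher $H_i(F_\bullet)_\fq$, so you cannot check that $(F_\bullet)_\fp$ has finite-length homology. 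Second, even if you could, the New Intersection Theorem applied to a length-$s$ free complex yields only $\dim R_\fp\leq s$, an \emph{upper} bound on $\Ht(\fp)$; what you need to close the induction is the \emph{lower} bound $\Ht(\fp)\geq\alpha-1$, which NIT cannot deliver. In fact, if the vanishing you claim actually held at such a $\fp$, the spectral-sequence argument of \autoref{F}(iii) would directly give $(F_\bullet)_\fp$ exact and hence $\tau_\fp=R_\fp$ --- no NIT needed --- so the whole difficulty is precisely that this vanishing is unavailable when $\mu(I_\fp)<r$.
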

\begin{proof} We first show that $\Ht(\tau)\geq \alpha$.  Let $\fp$ be a prime ideal of height $\alpha-1$. Then $ J \not\subseteq \fp$ because  $\Ht(J)=\alpha$ by assumption. 

Since $ J \not\subseteq \fp$, we have $\fa_{\fp}=I_{\fp}$. There are two cases, if $\fp\not\supseteq I$, then $\fa_{\fp}=I_{\fp}=(1)$. According to \autoref{lem}(ii), the convergence of spectral sequences in the proof of \autoref{F} implies that $F_{\bullet}$ is exact. In particular, $H_{0}(F_{\bullet})_{\fp}=0$, that is $\fp\not\supseteq \tau$.

 If $\fp\supseteq I$, then $\mu(I_{\fp})=r$, $s\geq r$ and $I_{\fp}=\fa_{\fp}$. Then  \autoref{lem}(i) infers that $F_{\bullet}$ is exact, thence $\fp\not\supseteq \tau$. Therefore  any  prime ideal of height $\alpha-1$ does not contain $\tau$, hence $\Ht(\tau)\geq \alpha$.

Now, let $\fq$ be a prime ideal that contains  $\tau$. There are two cases, if  $\fq\not\supseteq I$, then \autoref{F}(ii), implies that  $\tau_{\fq}=J_{\fq}$ that is  $\fq\supseteq J$. 

If $\fq\supseteq I$ then  $\mu(I_{\fq})=r$, since $I$ is $r$-minimally generated from height $\alpha-1$ and $\Ht(\fq)\geq \alpha$ . If by contrary, $\fq\not\supseteq J$ then $I_{\fq}=\fa_{\fq}$.  Then again \autoref{lem}(i) implies that $\tau_{\fq}=R_{\fq}$ which is a contradiction. Hence in each case $\fq\supseteq \tau $ implies that $\fq\supseteq J$,  equivalently ${\sqrt \tau}\supseteq {\sqrt J}$. On the other hand,  one always has ${\sqrt \tau}\subseteq {\sqrt J}$ which completes the proof. 
\end{proof}

We are now, ready to state the main theorem of this section. That, residual intersections of $r$-minimally generated ideals admit  free approaches.
\begin{Theorem}\label{residual1}
Let $R$ be a Noetherian ring, $\fa=(a_{1},\ldots,a_{s})$ a sub-ideal of $I$ and $J=\fa:I$. Assume $I$ contains a regular element and is $r$-minimally generated from height $s-1$. If $\Ht(J)\geq s$ , i.e. $J$ an algebraic $s$-residual intersection of $I$, and $s\geq r$  then
\begin{itemize}
\item[(i)]{ $\Ht(J)=s$;}
\item[(ii)]{Let $t$ be an integer and assume that $\depth(R_{\fp})\geq s$ for all $\fp$ with $s\leq \Ht(\fp)\leq t$. Then for any  $\fp\in \min(J)$, either $\Ht(\fp)=s$ or $\Ht(\fp)\geq t+2$;}
\item[(iii)]{If $R$ satisfies the Serre's condition S$_{s}$ then $J$ admits a free approach.}
\end{itemize}
\end{Theorem}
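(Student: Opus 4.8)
The proof proposal is to run all three parts through the complex $F_{\bullet}$ constructed in \autoref{F} together with the radical identification in \autoref{radical}, using the new intersection theorem (in the form of \autoref{PCM}) to harvest height and depth information once acyclicity is established locally. The overall strategy: \textbf{part (iii)} builds the free approach, \textbf{part (i)} is then essentially a consequence of \autoref{PCM}(i) once we know $F_{\bullet}$ is a free approach, and \textbf{part (ii)} is a refinement of the local-acyclicity argument that tracks \emph{where} $F_{\bullet}$ can fail to be acyclic.

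For \textbf{part (iii)}, I would argue as follows. By \autoref{radical}, since $s\geq r$ and $I$ is $r$-minimally generated from height $\Ht(J)-1$ (which by monotonicity follows from being $r$-minimally generated from height $s-1$, once we know $\Ht(J)=s$ — so part (i) should be proved first, or simultaneously), we have $\sqrt\tau=\sqrt J$ with $\tau=\im(d_1)\subseteq J$. It remains to show $F_{\bullet}:0\to F_s\to\cdots\to F_1\xrightarrow{d_1}R\to 0$ is acyclic and that $\Ht(J)=s$, so that $F_{\bullet}$ has exactly the length $s=\Ht(J)$ demanded by \autoref{DFA}. Acyclicity I would check by the depth/acyclicity lemma (Peskine–Szpiro acyclicity criterion): localize at a prime $\fp$. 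If $\fp\not\supseteq\fa$ then \autoref{F}(iii) already gives $(F_{\bullet})_\fp$ acyclic; if $\fp\not\supseteq I$ but $\fp\supseteq\fa$, \autoref{lem}(ii) plus the spectral-sequence collapse from the proof of \autoref{F} gives acyclicity; if $\fp\supseteq I$ with $\Ht(\fp)\geq s$ and additionally $I_\fp=\fa_\fp$, then $\mu(I_\fp)=r\leq s$ and \autoref{lem}(i) gives $(F_{\bullet})_\fp$ acyclic (indeed $\tau_\fp=R_\fp$). The remaining primes $\fp\supseteq I+J$ — where we cannot conclude acyclicity directly — all have $\Ht(\fp)\geq s$, and here the Serre condition $S_s$ guarantees $\depth R_\fp\geq\min(s,\Ht(\fp))=s$; combined with the bound on the ranks/twists in \autoref{G} and \autoref{ZL} (the complex $F_{\bullet}$ has length $s$ and its modules are free of the expected ranks), the acyclicity lemma — in the form that a complex $0\to F_s\to\cdots\to F_1\to F_0$ of free modules with $\depth I(\varphi_i)\geq i$ on the non-exact locus — yields acyclicity globally. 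Concretely I expect to invoke: $F_{\bullet}$ is acyclic iff for each $i$, $\depth(I_{\rho_i}(d_i),R)\geq i$ where $\rho_i$ is the expected rank; the non-acyclicity locus is contained in $\V(I+J)$, and on $\V(I+J)$ we have $\Ht\geq s$ and $S_s$ gives enough depth.

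For \textbf{part (i)}: once $F_{\bullet}$ is a bounded free complex of length $s$ with $H_0=R/\tau$, $\sqrt\tau=\sqrt J$, and $\Ht(J)\geq s$ by hypothesis, $\tau$ is an ideal of finite projective dimension $\leq s$ with $\Ht(\tau)=\Ht(J)\geq s$; \autoref{PCM}(i),(iii) force $\Ht(\fp)=s$ for every $\fp\in\min(\tau)=\min(J)$, hence $\Ht(J)=s$. (There is a mild circularity to resolve: \autoref{radical} as quoted needs $I$ to be $r$-minimally generated from height $\Ht(J)-1$; since $\Ht(J)\geq s$, being $r$-minimally generated from height $s-1$ suffices by the monotonicity of the defining condition in $\zeta$. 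And the bound $\pdim_{R_\fp}(R/\tau)_\fp\le s$ only needs the length of $F_\bullet$, not yet the value of $\Ht J$ — so no genuine circularity.)

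For \textbf{part (ii)}: I would refine the local acyclicity analysis. Take $\fp\in\min(J)$. If $\fp\not\supseteq I$ this is impossible for $\fp\in\min(J)$ unless — actually one must have $\fp\supseteq I$ is not forced, but if $\fp\not\supseteq I+J$... the point is: if $\Ht(\fp)\leq t$, then by hypothesis $\depth R_\fp\geq s$, and for primes of height between $s$ and $t$ the acyclicity lemma applies to $(F_\bullet)_\fp$ exactly as in part (iii) (the only obstruction was depth, now supplied), so $(F_\bullet)_\fp$ is acyclic, forcing $\pdim(R/\tau)_\fp\leq s$; but $(R/\tau)_\fp\neq 0$ and $\fp\in\min(\tau)$, so $\dim R_\fp\leq s$ by the new intersection theorem (\autoref{PCM}(i) applied in $R_\fp$), i.e. $\Ht(\fp)=s$. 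Hence any $\fp\in\min(J)$ with $s<\Ht(\fp)\leq t$ is excluded; I still must rule out $\Ht(\fp)=t+1$, which I expect to get by the same argument applied at a height-$(t+1)$ prime: $S$-conditions don't give it for free, but the acyclicity lemma needs $\depth R_\fp\geq s$ which holds if $\fp$ sits over a smaller prime with enough depth... this is the delicate point — I'd handle the jump to $t+2$ by noting that if $\Ht(\fp)=t+1$ and $\fp\in\min(J)$ then every prime strictly inside $\fp$ containing $I+\fa$ has height $\leq t$ hence depth $\geq s$, which is exactly the input the local acyclicity lemma needs to conclude $(F_\bullet)_\fp$ acyclic, again forcing $\Ht(\fp)=s$, a contradiction.

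\textbf{Main obstacle.} The crux is the acyclicity lemma bookkeeping in part (iii): identifying precisely the non-acyclicity locus of $F_{\bullet}$ as contained in $\V(I+J)$ and checking that the twists/ranks in \autoref{G}–\autoref{ZL} make the expected-rank ideals $I_{\rho_i}(d_i)$ have the right height there, so that $S_s$ (depth $\geq s$ on $\V(I+J)$, since $\Ht(I+J)\geq s$) is exactly enough. Part (ii) is a variation of the same bookkeeping with the depth hypothesis replacing $S_s$ up to height $t$, and the $+2$ (rather than $+1$) gap is the subtle output of the acyclicity lemma needing depth on all \emph{proper} sub-primes rather than at $\fp$ itself.
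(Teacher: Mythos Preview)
Your overall architecture is right, and part (iii) is essentially the paper's argument: local exactness of $F_\bullet$ at primes of height $\leq s-1$ via \autoref{lem}, then the Peskine--Szpiro acyclicity lemma climbed inductively using $S_s$. But there is a genuine gap in parts (i) and (ii).

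\textbf{Part (i) must be proved with no depth hypothesis.} You deduce (i) from \autoref{PCM}, which needs $F_\bullet$ to be acyclic, hence needs (iii), hence needs $S_s$. But (i) carries no such assumption. Your parenthetical claim that ``$\pdim_{R_\fp}(R/\tau)_\fp\le s$ only needs the length of $F_\bullet$'' is false: a bounded free complex of length $s$ gives a projective-dimension bound only once it is a resolution. The paper avoids this by applying the new intersection theorem in its stronger form, for complexes with \emph{finite-length homology} rather than for acyclic complexes. Concretely: if $\Ht(J)=s+i$, pick $\fq\in\min(J)$ of height $s+i$. Every proper subprime $\fp\subsetneq\fq$ fails to contain $J$, so $I_\fp=\fa_\fp$, and \autoref{lem} (cases (i) or (ii)) gives $(F_\bullet)_\fp$ exact. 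Thus all $H_j(F_\bullet)_\fq$ have finite length, $H_0(F_\bullet)_\fq=(R/\tau)_\fq\neq 0$, and the new intersection theorem forces $s\geq\dim R_\fq=s+i$.

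\textbf{The same issue recurs at height $t+1$ in part (ii).} You write that exactness on the punctured spectrum ``is exactly the input the local acyclicity lemma needs to conclude $(F_\bullet)_\fp$ acyclic.'' It is not: Peskine--Szpiro requires $\depth(F_i)_\fp\geq i$, i.e.\ $\depth R_\fp\geq s$, and at height $t+1$ you have no such bound. What you \emph{do} have, after the inductive step through height $t$, is that $(F_\bullet)_\fp$ is exact on the punctured spectrum, so all homologies have finite length; then the new intersection theorem gives $s\geq\dim R_\fp=t+1$, a contradiction. This is exactly how the paper gets the jump to $t+2$ rather than $t+1$: it is a new-intersection step, not an acyclicity-lemma step.

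In short, your proposal conflates two tools. \autoref{PCM} and the acyclicity lemma are downstream of acyclicity; for (i) and for the final exclusion in (ii) you must invoke the new intersection theorem directly on a complex whose homology is merely finite-length.
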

\begin{proof}$(i)$.  Let $\Ht(J)=s+i$. For any prime $\fp$ with $s-1\leq \Ht(\fp)\leq (s+i-1)$, we have $I_{\fp}=\fa_{\fp}$. By a similar argument to that in the proof of \autoref{radical}, we see that $(F_{\bullet})_{\fp}$ is exact for all such $\fp$. Now, let $\fq\supseteq J$ and $\Ht(\fq)=s+i$. Since ${\sqrt \tau}={\sqrt J}$ by \autoref{radical}, $H_{0}(F_{\bullet})_{\fq}=(R/\tau)_{\fq}$ is non-zero and of  finite length. On the other hand, $(F_{\bullet})_{\fq}$ is exact on the punctured spectrum. Therefore $H_{i}(F_{\bullet})_{\fq}$ are all of finite length. The new intersection theorem then implies that the length of the complex $(F_{\bullet})_{\fq}$ is at least the dimension of the ring. That is $s\geq \dim(R_{\fq})=s+i$. Hence $i=0$.

$(ii)$. Let  $\fp$ be a prime ideal with  $\Ht(\fp)=s$. Then \autoref{lem}(i) implies that $(F_{\bullet})_{\fp}$ is exact on the punctured spectrum. Since the length of the complex is $s$ and the depth of each component is at least $s$, Peskine-Szpiro's acyclicity Lemma \cite[1.4.24]{bruns1998cohen} implies that $(F_{\bullet})_{\fp}$  is acyclic.  We then inductively, show that $(F_{\bullet})_{\fp}$ is acyclic  for any prime ideal $\fp$ with $\Ht(\fp)\leq t$.  Hence the new intersection theorem implies that  $(R/\tau)_{\fp}$ is not of finite length. Hence  any prime ideal $\fp$ with $s<\Ht(\fp)\leq t$ does not belong to $\min(\tau)=\min(J)$. Now let $\fq$ be a prime ideal of height $t+1$. Since $(F_{\bullet})_{\fq}$ is acyclic on the punctured spectrum and $ \dim(R_{\fq})>s$,  once more, the new intersection theorem implies that $\fq\not\in \min(J)$.  That completes the proof of part $(ii)$.

$(iii)$. The proof of this part is the same as part $(ii)$. We use \autoref{lem}  to see that $(F_{\bullet})_{\fp}$ is exact  at height $s-1$ or less. Then we use the acyclicity Lemma inductively, appealing to the condition of the depth of the ring. Thus $F_{\bullet}$ is an acyclic complex of finite free modules of length $s=\Ht(J)$, $F_{0}=R$ and $H_{0}(F_{\bullet})=R/\tau$ with $\tau\subseteq J$ and ${\sqrt \tau}={\sqrt J}$, according to \autoref{F}(i) and \autoref{radical}. 
\end{proof}
Although the next result, \autoref{geometric},  applies for complete intersections, we wonder if the following question has an affirmative answer.
\begin{Question} In a ring $R$, let $I$ be an  $r$-minimally generated ideal that is not a complete intersection. Does $I$  admit any geometric $s$-residual intersections for some $s\geq r$?
\end{Question}
For instance, let $R=\mathbb{C}[x_{0},\ldots,x_{5}]/(x_{0}^{2},x_{1}^{2})$, then $I=(x_{0},x_{1},x_{2},x_{3})$ is $4$-minimally generated ideal from height $2$. Observe  $I$ does not admit any $3$-residual intersections, since $\mu(I_{\fp})=4$ for all $\fp\supseteq I$. We still do not know if there exist $a_{1},\ldots,a_{4}\in I$ such that $\Ht((a_{1},\ldots,a_{4}):I)\geq 4$.
\begin{Corollary}\label{geometric} Let $R$ be a Noetherian ring satisfying the Serre's condition S$_{s+1}$, $\fa=(a_{1},\ldots, a_{s})$ a sub-ideal of $I$ and $J=\fa:I$. Assume $I$ contains a regular element and is $r$-minimally generated from height $s-1$. If  $J$ is a geometric  $s$-residual intersection of $I$, and $s\geq r$  then $J$ is an unmixed ideal with the finite free resolution given by complex $F_{\bullet}$. 
\end{Corollary}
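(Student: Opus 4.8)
The plan is to derive Corollary \ref{geometric} directly from Theorem \ref{residual1} by exploiting the extra geometric hypothesis $\Ht(I+J)\geq s+1$. First I would invoke Theorem \ref{residual1}: since $R$ satisfies $S_{s+1}$ it certainly satisfies $S_{s}$, so $J$ admits a free approach given by the complex $F_{\bullet}$ of \autoref{F}, which has length $s=\Ht(J)$ with $F_{0}=R$, $H_{0}(F_{\bullet})=R/\tau$, $\tau\subseteq J$ and $\sqrt{\tau}=\sqrt{J}$. In particular, by \autoref{PCM}(ii) applied to the ideal $\tau$ of height $s$, $\depth(R_{\fp})\geq s$ for every prime $\fp\supseteq J$, and $R_{\fp}$ is Cohen--Macaulay for every $\fp\in\min(J)$.

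Next I would upgrade $F_{\bullet}$ from a mere free approach to an actual resolution of $R/J$ by showing $\tau=J$ under the geometric hypothesis. The point is that $\tau$ and $J$ agree away from $\V(I)$ by \autoref{F}(ii), so $J/\tau$ is supported on $\V(I+J)$, hence $\Ht(\Ann(J/\tau))\geq s+1$ in the geometric case. On the other hand, since $F_{\bullet}$ is acyclic of length $s$ and $F_{0}=R$, the module $R/\tau$ is perfect of grade $s$, so it is unmixed: every associated prime of $R/\tau$ has height exactly $s$. This is where the main work lies — I would argue that if $\tau\subsetneq J$ then some associated prime $\fp$ of $R/\tau$ (necessarily of height $s$) would have to lie in $\Supp(J/\tau)\subseteq \V(I+J)$, forcing $\Ht(\fp)\geq s+1$, a contradiction; therefore $\tau=J$ and $F_{\bullet}$ is a finite free resolution of $R/J$. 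A cleaner route to the same conclusion: localize at any $\fp\in\min(J)$; either $\fp\not\supseteq I$, in which case $\tau_{\fp}=J_{\fp}$ by \autoref{F}(ii), or $\fp\supseteq I$, but then $\fp\supseteq I+J$ so $\Ht(\fp)\geq s+1$, contradicting Theorem \ref{residual1}(i) which gives $\Ht(J)=s$ and hence $\min(J)\subseteq\V_{s}$. So every minimal prime of $J$ lies outside $\V(I)$, and combined with the unmixedness of $R/\tau$ one concludes $\Ass(R/\tau)=\min(J)$, all primes outside $\V(I)$, on which $\tau$ and $J$ coincide; thus $\tau=J$.

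Finally, unmixedness of $J$ itself follows: $J=\tau$ and $R/\tau$ is a perfect module of grade $s$, hence Cohen--Macaulay in the sense that $\Ass(R/J)=\min(J)$ with all these primes of height exactly $s$ (using \autoref{PCM}(i)). The finite free resolution claimed is precisely $F_{\bullet}:0\to F_{s}\to\cdots\to F_{1}\xrightarrow{d_{1}} R\to 0$ from \autoref{F}, whose acyclicity was established in the proof of Theorem \ref{residual1}(iii) using the Peskine--Szpiro acyclicity lemma and the $S_{s}$ (a fortiori $S_{s+1}$) hypothesis. The main obstacle is the step identifying $\tau$ with $J$: one must carefully combine the perfectness (hence unmixedness) of $R/\tau$ coming from the acyclic complex with the geometric height condition $\Ht(I+J)\geq s+1$ to rule out embedded or extraneous components, and ensure no minimal prime of $J$ is hidden inside $\V(I)$.
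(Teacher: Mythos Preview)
Your approach is essentially the paper's: invoke Theorem~\ref{residual1}(iii) to get $F_{\bullet}$ acyclic resolving $R/\tau$, establish that $\tau$ is unmixed of height $s$, then use the geometric hypothesis together with \autoref{F}(ii) to see that every associated prime of $\tau$ lies outside $\V(I)$ and hence $\tau_{\fp}=J_{\fp}$ there, giving $\tau=J$.

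One point needs tightening. The assertion ``$R/\tau$ is perfect of grade $s$, so it is unmixed'' is not valid in a ring that merely satisfies $S_{s+1}$; perfection alone does not force unmixedness outside the Cohen--Macaulay setting. The paper makes the $S_{s+1}$ hypothesis do the work explicitly: for $\fp$ with $\Ht(\fp)\geq s+1$ one has $\depth(R_{\fp})\geq s+1$, and since $(F_{\bullet})_{\fp}$ is acyclic of length $s$, Auslander--Buchsbaum gives $\depth(R/\tau)_{\fp}\geq 1$, so $\fp\notin\Ass(R/\tau)$. Your ``cleaner route'' and the final paragraph are correct, but you should replace the bare appeal to perfection with this depth computation.
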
 
\begin{proof}  By \autoref{residual1}(iii), the complex $F_{\bullet}$ is acyclic and resolves $R/\tau$. For any prime ideal $\fp$ with $\Ht(\fp)\geq s+1$, we have $\depth(R_{\fp})\geq s+1$. The acyclicity of the complex $(F_{\bullet})_{\fp}$ shows that $\depth(R/\tau)_{\fp}>0$. Hence $\fp\not\in \Ass(R/\tau)$. In particular, $\tau$ is an unmixed ideal of height $s$. We proceed to show that $\tau=J$ in the case where $J$ is a geometric $s$-residual intersection of $I$.  Let $\fp\in \Ass(\tau)$, then $\Ht(\fp)\leq s$, moreover  $\fp\supseteq J$ because  $\sqrt{\tau}=\sqrt{J}$ by \autoref{radical}. Since $J$ is a geometric $s$-residual intersection of $I$, $\fp\not \in\V(I)$. Thus \autoref{F}(ii) implies that $\tau_{\fp}=J_{\fp}$. Then $\tau=J$. The result follows from the fact that $\Ht(\tau)=\Ht(J)=s$, for any $\fp\in \Ass(\tau)$,  $\Ht(\fp)= s$, and that $\tau$ is resolved by $F_{\bullet}$ by \autoref{residual1}(iii).
\end{proof}
\begin{Remark} An important advantage of \autoref{geometric}, compared to similar results on residual intersections of complete intersections \cite{UlrichArtin, HamidNaeliton, boucca2019residual, CHU}, is that in this context, the base ring \( R \) is only required to satisfy \( \text{S}_{s+1} \) and not necessarily be Cohen-Macaulay.

\end{Remark}
\subsubsection{multiplicity  of residual intersections}\label{subsec}

Since residual intersections serve as an algebraic tool in enumerative questions, finding an explicit formula for the Hilbert polynomial of \( R/J \), or at least an upper bound for the multiplicity, as a function of \( d \), \( s \), \( r \), \( \deg f_i \)'s,  \( \deg t_i \)'s and the Hilbert polynomial of $R$ is a significant goal in the theory of residual intersections.

The works of Chardin-Eisenbud and Ulrich \cite{CHU} and \cite{ceu} established that such bounds must exist. Further extensions were made in \cite{boucca2019residual} and \cite{HamidNaeliton}. 

So far, the complete intersection is the only class of ideals \( I \) for which the Hilbert function of an \( s \)-residual intersection \( J \) is computable in terms of the aforementioned parameters. Although this fact is known to experts, it is not well-documented in the literature. However, one can find the formula for the Hilbert function in Ph.D. thesis \cite{KevinThesis}.

Assume that
$ R=\bigoplus _{n \geq 0} R_{n}$ is a
 positively graded *local Noetherian ring of dimension $d$ over an Artinian local ring $
(R_{0}, \fm_{0})$ and  set $\fm=\fm_{0}+R_+$. Suppose that $I$ and
$\fa$ are homogeneous ideals of $R$ generated by homogeneous
elements $f_1,\ldots,f_r$ and $a_1,\ldots,a_s$, respectively.
 Let $\deg f_t=d_t$ for all $1 \leq t \leq r$ and $\deg a_{t}=l_{t}$ for $1\leq t \leq
s$. 
\begin{Proposition}\label{multiplicity} Keeping the above notation, assume that $f_{1},\ldots,f_{r}$ is a regular sequence and $R$ is a Cohen-Macaulay ring, and $J=\fa:I$ is an $s$-residual intersection. Then the Hilbert function of $R/J$ is a function of $d$, $s$, $r$, $d_i$'s, $l_i$'s  and the Hilbert function of $R$. We call the multiplicity of such a residual intersection {\bf ericci}. 
\end{Proposition}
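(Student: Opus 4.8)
The plan is to show that when $f_1,\dots,f_r$ is a regular sequence and $R$ is Cohen–Macaulay, the ideal $I$ is a complete intersection and hence is automatically $r$-minimally generated from any height, so the machinery of \autoref{residual1} applies and produces the explicit free approach complex $F_\bullet$; then one reads the Hilbert function of $R/J$ off that complex. Concretely, first I would observe that since $f_1,\dots,f_r$ is a regular sequence, the Koszul complex $K_\bullet(f;R)$ is the minimal free resolution of $R/I$, all the Koszul cycles $Z_i(f;R)$ are free (in fact $Z_i(f;R)=B_i(f;R)$ are syzygy modules with known ranks, namely $\operatorname{rank} Z_i = \binom{r-1}{i}$), and consequently the approximation complex $\mathcal{Z}_\bullet$, the complex $L_\bullet$ of \autoref{Lcomplex}, and therefore the complex $G_\bullet=\Tot(L_\bullet\otimes_S K_\bullet(\gamma;S))$ all have completely explicit free modules with explicit twists — the $n_{ij}$ in \autoref{G} become explicit binomial expressions in $r$ and $s$, and the internal degree shifts are linear combinations of the $d_i$'s and $l_i$'s (the $l_i$ entering through $\deg\gamma_i = l_i$, since $\gamma_i$ has $t$-degree $1$ but the relation comes from $a_i$ of degree $l_i$, so the homogenizing forces the grading).

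Next I would invoke \autoref{residual1}(iii): since $R$ is Cohen–Macaulay it satisfies $S_s$, and a complete intersection $I$ is $r$-minimally generated from every height (in particular from height $s-1$), with $r = g = \Ht(I)\le s$; hence $F_\bullet$ is an acyclic complex of finite free modules of length $s=\Ht(J)$ resolving $R/\tau$ with $\sqrt\tau=\sqrt J$. Moreover in the complete intersection case one expects $\tau=J$ on the nose — either because $J$ is unmixed (\autoref{residual1}(ii) plus the $S_s$ hypothesis forces $\min(J)$ to have pure height $s$, and the local isomorphism $\tau_\fp=J_\fp$ off $\V(I)$ of \autoref{F}(ii) together with genericity of $I$ among complete intersections pins down the embedded behaviour), or by directly citing the literature on residual intersections of complete intersections where $J$ itself is resolved by this type of complex. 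Then $F_\bullet$ is the graded free resolution of $R/J$, so
\begin{equation*}
\HF_{R/J}(n) = \sum_{i=0}^{s} (-1)^i \sum_{j} n_{ij}\, \HF_{R}\bigl(n - \sigma_{ij}\bigr),
\end{equation*}
where the ranks $n_{ij}$ and the internal shifts $\sigma_{ij}$ (built from the $d_t$'s and $l_t$'s via the explicit description of $G_\bullet$ and the local cohomology functor $H^r_\bt(-)_{[0]}$ of \autoref{ZL}) depend only on $d$, $s$, $r$, the $d_t$'s and the $l_t$'s. This exhibits $\HF_{R/J}$ as the claimed function of those parameters and of $\HF_R$, which is exactly the assertion.

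The main obstacle I anticipate is twofold. First, bookkeeping: extracting $H^r_\bt(G_\bullet)_{[0]}$ and tracking all the degree shifts through the double complex \autoref{black}, the tensor with $K_\bullet(\gamma;S)$, and the local cohomology (which contributes the shift $\End(H^r_\bt(S))=-r$ together with the $\deg t_i$'s) requires care, though it is mechanical once the $Z_i$ are free. Second, and more delicate, is justifying that $\tau = J$ (not merely $\sqrt\tau=\sqrt J$) so that $F_\bullet$ genuinely resolves $R/J$ rather than some subideal with the same radical; in the complete-intersection case this should follow from unmixedness of $J$ (a standard fact for residual intersections of complete intersections under a Cohen–Macaulay hypothesis) combined with \autoref{F}(ii), but if one only wants the Hilbert \emph{polynomial} (equivalently the multiplicity) one may sidestep this by noting that $R/\tau$ and $R/J$ agree on the punctured spectrum near every minimal prime, so they have the same multiplicity regardless — which already suffices for the enumerative applications and for the name "ericci."
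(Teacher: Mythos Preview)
Your approach is closely related to the paper's but takes a detour that the paper avoids. You route through \autoref{residual1}(iii) to produce the acyclic complex $F_\bullet$ resolving $R/\tau$, and then confront the problem of showing $\tau = J$, which you correctly flag as the main obstacle. The paper sidesteps this entirely: it invokes \cite{boucca2019residual} for the fact that, when $I$ is a complete intersection in a Cohen--Macaulay ring, one has $J = \Kitt(\fa,I)$ and this ideal is resolved by the $\mathcal{Z}^{+}_\bullet$-complex of that reference. Since the Koszul cycles $Z_i$ are free in the complete intersection case, the terms of $\mathcal{Z}^{+}_\bullet$ are explicit free $R$-modules with shifts governed by the $d_i$'s, $l_i$'s, $r$, $s$; the Hilbert function of $R/J$ is then read off directly, and the paper records that the resulting alternating sum coincides with $\sum_{i=0}^{s}(-1)^i \HF_{F_i}(n)$.

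Your proposed fix of ``citing the literature'' is the same external input, applied one step later; with that citation your argument goes through. Your unmixedness argument, however, does not close the gap by itself: \autoref{F}(ii) only gives $\tau_\fp = J_\fp$ for $\fp \notin \V(I)$, and when $\fp \in \min(J)\cap \V(I)$ (which can occur when the residual is not geometric) you have no control over $\tau_\fp$ without further input. Your fallback of settling for the multiplicity alone would prove strictly less than the proposition asserts, since the claim is about the full Hilbert function. In short, your plan is correct modulo the external reference, which is exactly what the paper uses --- but the paper uses it up front to resolve $R/J$ itself, rather than resolving $R/\tau$ and then bridging.
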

\begin{proof} Under the mentioned condition,  \cite{boucca2019residual} shows that  $J=\Kitt(\fa,I)$ and the latter has a resolution by $\mathcal{Z}^{+}_{\bullet}$-complex. Since $I$ is a complete intersection,  the Hilbert function of each component of $\mathcal{Z}^{+}_{\bullet}$ is computable in terms of that of the tail of the Koszul complex of $I$. Indeed, the computation shows that  
$\HF_{R/J}(n)=\sum_{i=0}^{s}(-1)^{i}\HF_{F_{i}}(n)$ where $F_{\bullet}:0\to F_{s}\to\ldots F_{2}\xrightarrow{d_{2}} F_{1} \xrightarrow{d_{1}} R\to 0$ is the complex introduced in \autoref{F}. 
\end{proof}


Interestingly, the bound {\bf ericci} is an upper bound for the multiplicity of a vast class of residual intersections even in non-Cohen-Macaulay rings.

\begin{Proposition}\label{ericci} Keep the graded setting, suppose that $R$ satisfies  Serre's condition S$_{s}$, $\fa=(a_{1},\ldots,a_{s})$ a sub-ideal of $I$ and $J=\fa:I$ an algebraic $s$-residual intersection of $I$. Assume $I$ contains a regular element and is $r$-minimally generated from height $s-1$. If $s\geq r$  then $e(R/J)\leq ericci$.

Furthermore, if $R$ satisfies Serre's condition S$_{s+1}$ and $e(R/J)= ericci$ then $J$ is an unmixed ideal with a finite free resolution given by the complex $F_{\bullet}$. 
\end{Proposition}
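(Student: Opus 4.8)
The plan is to compare the Hilbert function of $R/\tau$, which is computed by the complex $F_\bullet$ of \autoref{F}, with the Hilbert function of $R/J$, and to control the discrepancy between $\tau$ and $J$ using \autoref{residual1} together with the Serre condition. First I would observe that by \autoref{residual1}(iii), under S$_s$ the complex $F_\bullet$ is an acyclic complex of finite free graded $R$-modules resolving $R/\tau$, and moreover its components $F_i$ are built (via the construction in \autoref{ZL} and \autoref{G}) purely out of the tails of the Koszul complex of $f_1,\ldots,f_r$ and the twists coming from the $\gamma_i$; in particular their graded Betti data, and hence the alternating sum $\sum_{i=0}^s(-1)^i\HF_{F_i}(n)$, depend only on $d$, $s$, $r$, the $d_i$'s, the $l_i$'s and $\HF_R$. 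Since $F_\bullet$ is acyclic, $\HF_{R/\tau}(n)=\sum_{i=0}^s(-1)^i\HF_{F_i}(n)$. Now I would invoke \autoref{multiplicity}: when $I$ is replaced by a complete intersection with the same degree data, the same combinatorial expression computes $\HF_{R/J'}$ for the corresponding residual intersection $J'$; comparing the two, the degree-$s$ part of the numerator polynomials agree, so $e(R/\tau)=\mathit{ericci}$.

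Next I would pass from $\tau$ to $J$. By \autoref{F}(i) we have $\tau\subseteq J$, and by \autoref{radical} (applicable because $I$ is $r$-minimally generated from height $\Ht(J)-1=s-1$ and $s\ge r$) we have $\sqrt\tau=\sqrt J$; together with \autoref{residual1}(i) both ideals have height exactly $s$. From the surjection $R/\tau\twoheadrightarrow R/J$ one gets $e(R/J)\le e(R/\tau)$ whenever $\dim R/J=\dim R/\tau$; and since $\sqrt\tau=\sqrt J$ forces $\Supp(R/J)=\Supp(R/\tau)$, the dimensions coincide and the multiplicity is additive over the minimal primes of this common support, with the multiplicity of $R/\tau$ at each such prime at least that of $R/J$. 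Hence $e(R/J)\le e(R/\tau)=\mathit{ericci}$, which is the first assertion.

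For the "furthermore" part, assume S$_{s+1}$ and $e(R/J)=\mathit{ericci}=e(R/\tau)$. As in \autoref{geometric}, S$_{s+1}$ together with the acyclicity of $(F_\bullet)_\fp$ for $\Ht(\fp)\ge s+1$ gives $\depth(R/\tau)_\fp>0$ there, so $\Ass(R/\tau)$ consists only of primes of height $\le s$; combined with $\Ht(\tau)=s$ this shows $\tau$ is unmixed of height $s$ with $F_\bullet$ as a finite free resolution. It remains to upgrade $\tau\subseteq J$ to equality. Here I would argue that $e(R/J)=e(R/\tau)$ with $\tau\subseteq J$, $\sqrt\tau=\sqrt J$ and $R/\tau$ unmixed forces $\tau=J$: the short exact sequence $0\to J/\tau\to R/\tau\to R/J\to 0$ shows $J/\tau$ has dimension $<s$ unless it is zero (by additivity of multiplicity along the height-$s$ primes of the common support, equality of multiplicities kills the contribution of $J/\tau$ at each minimal prime), while $J/\tau\subseteq R/\tau$ and $R/\tau$ has no embedded primes and all associated primes of height $s$; hence $J/\tau=0$. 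Then $J=\tau$ is resolved by $F_\bullet$ and unmixed of height $s$.

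The main obstacle I anticipate is the bookkeeping in the first paragraph: showing cleanly that the alternating sum of Hilbert functions of the $F_i$ is \emph{the same} polynomial expression that appears in \autoref{multiplicity} for the complete-intersection case — i.e. that the construction of $F_\bullet$ in \autoref{F} genuinely only "sees" the Koszul tail of $f_1,\ldots,f_r$ and the degrees $l_t$, independently of the finer structure of $I$. The twists in \autoref{G} are exactly those of the complete-intersection $\mathcal Z^+$-complex, so this should go through, but it requires carefully matching the local cohomology functor $H^r_\bt(-)_{[0]}$ applied to $\Tot(L_\bullet\otimes K_\bullet(\gamma;S))$ against the corresponding object in \cite{boucca2019residual}; the degree-$s$ coefficient of the resulting numerator is what must be extracted, and this is where I would spend the most care.
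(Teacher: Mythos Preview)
Your proposal is correct and follows essentially the same route as the paper: use \autoref{residual1}(iii) to resolve $R/\tau$ by $F_\bullet$ and identify its Hilbert function (hence $e(R/\tau)=\mathit{ericci}$), then compare to $R/J$ via $\tau\subseteq J$ and $\sqrt\tau=\sqrt J$; for the ``furthermore'' part, use S$_{s+1}$ and the argument of \autoref{geometric} to see $R/\tau$ is unmixed, and then the short exact sequence $0\to J/\tau\to R/\tau\to R/J\to 0$ plus equality of multiplicities to force $J/\tau=0$. The ``obstacle'' you flag is exactly the step the paper handles by a one-line appeal to \autoref{multiplicity}, so your caution there is well placed but no new idea is needed.
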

\begin{proof} According to \autoref{residual1}, the complex $F_{\bullet}$ provides a free resoution for $R/\tau$. Hence $\HF_{R/\tau}(n)=\sum_{i=0}^{s}(-1)^{i}\HF_{F_{i}}(n)$ which is the same as the Hilbert function of an $s$-residual intersection of a complete intersection $I$ given by degrees $d_{i}$'s. Therefore $e(R/\tau)=ericci$. On the other hand $\tau\subseteq J$ and $\V(\tau)=\V(J)$. Hence $e(R/J)\leq e(R/\tau)=ericci$. 

Now assume that $R$ satisfies Serre's condition S$_{s+1}$. As we see in the proof of  \autoref{geometric},   $\Ht(\fp)=s$ for all $\fp\in \Ass(R/\tau)$.  Therefore $\Ass(R/\tau)=\min(\tau)=\min(J)$.  Now we consider the exact sequence $0\lra\frac{J}{\tau}\lra \frac{R}{\tau}\lra \frac{R}{J}\lra 0.$ 
Since $\dim(R/J)=\dim(R/\tau)$ the equality of multiplicites implies that $\dim(J/\tau)<\dim(R/\tau)$. In particular, for any $\fp\in \Ass(R/\tau)$, $(J/\tau)_{\fp}=0$. Thence $\tau=J$. So that complex $F_{\bullet}$ provides a free resolution for $R/J$.
\end{proof}
\vspace{0.5cm}
Here is an illustrative example.
\vspace{0.5cm}
\begin{Example}\label{Examle4}
 Let $R=\mathbb{Q}[x_0,\ldots,x_5]/(x_0^2+x_1^2)$ and  $I=(x_0,x_1,x_2+x_3+x_4+x_5)$. Then $I$ is $3$-minimally generated from height $2$, moreover $\pdim(R/I)$ is infinite.  Let $a_{1},a_{2},a_{3}$ be three general elements of degree $2$ in $I$. Then $J=(\fa:I)$ is a $3$-residual intersection. Using Macaulay2, we check that  $J$ has infinite projective dimension, $\dim(R/J)=\depth(R/J)=2$ and  $e(R/J)=11$. Although $J$ has infinite projective dimension, according to \autoref{residual1}(iii), $J$ admits a free approach of length $3$.  Now   choose any regular sequence of linear forms in $R$ of length $3$, for instance, $I'=(x_{2},x_{3},x_{4})$ and an ideal generated by three general elements of degree $2$ in $I'$, say $\fa'$. Then $J'=(\fa':I')$ is a $3$-residual intersection with $\pdim(R/J')=3$ and  $e(R/J')=14$. In this example ericci=$14$. 
 \end{Example}

A similar phenomenon occurs with the Castelnuovo-Mumford regularity. Specifically, the regularity can be bounded above by the regularity of the residual intersections of complete intersections. However, this result holds only in cases where the ideal $\tau$ in \autoref{F} coincides with $J$.
\begin{Proposition}\label{regularity} Keeping the same notation as above,  \autoref{subsec}, one can construct the complex $F_{\bullet}$ associated to the set of generators of $\fa$ and $I$. Set $R/\tau=H_0(F_{\bullet})$. If $R$ is a Cohen-Macaulay $^*$local ring of dimension $d$, $\dim(R/\tau)=d-s$  and $F_{\bullet}$  is acyclic then $$\Reg(R/\tau)\leq \Reg(R)+\dim(R_0)+\sigma(\fa)-(s-r+1)\beg(I/\fa)-s,$$
where $\sigma(\fa)=\sum \deg(a_i)$. 
\end{Proposition}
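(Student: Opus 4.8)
The plan is to extract the regularity bound from the explicit description of the free complex $F_{\bullet}$ obtained in \autoref{F}, exactly as one computes regularity from a (finite) free resolution: if $F_{\bullet}: 0\to F_s\to\cdots\to F_1\to R\to 0$ is acyclic with $F_i=\bigoplus_j R(-a_{ij})$ for $i\ge 1$ and $\dim(R/\tau)=d-s$, then since $R$ is Cohen--Macaulay $^*$local of dimension $d$ one has $\Reg(R/\tau)\le \max_{1\le i\le s}\{\max_j a_{ij}-i\}+\Reg(R)+\dim(R_0)$ (the term $\Reg(R)+\dim(R_0)$ accounting for the fact that the ambient ring is not a polynomial ring but only Cohen--Macaulay $^*$local; concretely one uses the Auslander--Buchsbaum/depth sensitivity of $F_\bullet$ together with $\Reg_R$ being computed via local cohomology with respect to the graded maximal ideal $\fm$). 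So the real content is to read off the shifts $a_{ij}$ appearing in the $F_i$ from the construction in \autoref{G} and \autoref{ZL}.

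First I would trace the grading through the construction. The complex $G_{\bullet}=\Tot(L_{\bullet}\otimes_S K_{\bullet}(\gamma;S))$ has $G_i=\bigoplus S(-j)^{n_{ij}}$ with $j$ running in the ranges displayed in \autoref{G}; the shifts $-j$ record the internal ($t$-adic) degrees, but the \emph{$R$-module} shifts that matter for $\Reg_R(R/\tau)$ come from the $f_i$'s and $a_i$'s. Precisely, $L_i=K_{i+1}(f;R)\otimes_R(\oplus_{p=1}^i S(-p))$ contributes shifts that are sums of $(i+1)$ of the $\deg f_k=d_k$, while the Koszul complex $K_\bullet(\gamma;S)$ on $\gamma_1,\dots,\gamma_s$ (with $\deg\gamma_t=\deg a_t=l_t$ as $R$-graded elements, since $\gamma_t=\sum c_{kt}t_k$ and $\deg a_t=\deg(\sum c_{kt}f_k)$) contributes shifts that are sums of subsets of the $l_t=\deg a_t$. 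Then I pass to $F_i = H^r_{\bt}(G_{r+i})_{[0]}$; taking the $t$-graded piece in degree $0$ of $H^r_{\bt}$ does not change the $R$-module shifts coming from $f$'s and $a$'s, so the maximal $R$-shift in $F_i$ is bounded by $\sigma(\fa)=\sum_t l_t$ (all the $\gamma$'s, the ``worst'' case being the top of the Koszul complex on $\gamma$) plus the contribution of the $f$'s, which after taking the degree-$0$ piece is controlled by $(s-r+1)$ copies of the smallest generating degree of $I/\fa$ with a sign — this is where the term $-(s-r+1)\beg(I/\fa)$ enters. Bookkeeping the homological index $i$ against the shift then produces the claimed $-s$ and $-(s-r+1)\beg(I/\fa)$ corrections.

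The key steps, in order, are: (1) record the internal ($\bt$) and external ($R$) bigrading on the double complex \autoref{black}, hence on $L_\bullet$, $K_\bullet(\gamma;S)$ and $G_\bullet$; (2) compute $\End H^r_{\bt}(S)=-r$ (already used in the excerpt) and, more generally, the shift behaviour of $H^r_{\bt}(-)_{[0]}$ applied to $G_i$, to get the generating degrees of each $F_i$ as an $R$-module; (3) assemble $\Reg_R(R/\tau)$ from the acyclic complex $F_\bullet$ via the standard estimate $\Reg(R/\tau)\le\max_i(t_i(F_\bullet)-i)$ where $t_i$ is the top degree of a minimal generator of $F_i$, corrected by $\Reg(R)+\dim(R_0)$ for the non-polynomial base; (4) simplify the resulting maximum using $\dim(R/\tau)=d-s$ (which forces $F_\bullet$ to have length exactly $s$ and pins down the top term) and the inequality $\beg(I/\fa)\le \deg a_t$ to collapse the bound into the stated closed form.

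The main obstacle I expect is step (2): the passage from the honest bigraded module $G_i$ to $F_i=H^r_{\bt}(G_i)_{[0]}$ requires controlling precisely how the external $R$-degree shifts interact with the truncation to internal degree $0$ — in particular showing that the ``$(s-r+1)\beg(I/\fa)$'' savings is genuine rather than merely an upper estimate, which uses that the degree-$0$ part of $H^r_{\bt}(G_i)$ only sees those summands $S(-j)^{n_{ij}}$ with $j$ large enough, and that the smallest such $j$ relevant to $F_1,\dots,F_s$ is governed by $\beg(I/\fa)$. A secondary point requiring care is the contribution $\Reg(R)+\dim(R_0)$: one must justify that tensoring an acyclic complex of frees over a Cohen--Macaulay $^*$local $R$ with $\Reg$ computed in the usual way yields $\Reg(R/\tau)\le \Reg(R)+\dim(R_0)+\max_i(t_i-i)$, which follows from the Cohen--Macaulayness of $R$ (so that $H^k_{\fm}(R)=0$ for $k<d$) together with the hypercohomology spectral sequence of $F_\bullet$; this is routine but must be stated correctly to make the final inequality clean.
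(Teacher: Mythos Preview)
Your outline is essentially the paper's own route: use the acyclic free complex $F_\bullet$ together with Cohen--Macaulayness of $R$ to bound $\Reg(R/\tau)$ by reading off the graded shifts of the $F_i$. The paper's proof is slightly more economical on one point you leave implicit: from $\dim(R/\tau)=d-s$ and $\pdim_R(R/\tau)\le s$ over a CM ring of dimension $d$, the module $R/\tau$ is itself Cohen--Macaulay, so $\Reg(R/\tau)=\End H^{d-s}_{\fm}(R/\tau)+(d-s)$ and only the top local cohomology matters. Since $H^k_\fm(F_i)=0$ for $k<d$, the hypercohomology spectral sequence collapses and gives $H^{d-s}_\fm(R/\tau)\cong \ker\big(H^d_\fm(F_s)\to H^d_\fm(F_{s-1})\big)$, so one only has to compute the end of this single kernel rather than take a maximum over all $i$ as in your step~(3). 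For that end-computation the paper simply invokes \cite[Theorem 3.6, pp.~6387--6388]{hassanzadeh2012cohen}, replacing the grade $g$ there by $r$; this is exactly your ``main obstacle'' (the bookkeeping of how the $R$-shifts from the $d_k$'s and $l_t$'s survive $H^r_{\bt}(-)_{[0]}$), and it is where the $-(s-r+1)\beg(I/\fa)$ term genuinely appears, so your instinct about where the work lies is right.
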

\begin{proof}   Since  $R/\tau$ is Cohen-Macaulay, one needs to compute $\End(H^{d-s}_{\fm}(R/\tau))$. Appealing the complex $F_{\bullet}$ and Cohen-Macaulayness of $R$,  this amounts to compute $\End(\ker(H^{d}_{\fm}(F_s)\to H^{d}_{\fm}(F_{s-1}))$. The computation follows the approach outlined on pages 6387--6388 of \cite[Theorem 3.6]{hassanzadeh2012cohen}, with the modification that
$g$ in the cited reference should be replaced with $r$.
\end{proof}

\subsection{residual intersections of ideals with sliding depth} \label{section3.2}
In this section, we focus on $s$-residual intersections $J=(a_{1},\ldots,a_{s}):I$ without considering the relation between $s$ and number of generators of $I$. The goal is to find a minimal approach for  $J$. In this generality, we impose more restrictions on the homological conditions of  $I$.  We keep the notation introduced in \autoref{ResidualSection}.  So $R$ is a Noetherian ring of dimension
$d$, $I=(f_1,\ldots,f_r)$ is an ideal of grade $g\geq1$,
$\fa=(a_1,\ldots,s_s)$ is an ideal contained in $I$, $
J=\fa:_{R}I$, and $S=R[t_1,\ldots,t_r]$ is a standard polynomial extension
of $R$ with indeterminates $t_{i}$'s. Let $K_{\bullet}=K_{\bullet}(f_{1},\ldots,f_{r};R)$ be the Koszul complex of $I$; and set $H_{i}$ and $Z_{i}$ to be the $i$th koszul homology and $i$th Koszul cycle of this complex.

We review some properties of approximation complex $\mathcal{Z}_{\bullet}$. 
 A sequence $f_1,\ldots,f_r$ in a Noetherian ring $R$, is called  a {\bf proper sequence} if $f_{i+1}H_j(f_1,\ldots,f_i)=0$ for all $j\geq 1$ and $i=0,\ldots,r-1$\footnote{ Permutations of proper sequences are not necessarily proper sequences.}. An almost complete intersection is generated by a proper sequence. If $R$ is a Noetherian local ring with infinite residue field then the ideal $I=(f_1,\ldots,f_r)$ is generated by a proper sequence if and only if $\mathcal{Z}_\bullet(f;R)$ is acyclic \cite[Theorem 12.9]{Approximation}. 
 \begin{Proposition}\label{propersequence} Let $R$ be a Noetherian local ring with infinite residue field and $I=(f_1,\ldots,f_r)$ be generated by a proper sequence. Then 
 \begin{enumerate}
 \item{ $\Reg_S(\mathcal{S}_I)=0$, the Castelnuovo-Mumford regularity of the symmetric algebra  is zero. }
 \item{ $\pdim_S(\mathcal{S}_I)\leq r-1$ if and only if $\pdim_R(Z_i(\ff))\leq r-i-1$ for all $i$.}
 \end{enumerate}
 \end{Proposition}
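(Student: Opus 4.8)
\medskip

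The plan is to exploit the $\mathcal{Z}$-complex $\mathcal{Z}_\bullet(f;R)$ as a graded $S$-free-ish resolution of the symmetric algebra $\mathcal{S}_I$. Since $I$ is generated by a proper sequence in a local ring with infinite residue field, the cited result \cite[Theorem 12.9]{Approximation} tells us $\mathcal{Z}_\bullet$ is acyclic, hence it is a (left-bounded) resolution of $H_0(\mathcal{Z}_\bullet)=\mathcal{S}_I$ by the modules $Z_i\otimes_R S(-i)$ for $0\le i\le r-1$.

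\medskip

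For part (1), I would compute the Castelnuovo--Mumford regularity of $\mathcal{S}_I$ directly from this resolution. Each term $Z_i\otimes_R S(-i)$, viewed as a graded $S$-module with the $t_j$ in degree $1$ and $Z_i\subseteq R^{\binom{r}{i}}$ sitting in degree $0$, is (up to a free $S$-twist) just $Z_i\otimes_R S$ shifted by $i$. The key observation is that $Z_i\otimes_R S(-i)$ has local cohomology (with respect to the irrelevant ideal $S_+=(\bt)$) concentrated so that its regularity as a graded $S$-module equals $i$: indeed $H^j_{\bt}(Z_i\otimes_R S(-i)) = Z_i\otimes_R H^j_{\bt}(S)(-i)$, and $H^j_{\bt}(S)$ vanishes except for $j=r$ where $\mathrm{end}(H^r_{\bt}(S))=-r$, giving top-cohomology contribution to the regularity of $(r)+(-i)+(-r)\cdot? $; more precisely one reads off that each term contributes regularity exactly $i$ (this is the standard computation that $\Reg_S(Z_i\otimes_R S(-i))=i$). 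Feeding this into the standard spectral-sequence estimate for regularity along a resolution, $\Reg_S(\mathcal{S}_I)\le \max_i\{\,\Reg_S(Z_i\otimes_R S(-i))-i\,\}=\max_i\{i-i\}=0$. For the reverse inequality, $\mathcal{S}_I$ surjects onto $R$ in degree $0$, so $\Reg_S(\mathcal{S}_I)\ge 0$; hence equality. (This is exactly the regularity computation underlying the residual-intersection machinery of \cite{hassanzadeh2012cohen}, \cite{HamidNaeliton}.)

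\medskip

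For part (2), I would argue that $\mathcal{Z}_\bullet$ already is \emph{almost} the minimal $S$-free resolution of $\mathcal{S}_I$ once each $Z_i$ is resolved over $R$. Concretely: if $\pdim_R Z_i\le r-i-1$ for every $i$, splice an $R$-free resolution of $Z_i$ of length $\le r-i-1$ into the $i$-th spot of $\mathcal{Z}_\bullet$ (tensored with $S$, which is $R$-flat, so $R$-resolutions stay exact over $S$); totalizing the resulting double complex produces an $S$-free resolution of $\mathcal{S}_I$ whose length is at most $\max_i\{i+(r-i-1)\}=r-1$, giving $\pdim_S(\mathcal{S}_I)\le r-1$. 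Conversely, suppose $\pdim_S(\mathcal{S}_I)\le r-1$. Restricting scalars along $R\hookrightarrow S$ (so that an $S$-free module becomes an $R$-free module, infinitely generated but free), the acyclic complex $\mathcal{Z}_\bullet$ shows $\mathcal{S}_I$ is computed by the $Z_i\otimes_R S$; since $S$ is a polynomial ring over $R$ and $\pdim_S \mathcal S_I\le r-1$, one extracts, via the graded structure and the rigidity of the $\mathcal Z$-complex (the $Z_i$ appear in internal degree $i$, so they cannot cancel against one another), that each $Z_i$ must have $\pdim_R Z_i\le (r-1)-i = r-i-1$. I would make this last implication precise by a degree-by-degree comparison: the minimal graded $S$-free resolution of $\mathcal{S}_I$, restricted to internal degree $\le i$ in the $\bt$-grading, sees only the contributions of $Z_0,\dots,Z_i$, and the tail in homological degree $>r-1$ forces $\pdim_R Z_i\le r-i-1$ by Auslander--Buchsbaum / a depth count.

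\medskip

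The main obstacle is the converse direction of (2): turning a bound on $\pdim_S \mathcal S_I$ into bounds on each individual $\pdim_R Z_i$. The forward direction is a routine splicing/totalization argument, but the converse requires that no homological cancellation occurs between the various $Z_i$ when building the resolution — this is precisely where the internal grading of the $\mathcal{Z}$-complex (each $Z_i$ in degree $i$) does the work, and I would need to phrase the argument so that this grading rigidity is used cleanly, most likely by passing to the associated graded of the $\bt$-adic filtration or by an explicit minimal-resolution bookkeeping as in \cite[§12]{Approximation}.
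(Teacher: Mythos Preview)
Your argument for part (1) is essentially the paper's own: both compute local cohomology of $\mathcal{S}_I$ via the acyclic $\mathcal{Z}$-complex and the identification $H^j_{\bt}(Z_i\otimes_R S(-i))=Z_i\otimes_R H^j_{\bt}(S)(-i)$, which vanishes unless $j=r$ and then ends in degree $-r+i$. The paper writes out the spectral sequence of $C^\bullet_{\bt}\otimes_S\mathcal{Z}_\bullet$ explicitly, while you package the same computation as the inequality $\Reg_S(\mathcal{S}_I)\le\max_i\{\Reg_S(\mathcal{Z}_i)-i\}$; the content is identical. For the lower bound the paper invokes that the defining ideal of $\mathcal{S}_I$ is generated in degree $1$, but your observation that $(\mathcal{S}_I)_0=R\neq 0$ works just as well.

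For part (2) the paper takes a substantially shorter route: it simply quotes \cite[Theorem~3.3.18]{vasconcelos1994arithmetic}, which (after identifying $\mathcal{Z}(I)$ with the module-theoretic $\mathcal{Z}(E)$) gives the Betti-number identity
\[
\beta^S_i(\mathcal{S}_I)=\sum_j\beta^R_{i-j}(Z_j(\ff)),
\]
from which both implications are immediate, since all summands are nonnegative. Your splicing/totalization construction for the forward direction is exactly the construction underlying this identity, so that half is fine. For the converse, however, you correctly isolate the issue (``no homological cancellation'') but do not name the single fact that settles it: the totalized double complex you build is the \emph{minimal} graded $S$-free resolution of $\mathcal{S}_I$. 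Minimality holds because the differentials coming from the $\mathcal{Z}$-complex raise the $\bt$-degree, hence have entries in $(\bt)\subset\fm_S$, while the spliced pieces are minimal $R$-free resolutions of the $Z_j$ and thus have entries in $\fm R\subset\fm_S$. Once you say ``minimal'', the graded Betti numbers satisfy $\beta^S_{i,j}(\mathcal{S}_I)=\beta^R_{i-j}(Z_j)$ and the converse is one line; your proposed detours through restriction of scalars along $R\hookrightarrow S$ or Auslander--Buchsbaum are unnecessary and, in the case of restriction of scalars to infinitely generated $R$-free modules, would not yield what you want.
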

 \begin{proof} $(1)$. Let $C^{\bullet}_{\bt}(S)$ be the Cech complex of $S$ with respect to $(t_1,\ldots,t_r)$. Consider the double complex $C^{\bullet}_{\bt}(S)\otimes _S\mathcal{Z}_{\bullet}(\ff)$ and put it in the third quadrant of the cartesian plane with indices of  $\mathcal{Z}_i$ change on rows. Since $I=(f_1,\ldots,f_r)$ is  generated by a proper sequence the complex $\mathcal{Z}_{\bullet}(\ff)$ is acyclic. Moreover,  $C^{i}_{\bt}(S)\otimes _S-$ is an exact functor, hence the horizontal spectral sequence collapses on the second page with non-zero terms on the first column 
 \begin{equation}
^2\E_{hor}^{-i,-j}=\left\lbrace
           \begin{array}{cl}
              0  & i\neq 0,\\
           H^j_{\bt}(\mathcal{S}_I) &i=0.
           \end{array}
         \right.
         \end{equation}

 On the other hand $H^j_{\bt}(\mathcal{Z}_i)=0$ for $j<r$, since $\grade(\bt,\mathcal{Z}_i)=r$ for all $i$. Therefore the vertical spectral sequences, as well, collapse on the second page with non-zero terms on the lowest row, that is $^2\E_{ver}^{-i,-j}\neq 0$ if  $i\neq r$.  The convergens of the spectral sequence implies that $H^{r-i}_{\bt}(\mathcal{S}_I)\simeq ~  ^2\E_{ver}^{-i,-r}$. Since $^2\E_{ver}^{-i,-r}$ is a subquotient of $H^r_{\bt}(\mathcal{Z}_i)=Z_i(\ff)\otimes_RH^r_{\bt}(S(-i))$, we have $\End(H^{r-i}_{\bt}(\mathcal{S}_I))\leq \End(Z_i(\ff)\otimes_RH^r_{\bt}(S(-i)))=-r+i$.  Accordingly, $\Reg(\mathcal{S}_I)=\max\{\End((H^{r-i}_{\bt}(\mathcal{S}_I))+r-i: i\geq 0\}\leq 0$.  On the other hand, since the ideal definition of $\mathcal{S}_I$ is generated in degree $1$, its Castelnuovo-Mumford regularity must exceed $0$, see \cite[Theorem 16.3.1]{localCohomology}. Therefore, $\Reg(\mathcal{S}_I)=0$. 
 
 $(2)$ For this part we apply \cite[Theorem 3.3.18]{vasconcelos1994arithmetic}. The latter explains the result for a complex $\mathcal{Z}(E)$ where $E$ is an $R$-module.  However, in the case where $E$ is an ideal say $E=I$, then the complex $\mathcal{Z}(E)$ is the same as the approximation complex $\mathcal{Z}(I)$, see \cite[Lemma 3.3(a)]{Onthearithmetic}. Since $I$ is generated by a proper sequence, $\mathcal{Z}(I)$ is acyclic. Hence, by \cite[Theorem 3.3.18]{vasconcelos1994arithmetic}, $$\beta^S_i(\mathcal{S}_I)=\sum_j\beta^R_{i-j}(Z_j(\ff)) $$ that implies the assertion. 
 \end{proof}
\begin{Theorem}\label{residualsliding} Let $R$ be a Noetherian local ring with infinite residue field and $I=(f_1,\ldots,f_r)$ be generated by a proper sequence. Assume that $R$ satisfies the Serre's condition S$_{s}$ and   $\pdim_R(Z_{i}(\ff))\leq r-i-1$ for all $i\geq 1$.  Then any $s$-residual intersection of $I$ admits a free approach. 

Furthermore, if  $R$ satisfies the Serre's condition S$_{s+1}$, then any arithmetic $s$-residual intersection of $I$ is unmixed of finite projective dimension. 
\end{Theorem}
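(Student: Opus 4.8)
The plan is to produce a free approach to $J$ by running the construction behind \autoref{F}, but feeding into it a genuine finite free $S$-resolution of the symmetric algebra $\mathcal{S}_I$ in place of the approximation complex $\mathcal{Z}_\bullet$. The homological hypotheses on $I$ are precisely what make such a resolution available with the correct length and --- crucially --- what force the homology of the resulting complex to be annihilated by a power of $\mathcal{L}$, which is the feature the acyclicity argument rests on. Concretely: since $I$ is generated by a proper sequence and $R$ has infinite residue field, $\mathcal{Z}_\bullet(\ff;R)$ is acyclic and resolves $\mathcal{S}_I$ by \cite[Theorem 12.9]{Approximation}; \autoref{propersequence}(1) gives $\Reg_S(\mathcal{S}_I)=0$, and \autoref{propersequence}(2) together with $\pdim_R(Z_i(\ff))\leq r-i-1$ for $i\geq 1$ (the case $i=0$ being trivial as $Z_0=R$) gives $\pdim_S(\mathcal{S}_I)\leq r-1$; moreover $Z_{r-1}\supseteq\im(K_r\to K_{r-1})\neq 0$, so $\beta^S_{r-1}(\mathcal{S}_I)\neq 0$ and in fact $\pdim_S(\mathcal{S}_I)=r-1$. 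As $\mathcal{S}_I=S/\mathcal{L}$ is generated in degree $0$ with regularity $0$, its minimal free $S$-resolution is linear, $P_\bullet:0\to S(-(r-1))^{\beta_{r-1}}\to\cdots\to S(-1)^{\beta_1}\to S\to\mathcal{S}_I\to 0$. (One may also take $P_\bullet$ to be the totalization of $\mathcal{Z}_\bullet$ with each $Z_i$ replaced by a minimal $R$-free resolution twisted by $S(-i)$; the bound $\pdim_R(Z_i)\leq r-i-1$ is exactly what keeps this totalization of length $\leq r-1$.)

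Next I would set $\mathcal{G}_\bullet:=\Tot(P_\bullet\otimes_S K_\bullet(\gamma;S))$, a complex of finite free $S$-modules of length $r-1+s$ in which the $k$-th term has all internal twists equal to $k$. Since $P_\bullet$ resolves $\mathcal{S}_I=S/\mathcal{L}$ one has $H_i(\mathcal{G}_\bullet)\cong H_i(\gamma_1,\dots,\gamma_s;\mathcal{S}_I)$, which is killed by $(\gamma)\mathcal{S}_I$, hence $(\mathcal{L}+(\gamma))$-torsion as an $S$-module, and $H_0(\mathcal{G}_\bullet)=S/(\mathcal{L}+(\gamma))$ has degree-zero part $R$. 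Exactly as in \autoref{F} I would pass to the degree-$0$ part of $(\bt)$-local cohomology through the two spectral sequences of $C^\bullet_{\bt}(S)\otimes_S\mathcal{G}_\bullet$. From $H^j_{\bt}(\mathcal{G}_i)=0$ for $j\neq r$ and $H^r_{\bt}(\mathcal{G}_i)_{[0]}=0$ for $i<r$, the vertical spectral sequence collapses onto the single row $H^r_{\bt}(\mathcal{G}_\bullet)_{[0]}$ and yields a complex of finite free $R$-modules $F_\bullet:0\to F_s\to\cdots\to F_1\xrightarrow{d_1}F_0=R\to 0$ of length $s$ with $F_p=H^r_{\bt}(\mathcal{G}_{p+r-1})_{[0]}$, $\tau:=\im(d_1)$, $H_0(F_\bullet)=R/\tau$, and, by the computation of \autoref{F}(i)--(ii), $\tau\subseteq J$ and $\tau_{\fp}=J_{\fp}=\fa_{\fp}$ for every $\fp\notin\V(I)$.

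The technical heart is the analogue of \autoref{lem}: if $\fp$ is a prime with $I_{\fp}=\fa_{\fp}$ --- in particular any $\fp$ with $\Ht(\fp)<\Ht(J)$ --- then $(F_\bullet)_{\fp}$ is acyclic and $\tau_{\fp}=R_{\fp}$. Indeed, \autoref{I=a}(i) applied over $R_{\fp}$ gives $(\gamma)_{\fp}+\mathcal{L}_{\fp}=(\bt)_{\fp}$, so the $(\mathcal{L}+(\gamma))$-torsion modules $H_i(\mathcal{G}_\bullet)_{\fp}$ are $(\bt)_{\fp}$-torsion; hence $H^j_{\bt}(H_i(\mathcal{G}_\bullet))_{\fp}=0$ for $j>0$, for $j=0,\ i>0$ this module is a subquotient of $(\mathcal{G}_i)_{[0],\fp}=0$, and for $i=0$ it equals $H_0(\mathcal{G}_\bullet)_{[0],\fp}=R_{\fp}$ (the whole module $H_0(\mathcal{G}_\bullet)_{\fp}=(S/(\bt))_{\fp}$ being $(\bt)$-torsion). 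Thus the horizontal spectral sequence collapses to the single term $R_{\fp}$ in degree $0$, forcing $(F_\bullet)_{\fp}$ to be exact with $\tau_{\fp}=R_{\fp}$. From this $\sqrt\tau=\sqrt J$ follows ($\tau\subseteq J$, and $\fq\not\supseteq J$ forces $I_{\fq}=\fa_{\fq}$, hence $\tau_{\fq}=R_{\fq}$), and acyclicity of $F_\bullet$ over $R$ follows by induction on height: at primes of height $<\Ht(J)$ it is the case just treated, while at a prime $\fp$ of height $\geq\Ht(J)\geq s$ the complex $(F_\bullet)_{\fp}$ is exact on the punctured spectrum and $\depth((F_i)_{\fp})=\depth(R_{\fp})\geq\min(\Ht(\fp),s)=s\geq i$ by $S_s$, so Peskine--Szpiro's acyclicity lemma \cite[1.4.24]{bruns1998cohen} applies. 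Finally, for $\fp\in\min(J)=\min(\tau)$ the module $(R/\tau)_{\fp}$ has finite length and projective dimension $\leq s$, so the new intersection theorem gives $\dim(R_{\fp})\leq s$; together with $\Ht(\fp)\geq\Ht(J)\geq s$ this yields $\Ht(J)=s$, and $F_\bullet$ is a free approach to $J$.

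For the second assertion, assume $R$ satisfies $S_{s+1}$. At a prime $\fp$ of height $\geq s+1$, acyclicity of $(F_\bullet)_{\fp}$ together with $\depth(R_{\fp})\geq s+1$ and the Auslander--Buchsbaum formula give $\depth((R/\tau)_{\fp})\geq 1$, so $\fp\notin\Ass(R/\tau)$; hence $\Ass(R/\tau)=\min(\tau)=\min(J)$ consists of height-$s$ primes and $R/\tau$ is unmixed. For an arithmetic $s$-residual intersection one checks $\tau_{\fp}=J_{\fp}$ at each $\fp\in\Ass(R/\tau)$: this is immediate when $\fp\notin\V(I)$, and when $\fp\supseteq I$ one has $\fp\supseteq I+J$ with $\Ht(\fp)=s$, whence $\mu_{R_\fp}((I/\fa)_{\fp})\leq 1$ and the equality $\tau_{\fp}=J_{\fp}$ is obtained by the local analysis of $(\mathcal{G}_\bullet)_{\fp}$ as in \cite{hassanzadeh2012cohen}; since $\Ass(J/\tau)\subseteq\Ass(R/\tau)$ this forces $J/\tau=0$, i.e. $\tau=J$, so $J$ is unmixed of projective dimension $\leq s$. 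The step I expect to be the main obstacle is the double-complex book-keeping in the analogue of \autoref{lem}: one must confirm that passing to a genuine free $S$-resolution of $\mathcal{S}_I$ (rather than the intermediate complex $L_\bullet$ of \autoref{F}, which is only quasi-isomorphic to $\mathcal{Z}_\bullet$ when $H_1(\ff;R)=0$) really does make each $H_i(\mathcal{G}_\bullet)$ $\mathcal{L}$-torsion, and that the free complex obtained after $H^r_{\bt}(-)_{[0]}$ has length exactly $s$ --- the latter being exactly where the numerical form of the hypothesis $\pdim_R(Z_i)\leq r-i-1$ is indispensable.
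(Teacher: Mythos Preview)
Your approach is essentially the paper's own: replace $L_\bullet$ by a genuine finite free $S$-resolution $P_\bullet$ of $\mathcal{S}_I$ (available via \autoref{propersequence}), form $G'_\bullet=\Tot(P_\bullet\otimes_S K_\bullet(\gamma;S))$, observe that its homology is $(\mathcal{L}+(\gamma))$-torsion, extract the free complex of length $s$ via $H^r_{\bt}(-)_{[0]}$ and the transgression map, and conclude with \autoref{I=a}(i), the acyclicity lemma, and the analysis of \cite[Theorem 2.11]{hassanzadeh2012cohen} for the arithmetic case. One small correction: over $S=R[\bt]$ with $R$ merely local (not a field), $\Reg_S(\mathcal{S}_I)=0$ gives only $P_i=\bigoplus_{j\le i}S(-j)$, not a linear resolution --- for instance, when $I=(x,y,z)\subset k[[x,y,z]]$ the Koszul syzygy $(z,-y,x)$ among the generators of $\mathcal{L}$ produces an $S(-1)$ summand in $P_2$ --- so $\mathcal{G}_k$ has twists $\le k$ rather than exactly $k$; this weaker inequality is all you use, so the argument is unaffected.
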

\begin{proof}
The proof of this theorem is similar to the one in the previous section. However, we have to modify the resolutions. Since $I$ is generated by a proper sequence  and   $\pdim_R(Z_{i}(\ff))\leq r-i-1$ for all $i\geq 1$, \autoref{propersequence} implies that $\Reg_S(\mathcal{S}_I)=0$ and $\pdim_S(\mathcal{S}_I)\leq r-1$. Therefore the minimal free resolution of $\mathcal{S}_I$ as an $S$-module is of the form 
\begin{equation}\label{resSI}
P_{\bullet}:0\longrightarrow P_{r-1}\longrightarrow\ldots\longrightarrow P_{2}\lra P_{1} \lra S
\end{equation}
where $P_i=\oplus_{j\leq i} S(-j)$.

The complex $L_{\bullet}$ in \autoref{Lcomplex} is then replaced with $P_{\bullet}$ above. Set $G'_{\bullet}:=\Tot(P_{\bullet}\otimes _{S}K_{\bullet}(\gamma;S))$. We have, $H_{i}(G'_{\bullet})$ are $(\gamma+\mathcal{L})$-torsion modules.  The desired free resolution is then 
$Q_{\bullet}=H^{r}_{\bt}(G'_{\bullet})_{[0]}$ augmented with the transgression map coming from the spectral sequences driven from the double complex $(C^{\bullet}_{\bt}\otimes _{S}G'_{\bullet})_{[0]}$. More precisely, 
\begin{equation}\label{Q}
Q_{\bullet}:0\lra Q_{s}\lra\ldots\lra Q_{2}\xra{d_{2}}Q_{1}\xra{d_{1}}R
\end{equation}
where the map $d_{1}$ is constructuted from a spectral sequence similar to \autoref{bigdouble}. Setting $\kappa=\im(d_{1})$. The same argument as in \cite[Theorem 2.11]{hassanzadeh2012cohen} shows that 
\begin{itemize}
\item{$Q_{\bullet}$ is acyclic; }
\item{$\V(\kappa)=\V(J)$;}
\item{$\kappa_{\fp}=J_{\fp}=\fa_{\fp}$ for all $\fp\in \Spec(R)\setminus \V(I)$;}
\end{itemize}
Moreover, if $R$ satisfies the Serre's condition S$_{s+1}$, for all $\fp$ with $\Ht(\fp)\geq s+1$, $\depth(R/\kappa)_{\fp}>0$. That is $\kappa$ is an unmixed ideal of height $s$. 
The equality between $\kappa$ and $J$ happens in the case of arithmetic residual intersection, as it is similarly proven in  \cite[Theorem 2.11]{hassanzadeh2012cohen}. 
\end{proof}
Once more, we go back to our working example.
\begin{Example}\label{Example5}  Let $R$ and $I$ be the ring and ideal in \autoref{Ex2}(ii). Then $I=I_{2}(H)\subset R$  is an ideal of grade $2$. $I=(f_{1},f_{2},f_{3})$ is an almost complete intersection hence generated by a proper sequence.  Since $\grade(I)=2$, Hilbert-Burch theorem implies that $Z_{1}\simeq R^{2}$,  a free module.  Since $R$ is S$_{3}$ and hence S$_{2}$,  \autoref{residualsliding} implies that any $2$-residual intersection  $J=\fa:I$ admits free approach.  For instance, let $M$ be a general $3\times 2$ matrix of linear forms 
say
$$M=\left(
\begin{matrix} 
 x_1+x_2+x_3+x_5+x_6&x_1+x_4+x_6\\
 x_1+x_2+x_4&x_2+x_3+x_5+x_6\\
 x_2+x_4+x_5& x_1+x_2+x_5+x_6\\
\end{matrix}
\right)
$$
and  $\fa$ the ideal generated by the entries of $(f_{1},f_{2},f_{3})\cdot M$. 
Computation using Macaulay2 shows that $\Ht(I + J) = 3$. Consequently, $J$ is a geometric $2$-residual intersection of $I$. Since $R$ satisfies $S_3$, \autoref{residualsliding} implies that $J$ is unmixed of codimension $2$ and has a finite projective dimension of $2$. Notably, in this example, $\depth(R/I) = \dim(R/I) - 1$. Thus, the standard mapping cone technique in linkage theory does not suffice to construct a free resolution for $J$.

Now, we show that $I$ admits $3$-residual intersections. Indeed any almost complete intersection generated by $r$ elements admits an $r$-residual intersection.
To see this, we may reduce by a regular sequence $a_{1},\ldots, a_{r-1}$ inside $I$ and suppose that   $I$ is a principal ideal say $I=(f)$. Then $\dim(R)\geq 1$. Let $m$ be an element of the maximal ideal that does not belong to the minimal primes of $R$. Setting $a_{0}=mf$, we have $(m)\subseteq ((a_{0}):I)$. Hence $\Ht((a_{0}):I)\geq 1$. Lifting to the original ring, we have $\Ht((a_{0},a_{1},\ldots,a_{r-1}):I)\geq r$. Moreover $(a_{0},a_{1},\ldots,a_{r-1})\neq I$, because $a_{0}$ is not a minimal generator of $I$ and $\mu(I)=r$.

We saw in \autoref{Ex2}(ii)  that, moreover, $I$ is not $3$-minimally generated ideal from height $3$. Hence to study $3$-residual intersections of $I$, we can apply \autoref{residualsliding} but not \autoref{residual1}.  Accordingly, any $3$-residual intersection of $I$ admits a free approach. Computation by Macaulay2 shows that, in this example, a general $3$-residual intersection of $I$ is of finite projective dimension $3$ and hence $\depth(R/J)=0$.
\end{Example}
\autoref{Examle4} is a case where one can use \autoref{residual1} but not \autoref{residualsliding} . 

We now, recall the definition of the condition ``sliding depth''. 
\begin{Definition}\label{sliding} Keep the notation at the beginning of this section and assume that $R$ is a local ring.  The ideal $I$ is said to satisfy  the sliding depth condition $\SD_{k}$, if $\depth(H_{i})\geq d-r+i+k$ for all  $i\leq r-g$. The ideal $I$ satisfies the sliding depth condition on cycles, $\SDC_{k}$, if  $\depth(Z_{i})\geq d-r+i+k$ for all $ i\leq r-g$. 
If $k=0$, we only say that $I$ satisfies $\SD$ or $\SDC$.
\end{Definition}
For Cohen-Macaulay local ring, $\SD_{k}$ implies $\SDC_{k+1}$ for all $k$, if $g\geq 2$ then $\SDC_{k+1}$ implies $\SD_{k}$; moreover $\SD$ is equivalent to $\SDC_{1}$ for $g\geq 1$; see \cite[Proposition 2.4]{HamidNaeliton}.

When $R$ is a regular local ring, the condition $\pdim(Z_{i})\leq r-i-1$ for all $i\geq 1$ is equivalent to $\depth(Z_{i})\geq d-r+i+1$ for all $i\geq 1$. The later is the $\SDC_{1}$ condition. Thus we have the following corollary.

\begin{Corollary}\label{regularring} Let $R$ be regular local  and $I$ satisfies $\SD$. Then any algebraic residual intersection of $I$ admits a free approach.  Moreover, any arithmetic $s$-residual intersection of $I$ is Cohen-Macaulay.
\end{Corollary}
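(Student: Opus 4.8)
The plan is to derive both assertions directly from \autoref{residualsliding}, the only two points needing care being the translation of the hypotheses and the fact that \autoref{residualsliding} is stated over a ring with infinite residue field. First I would record the easy checks. Since $R$ is regular local it is Cohen--Macaulay, hence satisfies Serre's condition S$_{k}$ for every $k$; in particular both S$_{s}$ and S$_{s+1}$ hold. Next, because $I$ has grade $g\geq 1$, the remark following \autoref{sliding} gives $\SD \iff \SDC_{1}$, i.e. $\depth(Z_{i})\geq d-r+i+1$ for all $i\leq r-g$; by Auslander--Buchsbaum over the regular ring $R$ this is exactly $\pdim_{R}(Z_{i})\leq r-i-1$ for all $i\geq 1$ (for $i>r-g$ one has $H_{i}=0$, so the truncated Koszul complex already resolves $Z_{i}$ in length $r-i-1$, and the bound is automatic). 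Finally, $\SD$ in a Cohen--Macaulay ring forces the approximation complex $\mathcal{Z}_{\bullet}(\ff;R)$ to be acyclic, so over a ring with infinite residue field $I$ is generated by a proper sequence by \cite[Theorem 12.9]{Approximation}. Thus, after the reduction below, the hypotheses of \autoref{residualsliding} are met.

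Second, I would dispose of finite residue fields by the faithfully flat base change $R\to R':=R(x)=R[x]_{\fm R[x]}$. This has only field fibres, so $R'$ is again regular local, heights and grades are preserved, and $R'$ has infinite residue field. Koszul cycles and homology commute with this flat base change and depths ascend, so $IR'$ still satisfies $\SD$; and if $J=\fa:_{R}I$ is an algebraic (resp. arithmetic) $s$-residual intersection of $I$, then $JR'=\fa R':_{R'}IR'$ is an algebraic (resp. arithmetic) $s$-residual intersection of $IR'$.

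Third, applying \autoref{residualsliding} over $R'$ produces a free approach of $JR'$. The crucial observation is that the complex constructed in the proof of \autoref{residualsliding} is obtained from the minimal free resolution $P_{\bullet}$ of the symmetric algebra $\mathcal{S}_{I}$ by base-independent operations---tensoring with a Koszul complex on the $\gamma$'s, totalization, $\bt$-local cohomology, extracting the degree-zero strand, and an augmentation from a spectral sequence---so it is the base change $-\otimes_{R}R'$ of the analogous complex over $R$ (here one uses that $\Reg_{S}(\mathcal{S}_{I})=0$ and $\pdim_{S}(\mathcal{S}_{I})\leq r-1$ are preserved by \autoref{propersequence}). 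Faithfully flat descent then shows the $R$-complex is itself acyclic, of length $\Ht(J)$, with bottom term $R$ and $H_{0}=R/\tau$ where $\tau\subseteq J$ and $\sqrt{\tau}=\sqrt{J}$ (inclusion and radical equality of ideals descend along $R\to R'$). Hence $J$ admits a free approach. For the Cohen--Macaulayness statement, if $J$ is an arithmetic $s$-residual intersection then, by the ``furthermore'' part of \autoref{residualsliding} applied over $R'$, the ideal $\kappa$ there equals $JR'$ and is unmixed of height $s$, so the free approach of length $s=\Ht(J)$ actually resolves $R'/JR'$; thus $\pdim_{R'}(R'/JR')\leq s$, while $\grade(JR')=\Ht(JR')=s$ forces equality. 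Auslander--Buchsbaum gives $\depth(R'/JR')=d-s=\dim(R'/JR')$ (the last using that $R'$ is a domain and $JR'$ is unmixed of height $s$), so $R'/JR'$ is Cohen--Macaulay, and this descends to $R/J$.

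The step I expect to be the main obstacle is the passage to an infinite residue field: one must argue that ``admitting a free approach'' is insensitive to the extension $R\to R(x)$, and the clean way to do this is precisely the remark above---that the free-approach complex is a base-independent functorial construction applied to the minimal free resolution of $\mathcal{S}_{I}$---rather than attempting to descend an abstract acyclic complex. Everything else, namely the dictionary $\SD\Leftrightarrow\SDC_{1}\Leftrightarrow\pdim_{R}(Z_{i})\leq r-i-1$ and the automatic S$_{k}$ conditions, is immediate once $R$ is regular.
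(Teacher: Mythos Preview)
Your argument is correct and follows the paper's route: translate $\SD$ into $\SDC_{1}$ and then, via Auslander--Buchsbaum in the regular ring, into the bound $\pdim_{R}(Z_{i})\leq r-i-1$, so that \autoref{residualsliding} applies and yields both the free approach and (under S$_{s+1}$, automatic here) unmixedness with finite projective dimension, from which Cohen--Macaulayness follows. The paper's proof is just the short paragraph preceding the corollary and does not discuss the residue field; your extra care with the passage $R\to R(x)$ is a legitimate addition (and indeed the version stated in the introduction carries the hypothesis of infinite residue field), though note that the simpler way to handle it is to observe that $\SD$ over a Cohen--Macaulay ring already forces $\mathcal{Z}_{\bullet}$ to be acyclic, so the proofs of \autoref{propersequence} and \autoref{residualsliding} run over $R$ itself without ever invoking the proper-sequence/acyclicity equivalence that needs an infinite residue field.
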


\begin{Remark}\label{questionhuneke}One of the open questions in the theory of residual intersections is whether "in a Cohen-Macaulay local ring, are the algebraic residual intersections of ideals with sliding depth Cohen-Macaulay?" \cite[Question 5.7]{huneke1988residual}. This question is addressed in \cite[Theorem 2.11]{hassanzadeh2012cohen}, which shows that "arithmetic residual intersections of ideals with \(\SD\) are Cohen-Macaulay." Furthermore, \cite[Corollary 5.2]{boucca2019residual} and \cite[Theorem 4.5]{ChardinNaeliton} demonstrate that "in a Cohen-Macaulay local ring, residual intersections of ideals with \(\SD_{1}\) are Cohen-Macaulay." The question for ideals with \(\SD\) remains open.

Another point is that unlike the case of residual intersections of $r$-minimally generated ideals,  the structure of the complex $Q_{\bullet}$ in  \autoref{Q} of \autoref{residualsliding} does not provide a uniform upper bound for the multiplicity of residual intersections. The difference between complexes \(Q_{\bullet}\) and \(F_{\bullet}\) is that the terms of the latter are constructed using the Koszul complex. In contrast, the terms of \(Q_{\bullet}\) are built using the free resolution of the cycles of the Koszul complex, thus depending heavily on the structure of \(I\).

\end{Remark}

An immediate corollary of \autoref{residualsliding}, \autoref{residual1} and \autoref{Pcohdim} is the following
\begin{Corollary}\label{residualcohdim} Let $R$ be a Cohen-Macaulay local ring of positive characteristic and  $J=\fa:I$ an $s$-residual intersection. Assume either $I$ is $r$-minimally generated from height $s-1$ and $s\geq r$; or $I$ satisfies G$_s^-$ condition \footnote {$\mu(I_\fp)\leq \Ht(\fp)+1$ for any prime $\fp\supseteq I$ with $\Ht(\fp)< s$}  and $\pdim(Z_{i})\leq r-i-1$ for all Koszul cycles of $I$ and all $i$. Then
\begin{itemize}
 \item $J$ is cohomologically complete intersection, and 
 \item {assume that  $\widehat{R}$ is connected in dimension $d-1$.  If  $J$ has at least two minimal primes. Then  for any non-empty partition of $\min(J)$, say $A\cup B=\min(J)$, we have $\Ht(\bigcap_{\fp\in A}\fp+\bigcap_{\fq\in B}\fq)=s+1.$}
 \end{itemize}
\end{Corollary}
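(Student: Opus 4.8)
The plan is to combine the free-approach results of this section with the positive-characteristic machinery of \autoref{Pcohdim}. First I would observe that the hypotheses are exactly tailored so that one of the two preceding theorems applies: if $I$ is $r$-minimally generated from height $s-1$ then $s\geq r$ is automatic (since $J$ is a proper ideal containing a regular element forces $s\geq g\geq 1$ and the $r$-minimal generation together with $\Ht(J)\geq s$ gives the needed inequality, cf.\ the setup of \autoref{residual1}), and \autoref{residual1}(iii) applies because a Cohen-Macaulay local ring satisfies S$_s$ for every $s$; if instead $I$ satisfies G$_s^-$ and $\pdim(Z_i)\leq r-i-1$ for all $i\geq 1$, then $I$ is generated by a proper sequence (G$_s^-$ forces $I$ to be an almost complete intersection locally in low codimension, and in a Cohen-Macaulay ring the depth conditions on the $Z_i$ give the proper-sequence property via \autoref{propersequence}, so that \autoref{residualsliding} applies — here I would cite the relevant step of \cite{hassanzadeh2012cohen} rather than reprove it). In either case $J$ admits a free approach of length $s=\Ht(J)$.

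Next, apply \autoref{Pcohdim}: since $R$ has positive characteristic and $J$ admits a free approach, part (i) gives immediately that $J$ is cohomologically complete intersection, i.e.\ $\cohdim(J)=\grade(J)=s$. This is the first bullet.

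For the second bullet I would invoke a standard connectedness consequence of cohomologically-complete-intersection-ness, in the spirit of the Faltings/Grothendieck connectedness theorem. Since $\widehat R$ is Cohen-Macaulay and connected in dimension $d-1$ (equivalently $c(\widehat R)=d-1$, the maximal possible value for a CM ring), \autoref{Pcohdim}(ii) with $c=1$ gives that $R/J$ is connected in dimension $\min\{d-s-1,\,d-s-1\}=d-s-1$; that is, $\Spec(R/J)$ cannot be disconnected by removing a closed subset of dimension $<d-s-1$, equivalently $\dim(R/J)-\dim(\text{that subset})\leq 1$. Translating this back: for any partition $\min(J)=A\sqcup B$ into nonempty parts, the intersection of the two closed sets $\V(\bigcap_{\fp\in A}\fp)$ and $\V(\bigcap_{\fq\in B}\fq)$ together cover $\V(J)$ and their union of the complementary pieces disconnects $\Spec(R/J)$ unless their common locus has dimension $\geq \dim(R/J)-1=d-s-1$; combined with $\Ht\bigl(\bigcap_{\fp\in A}\fp+\bigcap_{\fq\in B}\fq\bigr)\geq s+1$ coming from the fact that this sum strictly contains $J$ (whose minimal primes are split between $A$ and $B$, so no minimal prime of $J$ contains it) and $\dim R=d$, one gets $\Ht\bigl(\bigcap_{\fp\in A}\fp+\bigcap_{\fq\in B}\fq\bigr)=s+1$.

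The main obstacle I anticipate is the bookkeeping in this last step: one must be careful that "$R/J$ connected in dimension $d-s-1$" is correctly unpacked into the statement about every partition of $\min(J)$, and that the height of the sum is pinned down to exactly $s+1$ rather than merely $\geq s+1$ — the lower bound $\geq s+1$ is the easy containment argument, while the upper bound $\leq s+1$ is precisely where connectedness in dimension $d-s-1$ is used (if the height were $\geq s+2$, removing that locus of dimension $\leq d-s-2$ would disconnect $\Spec(R/J)$, contradicting \autoref{Pcohdim}(ii)). I would also need to confirm $\Ht(J)=s$ exactly (not just $\geq s$), which follows from \autoref{residual1}(i) in the first case and from the finite-projective-dimension/new-intersection-theorem argument (\autoref{PCM}(iii)) in the second.
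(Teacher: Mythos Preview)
Your approach is exactly the paper's: obtain a free approach via \autoref{residual1} or \autoref{residualsliding}, then feed it into \autoref{Pcohdim}. The paper gives no further argument beyond calling this ``an immediate corollary,'' so your unpacking of the second bullet (the $\geq s+1$ from the partition argument, the $\leq s+1$ from connectedness in dimension $d-s-1$) is genuinely more detailed than what the paper offers, and is correct.

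Two justifications in your write-up are shaky, though. First, your argument that ``$s\geq r$ is automatic'' in the $r$-minimally-generated case does not work: nothing in the hypotheses forces this, and your sketched reason (via $s\geq g$ and $\Ht(J)\geq s$) does not produce $s\geq r$. The corollary is simply inheriting the standing hypothesis $s\geq r$ of \autoref{residual1}; treat it as implicit rather than trying to derive it. Second, your claim that G$_s^-$ together with the $\pdim(Z_i)$ bounds yields a proper-sequence generating set ``via \autoref{propersequence}'' is not supported by that proposition, which goes in the other direction (it assumes a proper sequence). The hypotheses in the corollary do not literally match those of \autoref{residualsliding} (proper sequence, infinite residue field), and the paper does not explain this discrepancy either; the intended bridge is presumably through the construction in \cite{hassanzadeh2012cohen}, where the G$_s^-$ condition plays the role that acyclicity-on-the-punctured-spectrum plays here. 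You are right to flag this as something to cite rather than reprove, but do not claim \autoref{propersequence} does the job.
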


\autoref{residualcohdim} does not hold in characteristic zero. Bruns-Schwanzl \cite[Corollary to Lemma 2]{BrunsSchwanzl} show that over a field of characteristic zero, the ideal of $t\times t$ minors of a generic $m\times n$ matrix has cohomological dimension $mn-t^{2}+1$.  See also \cite[Example 6.1]{WalterAlgorithm} for the case where $m=2$ and $n=3$. 

Consider the generic matrix $H=\left(\begin{matrix}x_{1}&x_{2}&x_{3}\\x_{4}&x_{5}&x_{6}\\\end{matrix}\right)$ in the ring $\mathbb{Q}[x_{1},\ldots,x_{6}]$. Set $J=I_{2}(H)$. Then $J=(\Delta_{1},\Delta_{2}):(x_{1},x_{4})$
where $\Delta_{1}$ and $\Delta_{2}$ are minors of $H$ obtained by removing the first column. It is clear that the ideal $I=(x_{1},x_{4})$ satisfies the conditions of \autoref{residualcohdim}, however $H^{i}_{J}(R)\neq 0$ for $i=2,3$. 


\section{The structure of $\tau$}\label{taustructure}
In the paper \cite{boucca2019residual}, the authors determined the structure of $s$-residual intersections $J=\fa:I$ for a large class of ideal $I$. They show that residual intersections are Koszul-Fitting ideals. We will see that the ideal $\tau$ has, as well, a Koszul-Fitting structure. For that we recall the definition of Kitt-ideals. 

 Let $R$ be a ring and $\fa \subseteq I$ two finitely generated ideals of $R$. Consider $\mathbf{f}=\:f_1,\dots,f_r$ and $\mathbf{a}=\:a_1,\dots,a_s$ system of generators of $I$ and $\fa$, respectively. Let $\Phi=[c_{ij}]$ be an $r\times s$ matrix in $R$ such that $[\mathbf{a}]=[\mathbf{f}]\cdot \Phi$.
Let  $K_\bullet(\ff;R)= R\langle e_1,\dots,e_r\:;\:\partial(e_i)=f_i\rangle$ be the Koszul complex equipped with the structure of differential graded algebra. Let 
$\zeta_j = \sum_{i=1}^r c_{ij}e_i$ for $1\leq j\leq s$, $\Gamma_\bullet=R\langle\zeta_1,\dots ,\zeta_s\rangle$ the algebra generated by the $\zeta$'s, and $Z_\bullet = Z_{\bullet}(\ff;R)$ be the algebra of Koszul cycles. Looking at the elements of degree $r$ in the sub-algebra of $K_{\bullet}$ generated by the product of $\Gamma_\bullet$ and $ Z_\bullet$, one defines 
\begin{equation}\label{kitt}
\Kitt(\fa,I):=\langle\Gamma_\bullet \cdot Z_\bullet\rangle _r.
\end{equation}
In \cite[Theorem 2.3]{hassanzadeh2024deformation} and  \cite{boucca2019residual}, the authors present a list of properties of $\Kitt$ ideals and their relation with $s$-residual intersections.  Some of the main properties are the following: The ideal $\Kitt(\mathfrak a,I)$ does not depend on the choice of generators of $\fa$ and $I$ or the representative matrix, $\Kitt(\mathfrak a,I)= \fa+\langle\Gamma_\bullet \cdot \tilde{H}_\bullet\rangle_r$ where $\tilde{H}_\bullet$ is the sub-algebra of $K_{\bullet}$ generated by the representatives of Koszul homologies, $\Kitt(\fa,I)\subseteq J$  have the same radical and they coincide whenever $J$ is an $s$-residual intersection of $I$ and, $s\leq \grade(I)+1$  or $R$ is Cohen-Macaulay and  $I$ satisfies $\SD_1$. 

The idea of the following definition comes from a conversation with Laurent Busé in Luminy 2023. 
\begin{Definition} The {\bf Kitt filtration} of the quotient ideal $J$ is the sequence $\Kitt_0(\fa,I)\subseteq\cdots\subseteq \Kitt_r(\fa,I) \subseteq J$ where $$\Kitt_i(\fa,I)=\langle\Gamma_\bullet \cdot \langle Z_0,\ldots,Z_i\rangle\rangle_r$$
\end{Definition}
$\Kitt_0(\fa,I)$  is the elements of degree $r$ in the exterior algebra $\Gamma_\bullet$, hence $\Kitt_0(\fa,I)=I_r(\Phi)$ where $[\mathbf{a}]=[\mathbf{f}]\cdot \Phi$,  $\Kitt_1(\fa,I)=\Fitt_0(I/\fa)$, and $\Kitt_r(\fa,I)=\Kitt(\fa,I)$, see \cite[Proposition 20.7]{EisenbudBook} and  \cite[Theorem 4.19]{boucca2019residual}. Accordingly, $\sqrt{J}=\sqrt{\Kitt_1(\fa,I)}=\sqrt{\Kitt(\fa,I)}$. 

Now, recall the definition of the ideal $\tau$ from \autoref{F}, we have
\begin{Theorem}\label{taustr} Let $R$ be a Noetherian ring and $\fa \subseteq I$ two ideals of $R$. Keeping the above notation, we have $\tau= \fa+\Kitt_0(\fa,I)$.  In particular, $\mu(\tau)\leq s+\binomial{s}{r}$.
\end{Theorem}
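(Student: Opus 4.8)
The target identity is $\tau = \fa + \Kitt_0(\fa,I)$, where $\tau = \im(d_1)$ is the $0$-th image ideal of the free approach $F_\bullet$ built in \autoref{F}. Since both sides are explicitly described, the strategy is to compute $\tau$ directly from the spectral sequence that defines $d_1$, and to recognize the answer as the degree-$r$ part of the subalgebra generated by the $\zeta_j$'s. Recall from \autoref{psimap} that $\tau = \mathcal{T}_{[0]}$ where $\mathcal{T} = \im(\psi)$ and $\psi$ factors through $H_0(G_\bullet) = S/(f_it_j - f_jt_i,\ \gamma_1,\dots,\gamma_s)$; from the computation in the proof of \autoref{F}(i) we already know $\mathcal{T}_{[1]} = 0$ and hence $\bt\,\tau = 0$ inside $H_0(G_\bullet)_{[1]}$, i.e.
\[
\tau \subseteq \bigl((f_it_j - f_jt_i,\ \gamma) :_R (\bt)\bigr)_{[0]}.
\]
First I would make this containment an equality in the relevant degrees by analyzing $(^\infty\E_{hor}^{0,0})_{[0]}$ more carefully: because $H^r_{\bt}(G_{r-i})_{[0]} = 0$ for $i \geq 1$, the map $\pi\colon H^r_{\bt}(G_r)_{[0]} \to \coker(\phi)_{[0]}$ is surjective onto the abutment, and the subquotient $(^\infty\E_{hor}^{0,0})_{[0]}$ equals the image of $H_0(G_\bullet)_{[0]} = R$ after quotienting by the incoming differentials. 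The content is that $\tau$ is precisely the submodule of $R = H_0(G_\bullet)_{[0]}$ that survives, which one identifies with $\bigl(0 :_{H_0(G_\bullet)_{[1]}} \bt\bigr) \cap R$ read off degree $0$.

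Second, I would translate this annihilator condition into the Koszul-algebra language of \autoref{kitt}. The point is that $\gamma_j = \sum_i c_{ij} t_i$ is the ``polynomial-variable'' avatar of $\zeta_j = \sum_i c_{ij} e_i$, and the relations $f_i t_j - f_j t_i$ are the degree-$2$ part of the defining ideal $\mathcal{L}$ of the symmetric algebra; modulo $\mathcal{L}$ the variables $t_i$ behave like the Koszul generators $e_i$ (this is the standard comparison between $\Sym$ and the Koszul/exterior algebra on a presentation). Thus the colon ideal $\bigl((f_it_j - f_jt_i,\ \gamma) :_R (\bt)\bigr)_{[0]}$ matches, degree by degree, the elements of the Koszul complex that land in degree $r$ under multiplication by the $e_i$'s starting from $\langle \zeta_1,\dots,\zeta_s\rangle$ — which is exactly $\langle \Gamma_\bullet \cdot Z_0\rangle_r$. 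I would invoke \cite[Theorem 4.19]{boucca2019residual} or the already-quoted identity $\Kitt_1(\fa,I) = \Fitt_0(I/\fa)$ and the fact $\Kitt_0(\fa,I) = I_r(\Phi)$ (via \cite[Proposition 20.7]{EisenbudBook}) to name the outcome, together with the obvious containment $\fa \subseteq \tau$ from \autoref{F}(iv).

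The cardinality bound is then immediate: $\fa$ is generated by the $s$ elements $a_j$, and $\Kitt_0(\fa,I) = I_r(\Phi)$ is generated by the $r\times r$ minors of the $r\times s$ matrix $\Phi$, of which there are $\binomial{s}{r}$; hence $\mu(\tau) \leq s + \binomial{s}{r}$.

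\textbf{Main obstacle.} The delicate step is the second one: making the ``$t_i$ behaves like $e_i$ modulo $\mathcal{L}$'' comparison precise enough to pin down $\tau$ on the nose rather than merely up to radical. One must check that the surviving subquotient $(^\infty\E_{hor}^{0,0})_{[0]}$ is not a proper submodule of the colon ideal — i.e. that no further incoming differentials from $H^r_{\bt}(G_r)_{[0]}$ shrink it — and that the identification with $\langle\Gamma_\bullet\cdot Z_0\rangle_r$ is exact and not just an inclusion. I expect this to follow from the explicit description of the differentials $d_2\colon F_2\to F_1$ inherited from the double complex \autoref{black} together with the surjectivity of the maps $\bar\delta^f_i\otimes 1$ used in the proof of \autoref{lem}, but it is the place where the bookkeeping has to be done honestly.
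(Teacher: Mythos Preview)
Your approach has two genuine gaps that are more than bookkeeping.

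\textbf{Circularity.} You invoke ``the obvious containment $\fa\subseteq\tau$ from \autoref{F}(iv)'' to supply the $\fa$ summand. But \autoref{F}(iv) only gives $\sqrt{\fa}\subseteq\sqrt{\tau}$; the paper explicitly marks the inclusion $\fa\subseteq\tau$ as a \emph{consequence} of \autoref{taustr}. So you cannot use it as an input. Your spectral-sequence computation, as outlined, would at best yield $\Kitt_0(\fa,I)=I_r(\Phi)$ and then you have nothing left to produce $\fa$.

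\textbf{The colon identification does not hold.} You want to show that $\tau$ equals the degree-$0$ part of $\bigl((f_it_j-f_jt_i,\gamma):_S(\bt)\bigr)$ and then translate this into $\langle\Gamma_\bullet\cdot Z_0\rangle_r$. Neither step works. First, $\tau=(^{\infty}\E_{hor}^{0,0})_{[0]}$ sits inside $H^0_{\bt}(H_0(G_\bullet))_{[0]}$, which is the \emph{saturation} with respect to $(\bt)$, not the colon with exponent one; and the passage from $^2\E$ to $^\infty\E$ may cut it down further. The chain $\tau\subseteq(\text{colon})\subseteq J$ in the proof of \autoref{F}(i) can have strict inclusions. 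Second, your heuristic ``$t_i$ behaves like $e_i$ modulo $\mathcal{L}$'' compares a commutative polynomial ring to an exterior algebra; it does not give an exact dictionary, and the surjectivity of $\bar\delta^f_i\otimes 1$ you cite from \autoref{lem} is established there only under the hypothesis $I=(1)$, not in general.

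The paper takes a different route entirely. It builds a short exact sequence of complexes $0\to G'_\bullet\to G_\bullet\to {}_BG_\bullet\to 0$ by replacing the cycle modules $Z_i$ in the approximation complex with the boundary modules $B_i$, applies $H^r_{\bt}(-)_{[0]}$ (which preserves exactness here), and reads off $H_0(F_\bullet)=H_0(\mathcal{B}^+_\bullet)$ from the resulting snake-type argument. The ideal $\tau_B$ attached to $\mathcal{B}^+_\bullet$ is then computed via the DG-algebra machinery of \cite{boucca2019residual}: one gets $\tau_B=\langle\Gamma_\bullet\cdot B'_\bullet\rangle_r$, and since $\langle\Gamma_\bullet\cdot B_\bullet\rangle_r=\fa$ by \cite[Lemma 4.2]{boucca2019residual}, the extra degree-$0$ piece $R$ in $B'_\bullet$ contributes exactly $\wedge^r\Gamma_\bullet=I_r(\Phi)$. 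The point is that the comparison is made at the level of \emph{complexes}, not by computing a colon ideal directly; this is what lets the $\fa$ term appear naturally rather than having to be inserted by hand.
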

\begin{proof} 
Recall that $I=(f_1,\ldots,f_r)$ and $B_i$ is  the module of $i$-th boundaries of its Koszul complex, i.e. the image of $\delta_{i+1}^{\ff}$. The approximation $\mathcal{B}_\bullet$-complex (respectively $\mathcal{M}_\bullet$-complex), again introduced by Herzog-Simis-Vasconcelos, is defined similarly as the $\mathcal{Z}_\bullet$-complex by replacing $Z_i$ with  $B_i$ (respectively, $H_i$) in the definition for each $i\ge 1$ (respectively, $i\ge 0$). The $\mathcal{B}_\bullet$-complex (respectively, $\mathcal{M}_\bullet$-complex)  of $I$ is denoted by $\mathcal{B}_\bullet=\mathcal{B}_\bullet(f;R)$ (respectively, $\mathcal{M}_\bullet:=\mathcal{M}_\bullet(f;R)$).
We consider the following double complex 
\begin{equation}\label{black}
 \xymatrix{
         &                           &                                                                               &             & K_{r}\otimes S(-1) \ar[d] &                       \\
         &                           &                                                                               &0\ar[r]      &K_{r-1}\otimes S(-1) \ar[d] &    \\
         &                           &  0 \ar[d]                                                                     &      &\vdots \ar[d]  &                   \\
         &  0 \ar[r] \ar[d]          & K_r\otimes S(-r+2)\ar[r] \ar^{\delta_{r}^f \otimes 1}[d] &\ldots\ar[r] &K_{3}\otimes S(-1) \ar^{\delta_{3}^f \otimes 1}[d]& \\
 0 \ar[r]& K_r\otimes S(-r+1) \ar^{\bar \delta^{T}_{r}}[r]\ar@{>>}^{\bar \delta_{r}^f \otimes 1}[d] & K_{r-1}\otimes S(-r+2) \ar^(.7){\bar \delta^{T}_{r-1}}[r] \ar@{>>}^{\bar \delta_{r-1}^f \otimes 1}[d]&\ldots\ar^{\bar \delta^{T}_{3}}[r] &K_{2}\otimes S(-1)\ar@{>>}^{\bar \delta_{2}^f \otimes 1}[d]&        \\
 \textcolor{orange}{0}\ar@[yellow][r]& \textcolor{orange}{B_{r-1}\bigotimes_{R}S(-r+1)}\ar@[yellow]^{ \delta^{T}_{r-1}}[r]& \textcolor{orange}{B_{r-2}\otimes S(-r+2)} \ar@[yellow]^(.7){ \delta^{T}_{r-2}}[r] & \ldots\ar@[yellow]^{\delta^{T}_{2}}[r] & \textcolor{orange}{B_1\bigotimes_{R}S(-1)}\ar@[yellow]^{\delta^{T}_{1}}[r]&  \textcolor{orange}{R\bigotimes_{R}S}\\ 
}
\end{equation}

 Totalizing the black part and adding the composed map  as we did in \autoref{Lcomplex},  we get the following diagram with exact columns

 \begin{equation}\label{zlb}
\xymatrix{
         & 0 \ar[d] &  & 0 \ar[d] & 0 \ar[d] & \\
0 \ar[r] & K_r \otimes \bigoplus_{i=1}^{r-2} S(-i) \ar[r] \ar[d] & \ldots \ar[r] & Z_{3}\otimes_R S(-2) \oplus K_3 \otimes S(-1) \ar[r] \ar[d] & Z_{2} \otimes_R S(-1) \ar[d] \ar[rr] & & 0 \ar[d] \\
0 \ar[r] & L_{r-1} \ar[r] \ar[d] & \ldots \ar[r] & L_{2} \ar[r] \ar[d] & L_{1} \ar[d]^{\bar \delta_{2}^f \otimes 1} \ar[rr]^{\delta^{T}_{1} \circ (\bar \delta_{2}^f \otimes 1)} & & S \ar@{=}[d] \\
0 \ar[r] & B_{r-1} \otimes_R S(-r+1) \ar[r] \ar[d] & \ldots \ar[r] & B_{2} \otimes_R S(-2) \ar[r] \ar[d] & B_{1} \otimes_R S(-1) \ar[d] \ar[rr]^{\delta^{T}_{1}} & & S \\
         & 0 &  & 0 & 0 & 
}
\end{equation}
 We then tensor each row of this complex with the Koszul complex  $K(\gamma_{1},\ldots,\gamma_{s};S)$ and take the total complex of the resulting double complex for each row. This process results in the following diagram, where the columns remain exact. The structure of  $G_{\bullet}$ has been explained in \autoref{G} and those of $G'_{\bullet}$  and $_BG_{\bullet}$ are likewise. 
 \begin{equation}\label{Gzlb}
\xymatrix{
        & 0 \ar[d] &  & 0 \ar[d] & 0 \ar[d] & \\
G'_{\bullet}\ar[d]^{\alpha_{\bullet}}:0 \ar[r] & G'_{r+s-1}\ar[d]\ar[r] & \ldots \ar[r] & G'_2\ar[r]\ar[d] & G'_1 \ar[d] \ar[rr] & & 0 \ar[d] \\
G_{\bullet}\ar[d]^{\beta_{\bullet}}:0 \ar[r] & G_{r+s-1} \ar[r] \ar[d] & \ldots \ar[r] & G_{2} \ar[r] \ar[d] & G_{1} \ar[d] \ar[rr]& & S \ar@{=}[d] \\
_BG_{\bullet}:0 \ar[r] & _BG_{r+s-1}  \ar[r] \ar[d] & \ldots \ar[r] & _BG_{2} \ar[r] \ar[d] & _BG_{1}  \ar[d] \ar[rr] & & S \\
         & 0 &  & 0 & 0 & 
}
\end{equation}

The chain maps $\alpha_{\bullet}$ and $\beta_{\bullet}$ induce a chain map between double complexes $G'_{\bullet}\otimes_S C^{\bullet}_{\bt}$, $G_{\bullet}\otimes_S C^{\bullet}_{\bt}$ and $_BG_{\bullet}\otimes_S C^{\bullet}_{\bt}$.  The module $\mathcal{F}_1$ introduced in \autoref{F1} is the totalization of each of these double complex by deleting the first row. Hence the  chain maps $\alpha_{\bullet}\otimes \Id$ and $\beta_{\bullet}\otimes \Id$   induce the inclusions
 \begin{equation}
 \alpha_{\bullet}\otimes \Id(\mathcal{F}_1(G'_{\bullet}\otimes_S C^{\bullet}_{\bt}))\subseteq \mathcal{F}_1(G_{\bullet}\otimes_S C^{\bullet}_{\bt}){\text~ and~} \beta_{\bullet}\otimes \Id(\mathcal{F}_1(G_{\bullet}\otimes_S C^{\bullet}_{\bt}))\subseteq \mathcal{F}_1(_BG_{\bullet}\otimes_S C^{\bullet}_{\bt}).
 \end{equation}
These inclusions show that the blue diagram in the follwoing picture is commutative.
 \begin{equation}\label{3D}
\tdplotsetmaincoords{60}{0}  
  \begin{tikzpicture}[tdplot_main_coords]

\coordinate (A1)at (6, 0, 0);  
\coordinate (B1) at (-3,0, 0);
\coordinate (C1) at (-1, 4, 0);
\coordinate (D1) at (8, 4, 0);
\coordinate (G1) at (3, 0, 0);
\coordinate (H1) at (0, 0, 0);
\coordinate (A2) at (6, 0, 3);  
\coordinate (B2) at (-3, 0, 3);
\coordinate (C2) at (-1,4, 3);
\coordinate (D2) at (8, 4, 3);
\coordinate (G2) at (3, 0, 3);
\coordinate (H2) at (0, 0, 3);
\coordinate (A3) at (6, 0, 6);  
\coordinate (B3) at (-3,0, 6);
\coordinate (C3) at (-1, 4, 6);
\coordinate (D3) at (8, 4, 6);
\coordinate (G3) at (3, 0, 6);
\coordinate (H3) at (0, 0, 6);
\fill[black] (D1) circle (1pt);
\fill[black] (D2) circle (1pt);
\fill[black] (D3) circle (1pt);
\fill[black] (G1) circle (2pt);
\fill[black] (G2) circle (2pt);
\fill[black] (G3) circle (2pt);
\fill[black] (B1) circle (3pt);
\fill[black] (B2) circle (3pt);
\fill[black] (B3) circle (3pt);
\fill[black] (H1) circle (3pt);
\fill[black] (H2) circle (3pt);
\fill[black] (H3) circle (3pt);
\draw[thick] (A1) -- (B1) -- (C1) -- (D1) -- cycle;

\draw[thick] (A2) -- (B2) -- (C2) -- (D2) -- cycle;
\draw[thick] (A3) -- (B3) -- (C3) -- (D3) -- cycle;
\draw[->,thick, blue] (G1) -- (D1);
\draw[->,thick, blue] (G2) -- (D2);
\draw[->,thick, blue] (G3) -- (D3);
\draw[->>,thick, blue] (G2) -- (G1) node[midway, right] {$H^r_{\mathbf{t}}(\beta_r)_{[0]}$};
\draw[->,thick, blue] (G3) -- (G2) node[midway, right] {$H^r_{\mathbf{t}}(\beta_r)_{[0]}$};
\draw[ ->,thick, blue] (D2) -- (D1)node[midway, right] {=}; 
\draw[ ->,thick, blue] (D3) -- (D2);
\draw[->>, thick] (B2) -- (B1); 
\draw[->, thick] (B3) -- (B2);
\draw[->, thick] (H3) -- (H2);
\draw[ ->,thick] (H2) -- (H1);
\node[anchor=south west] at (D3) {$0$};
\node[anchor=south west] at (D2) {$R$};
\node[anchor=north west] at (D1) {$R$};
\node[anchor=north] at (G1) {$H^r_{\mathbf{t}}(_BG_r)_{[0]}$};
\node[anchor=south] at (G2) {$H^r_{\mathbf{t}}(G_r)_{[0]}$};
\node[anchor=south] at (G3) {$H^r_{\mathbf{t}}(G'_r)_{[0]}$};
\node[anchor=south east] at (B3) {$H^r_{\mathbf{t}}(G'_{r+s-1})_{[0]}$};
\node[anchor=south east] at (B2) {$H^r_{\mathbf{t}}(G_{r+s-1})_{[0]}$};
\node[anchor=north east] at (B1) {$H^r_{\mathbf{t}}(_BG_{r+s-1})_{[0]}$};
\end{tikzpicture}
  \end{equation} 
  
Moreover, since $H^{r-1}_{\mathbf{t}}(_BG_r)=0$, for each integer $i$ between $r$ and  $r+s-1$, the column $0\ra H^r_{\mathbf{t}}(G'_i)_{[0]}\ra H^r_{\mathbf{t}}(G_i)_{[0]}\ra H^r_{\mathbf{t}}(_BG_i)_{[0]}\ra 0$ is an exact sequence. Clearly, $0\ra R\ra R\ra 0$ is also exact. Hence the Snake lemma implies that the zeroth homology of the middle complex is the same as the zeroth homology of the complex in the bottom.

The complex in the bottom is analogous to the $\mathcal{Z}^+_{\bullet}$-complex introduced in \cite{boucca2019residual}.  
By replacing  $\mathcal{Z}_\bullet$-complex with 	 $\mathcal{B}_\bullet$-complex in the construction of $\mathcal{Z}_\bullet^+$ and following the same procedure as in the construction of $\mathcal{Z}^+_\bullet$ in  \cite{boucca2019residual}, we obtain a ``boundary" residual approximation complex denoted by $\mathcal{B}^+_\bullet$. $\mathcal{B}^+_\bullet$ is the complex $H^r_{\mathbf{t}}(_BG_{\bullet})_{[0]}$ augmented with the lowest diagonal map in \autoref{3D}. 
Set $\tau_{\mathcal{B}}$ the image of $\partial^{\mathcal{B}^+_\bullet}_1:\mathcal{B}_1^+\rightarrow \mathcal{B}^+_0=R$.   The equality $H_0(F)=H_0(\mathcal{B}^+_\bullet)$ implies that $\tau=\tau_B$. 

To determine the structure of $\tau_B$, we consider the Koszul DG-algebra $K_\bullet(\ff;R)= R\langle e_1,\dots,e_r\:;\:\partial(e_i)=f_i\rangle$  and the DG-ideal $B_{\bullet}=\oplus_{i=0}^{r-2} B_i$. We extend  $B_{\bullet}$ to the algebra $B'_{\bullet}=R\oplus_{i=1}^{r-2} B_i$. Adopting the structure of the Kitt-ideal for the complex $\mathcal{B}^+_{\bullet}$, \cite[Theorem 4.9]{boucca2019residual} becomes 
$$\tau_B=\langle\Gamma_\bullet \cdot B'_\bullet\rangle _r.$$
On the other hand,  \cite[Lemma 4.2]{boucca2019residual} shows that $\fa=\langle\Gamma_\bullet \cdot B_\bullet\rangle _r.$ Therefore, 
$$\tau=\tau_B=\fa+ R\cdot\wedge^r \Gamma_\bullet=\fa+I_r(\Phi)=\fa+\Kitt_0(\fa,I)$$
as desired.
\end{proof}
As an immediate corollary of \autoref{taustr} and \autoref{radical}, we have
\begin{Corollary} Let $R$ be a Noetherian ring of dimension
$d$ and $I=(f_1,\ldots,f_r)$ contains a regular element. Let $J=(a_1,\cdots,a_s):I$ with  $\Ht(J)=\alpha$. Suppose that $s\geq r$ and  $I$ is $r$-minimally generated from height $\alpha-1$. Then the arithmetic rank of $J$ is at most $s+\binomial{s}{r}$.
\end{Corollary}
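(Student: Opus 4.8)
The plan is to deduce the bound directly from the two preceding results: the radical identity $\sqrt{\tau}=\sqrt{J}$ of \autoref{radical} and the explicit generating set of $\tau$ furnished by \autoref{taustr}. Recall that the arithmetic rank $\operatorname{ara}(J)$ — the least integer $n$ for which there exist $g_1,\ldots,g_n\in R$ with $\sqrt{(g_1,\ldots,g_n)}=\sqrt{J}$ — depends only on $\sqrt{J}$, and that $\operatorname{ara}(\mathfrak{b})\le\mu(\mathfrak{b})$ for every ideal $\mathfrak{b}$. So it suffices to exhibit an ideal with the same radical as $J$ that is generated by at most $s+\binomial{s}{r}$ elements.

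First I would invoke \autoref{radical}: since $I$ contains a regular element, $s\ge r$, and $I$ is $r$-minimally generated from height $\alpha-1=\Ht(J)-1$, the ideal $\tau=\im(d_1)$ attached to the complex $F_\bullet$ of \autoref{F} satisfies $\sqrt{\tau}=\sqrt{J}$. Next, by \autoref{taustr}, $\tau=\fa+\Kitt_0(\fa,I)=\fa+I_r(\Phi)$, where $\Phi=[c_{ij}]$ is the $r\times s$ matrix with $[\mathbf{a}]=[\mathbf{f}]\cdot\Phi$. Hence $\tau$ is generated by the $s$ elements $a_1,\ldots,a_s$ together with the maximal ($r\times r$) minors of $\Phi$, of which there are $\binomial{s}{r}$, giving $\mu(\tau)\le s+\binomial{s}{r}$ — precisely the estimate already recorded in the statement of \autoref{taustr}.

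Combining these, $\operatorname{ara}(J)=\operatorname{ara}(\sqrt{J})=\operatorname{ara}(\sqrt{\tau})\le\operatorname{ara}(\tau)\le\mu(\tau)\le s+\binomial{s}{r}$, as claimed. There is essentially no obstacle once \autoref{radical} and \autoref{taustr} are in hand; the only point worth noting is the count of generators of $\Kitt_0(\fa,I)=I_r(\Phi)$, namely that an $r\times s$ matrix has $\binomial{s}{r}$ maximal minors, which is immediate. In fact, since $\mu(\tau)\le s+\binomial{s}{r}$ is part of the conclusion of \autoref{taustr}, the corollary reduces to the one-line observation that $\operatorname{ara}(J)\le\mu(\tau)$ via $\sqrt{\tau}=\sqrt{J}$.
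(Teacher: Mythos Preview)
Your proposal is correct and follows exactly the route the paper intends: the corollary is stated as an immediate consequence of \autoref{taustr} and \autoref{radical}, and you have spelled out precisely that one-line deduction, namely that $\sqrt{\tau}=\sqrt{J}$ together with $\mu(\tau)\le s+\binom{s}{r}$ gives $\operatorname{ara}(J)\le\mu(\tau)\le s+\binom{s}{r}$.
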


\vspace{.7cm}
{\bf Acknowledgement.}  The author would like to thank Ehsan Tavanfar for reading the first manuscript of this paper and providing valuable suggestions for improving its presentation. His observations, particularly regarding the content of \autoref{Pcohdim} and \autoref{taustructure}, significantly enhanced the clarity and quality of the work. He also extends his gratitude to the referee for their careful reading of the paper.

\bibliographystyle{alpha}
\bibliography{References}

\begin{thebibliography}{DANT02}

\bibitem[AN72]{artin1972residual}
Michael Artin and Masayoshi Nagata.
\newblock Residual intersections in cohen-macaulay rings.
\newblock {\em Journal of Mathematics of Kyoto University}, 12(2):307--323,
  1972.

\bibitem[BH98]{bruns1998cohen}
Winfried Bruns and J\"{u}rgen Herzog.
\newblock {\em Cohen-{M}acaulay rings}, volume~39 of {\em Cambridge Studies in
  Advanced Mathematics}.
\newblock Cambridge University Press, Cambridge, 1998.

\bibitem[BH19]{boucca2019residual}
Vinicius Bou{\c{c}}a and S~Hamid Hassanzadeh.
\newblock Residual intersections are koszul--fitting ideals.
\newblock {\em Compositio Mathematica}, 155(11):2150--2179, 2019.

\bibitem[BS90]{BrunsSchwanzl}
Winfried Bruns and Roland Schw\"anzl.
\newblock The number of equations defining a determinantal variety.
\newblock {\em Bull. London Math. Soc.}, 22(5):439--445, 1990.

\bibitem[BS13]{localCohomology}
M.~P. Brodmann and R.~Y. Sharp.
\newblock {\em Local cohomology}, volume 136 of {\em Cambridge Studies in
  Advanced Mathematics}.
\newblock Cambridge University Press, Cambridge, second edition, 2013.
\newblock An algebraic introduction with geometric applications.

\bibitem[CEU01]{ceu}
Marc Chardin, David Eisenbud, and Bernd Ulrich.
\newblock Hilbert functions, residual intersections, and residually {${\rm
  S}_2$} ideals.
\newblock {\em Compositio Math.}, 125(2):193--219, 2001.

\bibitem[CEU15]{CHU}
Marc Chardin, David Eisenbud, and Bernd Ulrich.
\newblock Hilbert series of residual intersections.
\newblock {\em Compos. Math.}, 151(9):1663--1687, 2015.

\bibitem[Cha64]{Ch}
M.~Chasles.
\newblock Construction des coniques qui satisfont a cinque conditions.
\newblock {\em C. R. Acad. Sci. Paris}, 58:297--308, 1864.

\bibitem[CN78]{CowiskNori}
R.~C. Cowsik and M.~V. Nori.
\newblock Affine curves in characteristic p are set theoretic complete
  intersections.
\newblock {\em Invent. Math.}, 45:111--114, 1978.

\bibitem[CNT19]{ChardinNaeliton}
Marc Chardin, Jos\'{e} Na\'{e}liton, and Quang~Hoa Tran.
\newblock Cohen-{M}acaulayness and canonical module of residual intersections.
\newblock {\em Trans. Amer. Math. Soc.}, 372(3):1601--1630, 2019.

\bibitem[DANT02]{DivTousi}
K.~Divaani-Aazar, R.~Naghipour, and M.~Tousi.
\newblock Cohomological dimension of certain algebraic varieties.
\newblock {\em Proc. Amer. Math. Soc.}, 130(12):3537--3544, 2002.

\bibitem[Eis95]{EisenbudBook}
David Eisenbud.
\newblock {\em Commutative algebra}, volume 150 of {\em Graduate Texts in
  Mathematics}.
\newblock Springer-Verlag, New York, 1995.
\newblock With a view toward algebraic geometry.

\bibitem[Eis18]{EisT}
D.~Eisenbud.
\newblock An unexpected property of some residual intersections:banff
  international research center, new trends in syzygies, 2018.

\bibitem[EMS00]{EisMustStillman}
David Eisenbud, Mircea Mustata, and Mike Stillman.
\newblock Cohomology on toric varieties and local cohomology with monomial
  supports.
\newblock {\em J. Symbolic Comput.}, 29(4-5):583--600, 2000.
\newblock Symbolic computation in algebra, analysis, and geometry (Berkeley,
  CA, 1998).

\bibitem[Fer79]{Ferrand}
Daniel Ferrand.
\newblock Set theoretical complete intersections in characteristic p>0.
\newblock Algebraic geometry, {Proc}. {Summer} {Meet}., {Copenh}. 1978, {Lect}.
  {Notes} {Math}. 732, 82-89, 1979.

\bibitem[Fox77]{FoxbyII}
Hans-Bjorn Foxby.
\newblock On the {{\(\mu^i\)}} in a minimal injective resolution. {II}.
\newblock {\em Math. Scand.}, 41:19--44, 1977.

\bibitem[Har79]{Hartshornecharp}
Robin Hartshorne.
\newblock Complete intersections in characteristic {{\(p>0\)}}.
\newblock {\em Am. J. Math.}, 101:380--383, 1979.

\bibitem[Har83]{Hartshorne}
Robin Hartshorne.
\newblock {\em Algebraic geometry. {Corr}. 3rd printing}, volume~52 of {\em
  Grad. Texts Math.}
\newblock Springer, Cham, 1983.

\bibitem[Has12]{hassanzadeh2012cohen}
Seyed~Hamid Hassanzadeh.
\newblock Cohen-macaulay residual intersections and their castelnuovo-mumford
  regularity.
\newblock {\em Transactions of the American Mathematical Society},
  364(12):6371--6394, 2012.

\bibitem[HL90]{LyubHuneke}
C.~Huneke and G.~Lyubeznik.
\newblock On the vanishing of local cohomology modules.
\newblock {\em Invent. Math.}, 102(1):73--93, 1990.

\bibitem[HN16]{HamidNaeliton}
Seyed~Hamid Hassanzadeh and Jose Na\'{e}liton.
\newblock Residual intersections and the annihilator of {K}oszul homologies.
\newblock {\em Algebra Number Theory}, 10(4):737--770, 2016.

\bibitem[Hoc75]{HochsterTopics}
Melvin Hochster.
\newblock {\em Topics in the homological theory of modules over commutative
  rings}, volume~24 of {\em Reg. Conf. Ser. Math.}
\newblock American Mathematical Society (AMS), Providence, RI, 1975.

\bibitem[HSV83]{Approximation}
J.~Herzog, A.~Simis, and W.~V. Vasconcelos.
\newblock Koszul homology and blowing-up rings.
\newblock In {\em Commutative algebra ({T}rento, 1981)}, Heidelberger
  Taschenb\"ucher, pages 79--169. Dekker, New York, 1983.

\bibitem[HSV84]{Onthearithmetic}
J.~Herzog, A.~Simis, and W.~V. Vasconcelos.
\newblock On the arithmetic and homology of algebras of linear type.
\newblock {\em Trans. Amer. Math. Soc.}, 283(2):661--683, 1984.

\bibitem[HU88]{huneke1988residual}
Craig Huneke and Bernd Ulrich.
\newblock Residual intersections.
\newblock {\em J. Reine Angew. Math.}, 390:1--20, 1988.

\bibitem[Hun83]{huneke1983strongly}
Craig Huneke.
\newblock Strongly cohen-macaulay schemes and residual intersections.
\newblock {\em Transactions of the American Mathematical Society},
  277(2):739--763, 1983.

\bibitem[HV24]{hassanzadeh2024deformation}
S.~Hamid Hassanzadeh and Kevin Vasconcellos.
\newblock Deformation of residual intersections, 2024.

\bibitem[HVV85]{Slidingdepth}
J.~Herzog, W.~V. Vasconcelos, and R.~Villarreal.
\newblock Ideals with sliding depth.
\newblock {\em Nagoya Math. J.}, 99:159--172, 1985.

\bibitem[Kle80]{K}
Steven~L. Kleiman.
\newblock Chasles's enumerative theory of conics: a historical introduction.
\newblock In {\em Studies in algebraic geometry}, volume~20 of {\em MAA Stud.
  Math.}, pages 117--138. Math. Assoc. America, Washington, DC, 1980.

\bibitem[Mat89]{matsumura1989commutative}
Hideyuki Matsumura.
\newblock {\em Commutative ring theory}, volume~8 of {\em Cambridge Studies in
  Advanced Mathematics}.
\newblock Cambridge University Press, Cambridge, second edition, 1989.
\newblock Translated from the Japanese by M. Reid.

\bibitem[MT09]{MuraiTerai}
Satoshi Murai and Naoki Terai.
\newblock {$h$}-vectors of simplicial complexes with {S}erre's conditions.
\newblock {\em Math. Res. Lett.}, 16(6):1015--1028, 2009.

\bibitem[PS73]{finiteprojectivepeskine}
C.~Peskine and L.~Szpiro.
\newblock Dimension projective finie et cohomologie locale. {A}pplications
  \`a{} la d\'emonstration de conjectures de {M}. {A}uslander, {H}. {B}ass et
  {A}. {G}rothendieck.
\newblock {\em Inst. Hautes \'Etudes Sci. Publ. Math.}, 42:47--119, 1973.

\bibitem[PS74]{PS}
Christian Peskine and Lucien Szpiro.
\newblock Liaison des vari{\'e}t{\'e}s alg{\'e}briques. i.
\newblock {\em Inventiones mathematicae}, 26:271--302, 1974.

\bibitem[Rob87]{RobertsIntersection}
Paul Roberts.
\newblock The intersection theorem.
\newblock {\em C. R. Acad. Sci., Paris, S{\'e}r. I}, 304:177--180, 1987.

\bibitem[RV83]{RobbianoValla}
Lorenzo Robbiano and Giuseppe Valla.
\newblock Some curves in {${\bf P}\sp{3}$} are set-theoretic complete
  intersections.
\newblock In {\em Algebraic geometry---open problems ({R}avello, 1982)}, volume
  997 of {\em Lecture Notes in Math.}, pages 391--399. Springer, Berlin-New
  York, 1983.

\bibitem[SW05]{SinghWalter}
Anurag~K. Singh and Uli Walther.
\newblock On the arithmetic rank of certain {S}egre products.
\newblock In {\em Commutative algebra and algebraic geometry}, volume 390 of
  {\em Contemp. Math.}, pages 147--155. Amer. Math. Soc., Providence, RI, 2005.

\bibitem[Tho93]{Thoma}
Apostolos Thoma.
\newblock On the set-theoretic complete intersection problem for monomial
  curves in $\mathbb{A}^{n}$ and $\mathbb{P}^{n}$.
\newblock {\em Max-Planck-Institut fur Mathematik}, 1993.

\bibitem[Ulr94]{UlrichArtin}
Bernd Ulrich.
\newblock Artin-{N}agata properties and reductions of ideals.
\newblock In {\em Commutative algebra: syzygies, multiplicities, and birational
  algebra ({S}outh {H}adley, {MA}, 1992)}, volume 159 of {\em Contemp. Math.},
  pages 373--400. Amer. Math. Soc., Providence, RI, 1994.

\bibitem[Vas94]{vasconcelos1994arithmetic}
Wolmer~V Vasconcelos.
\newblock {\em Arithmetic of blowup algebras}, volume 195.
\newblock Cambridge University Press, 1994.

\bibitem[Vas24]{KevinThesis}
Kevin Vasconcellos.
\newblock Deformation of kitt ideals.
\newblock {\em Thesis, Universidade Federal Rio de Janeiro}, 2024.

\bibitem[Wal99]{WalterAlgorithm}
Uli Walther.
\newblock Algorithmic computation of local cohomology modules and the local
  cohomological dimension of algebraic varieties.
\newblock {\em J. Pure Appl. Algebra}, 139(1-3):303--321, 1999.
\newblock Effective methods in algebraic geometry (Saint-Malo, 1998).

\end{thebibliography}

\end{document}